\newcommand{\heute}{28 April 2010}
\theoremstyle{plain}
\newtheorem{theorem}{Theorem}[section]
\newtheorem{lemma}[theorem]{Lemma}
\newtheorem{corollary}[theorem]{Corollary}
\newtheorem{proposition}[theorem]{Proposition}
\newtheorem{conjecture}[theorem]{Conjecture}
\theoremstyle{remark}
\newtheorem{remark}[theorem]{Remark}
\newtheorem{hypothesis}[theorem]{Hypothesis}
\newtheorem*{defn}{Definition}
\newtheorem*{rk}{Remark}
\newtheorem*{bsp}{Example}
\newtheorem*{notn}{Notation}
\newcommand{\enref}[1]{\textup{(\ref{enum:#1})}}
\newcommand{\dashTwo}[1]{\textup{(\ref{two}${}'$)}}
\newcommand{\mytag}[1]{\refstepcounter{theorem}\label{#1}\tag{\thetheorem}}
\newcommand{\ignore}[1]{}
\newcommand{\f}[1][p]{\mathbb{F}_{#1}}
\newcommand{\Gro}[1]{Gr\"ob\-ner}
\DeclareMathOperator{\End}{End}
\DeclareMathOperator{\rank}{rank}
\newcommand{\GL}{\mathit{GL}}
\newcommand{\xx}{\mathfrak{X}}
\newcommand{\yy}{\mathcal{Y}}
\newcommand{\abs}[1]{\left|#1\right|}
\newcommand{\scr}[1]{\mathscr{#1}}
\newcommand{\partref}[1]{(\ref{#1})}
\begin{document}

\title[Strong form of Oliver's conjecture]{On a strong form of Oliver's
$p$-group conjecture}
\author[D.~J. Green]{David J. Green}
\address{Department of Mathematics \\
Friedrich-Schiller-Universit\"at Jena \\ 07737 Jena \\ Germany}
\email{david.green@uni-jena.de}
\author[L.~H\'ethelyi]{L\'aszl\'o H\'ethelyi}
\address{Department of Algebra \\
Budapest University of Technology and Economics \\ Budapest \\ Hungary}
\thanks{H\'ethelyi supported by the Hungarian Scientific Research Fund
OTKA, grant 77-476}
\author[N.~Mazza]{Nadia Mazza}
\address{Department of Mathematics and Statistics \\ Fylde College \\
Lancaster University \\ Lancaster LA1 4YF \\ United Kingdom}
\keywords{$p$-group; offending subgroup; quadratic offender;
$p$-local finite group}
\subjclass[2000]{Primary 20D15}
\date{\heute}

\begin{abstract}
We introduce a strong form of Oliver's $p$-group conjecture and derive a
reformulation in terms of the modular representation theory of a quotient
group. The Sylow $p$-subgroups of the symmetric group $S_n$ and of the
general linear group $\GL_n(\f[q])$ satisfy both the strong conjecture and
its reformulation.
\end{abstract}

\maketitle

\section{Introduction}
\label{intro}
\noindent
Bob Oliver proved the Martino--Priddy Conjecture, which states that
the $p$-local fusion system of a finite group~$G$ uniquely determines
the $p$-completion of its classifying
space~$BG$~\cite{Oliver:MartinoPriddyOdd,Oliver:MartinoPriddyEven}\@.
Finite groups are not however the only source of fusion systems: the $p$-blocks
of modular representation theory have fusion systems too, and there are
other exotic examples as well. An open question in the theory of
$p$-local finite groups claims that every fusion system has a
unique $p$-completed classifying space --
see~\cite{BLO:survey} for a survey article on this field.

In~\cite{Oliver:MartinoPriddyOdd}, Oliver introduced the characteristic
subgroup $\xx(S)$ of a finite $p$-group~$S$\@. For odd primes he showed that
unique existence of the classifying space would follow from the
following conjecture.
Recall that $J(S)$ denotes the Thompson subgroup of~$S$ generated by all
elementary
abelian subgroups of greatest rank.

\begin{conjecture}[Oliver; Conjecture~3.9 of \cite{Oliver:MartinoPriddyOdd}]
\label{conj:Oliver}
Let $p$ be an odd prime and $S$ a finite $p$-group. Then $J(S) \leq \xx(S)$.
\end{conjecture}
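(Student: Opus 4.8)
The plan is to prove the contrapositive by a minimal-counterexample argument that converts the failure of $J(S)\le\xx(S)$ into the existence of a quadratic offender on a module canonically attached to $\xx(S)$, and then to contradict the maximality that is built into the definition of $\xx(S)$. The first step is to invoke Oliver's reduction of the conjecture to a purely representation-theoretic question — essentially the reformulation this paper makes precise — so that everything is phrased in terms of a single module for the $p$-group quotient $S/\xx(S)$.

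Recall that $\xx(S)$ is the largest characteristic subgroup $Q\trianglelefteq S$ admitting a chain
\[
1=Q_0\le Q_1\le\cdots\le Q_n=Q
\]
of subgroups characteristic in $S$ with $[\Omega_1(C_S(Q_{i-1})),Q_i;\,p-1]=1$ for every $i$, where $[X,Y;k]$ denotes the $k$-fold commutator. Writing $Q=\xx(S)$ and $\bar S=S/Q$, each layer of this chain, together with $\Omega_1(Z(Q))$, is centralised by $Q$ and so becomes an $\mathbb{F}_p\bar S$-module; I would assemble these into a single $\mathbb{F}_p\bar S$-module $V$. Via Oliver's reduction the containment $J(S)\le\xx(S)$ becomes equivalent to the assertion that $V$ carries no nontrivial quadratic offender.

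Granting this reduction, suppose $J(S)\not\le Q$. Then some elementary abelian $A\le S$ of maximal rank has $\bar A=AQ/Q\ne1$, and the maximal-rank hypothesis forces $\bar A$, after a Thompson replacement, to be an offender on $V$ whose action is quadratic, i.e. $[V,\bar A,\bar A]=0$ with $|\bar A|\,|C_V(\bar A)|\ge|V|$. The goal is then to feed such an offender back into the group: the quadratic relation should allow one to adjoin a strictly larger characteristic subgroup $Q'>Q$ to the defining chain while preserving the $(p-1)$-fold commutator condition, contradicting the maximality of $\xx(S)=Q$. For $p=3$ this transition is immediate because $p-1=2$ coincides with the quadratic degree, which is exactly why that case is far more tractable.

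The decisive obstacle is the module statement for $p\ge5$: one must rule out quadratic offenders on $V$ while bridging the gap between the $2$-step (quadratic) vanishing one is handed and the $(p-1)$-step vanishing one needs. This cannot be achieved by soft arguments, because the acting group $\bar S=S/\xx(S)$ is itself a $p$-group and general $\mathbb{F}_p$-modules for $p$-groups do possess quadratic offenders; the constraint must come entirely from the special provenance of $V$ from the Oliver series. I would therefore pass to a minimal counterexample in which $V$ is taken as small as possible — reducing, through the socle series and the fact that $\xx(S)$ absorbs $C_S(\Omega_1 Z(\xx(S)))$, to a nearly irreducible faithful quadratic $\mathbb{F}_p\bar S$-module — and attempt to derive numerical or structural incompatibilities, such as dimension bounds for quadratic modules or restrictions on the possible $\bar S$. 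Isolating precisely which extra structure of $V$ forbids quadratic offenders in general is the heart of the matter and the step I expect to resist, and it is exactly the phenomenon that the strong form introduced here is designed to capture.
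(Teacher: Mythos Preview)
This statement is Oliver's conjecture, and it is \emph{open}: the paper does not prove it, so there is no proof to compare your proposal against. The paper merely restates the conjecture, strengthens it to Conjecture~\ref{conj:Y1}, records the module-theoretic reformulation (Conjecture~\ref{conj:GHL}), and then verifies these conjectures only for particular families of $p$-groups (Sylow subgroups of $S_n$ and $\GL_n(\f[q])$, powerful groups, etc.). Your proposal is accordingly not a proof but a strategy sketch, and you yourself flag the decisive step as one you ``expect to resist''.

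Your outline of the reduction is broadly correct and matches what the paper records: with $Q=\xx(S)$, $G=S/Q$ and $V=\Omega_1(Z(Q))$, the maximality of $Q$ guarantees that no $1\neq g\in\Omega_1(Z(G))$ has $(g-1)^{p-1}=0$ on $V$, and Conjecture~\ref{conj:Oliver} becomes the assertion that $V$ is then not an $F$-module. But your claim that the case $p=3$ is ``immediate'' is wrong, and the error exposes the real difficulty. You propose to use a quadratic offender $\bar A$ to extend the $Q$-series and contradict maximality of $Q$; however, $\bar A$ is not central in $G$, so its preimage $AQ$ is not normal in $S$, and taking the normal closure destroys the quadratic relation. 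What one actually needs, already for $p=3$, is precisely Conjecture~\ref{conj:Y2}: from an arbitrary quadratic offender, manufacture a quadratic element in $\Omega_1(Z(G))$. At $p=3$ there is indeed no gap between $2$ and $p-1$, but the gap between ``some quadratic offender'' and ``a \emph{central} quadratic element'' is the whole problem, and it is open for $p=3$ just as for $p\ge 5$.
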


\noindent
In~\cite{oliver}, the first two authors and Lilienthal obtained the following reformulation
of Oliver's conjecture.

\begin{conjecture}[Conjecture~1.3 of \cite{oliver}]
\label{conj:GHL}
Let $p$ be an odd prime and $G$ a finite $p$-group. If the faithful $\f G$ module $V$ is an
$F$-module, then there is an element $1 \neq g \in \Omega_1(Z(G))$ such that the minimal
polynomial of the action of $g$~on $V$ divides $(X-1)^{p-1}$.
\end{conjecture}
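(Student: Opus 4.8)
The plan is to establish the equivalence of Conjectures~\ref{conj:Oliver} and~\ref{conj:GHL} that underlies the word \emph{reformulation}; since we may assume Oliver's conjecture, the substantive task is the forward implication, and I would treat the converse separately. The key device is the semidirect product. Given a finite $p$-group $G$ and a faithful $\f G$-module $V$ that is an $F$-module, I regard $V$ as an elementary abelian normal subgroup and form $S = V \rtimes G$. I would then read off $\xx(S)$ directly from Oliver's definition, whose admissible chains $1 = Q_0 \le \cdots \le Q_n = \xx(S)$ of $S$-normal subgroups satisfy $[\Omega_1(C_S(Q_{i-1})), Q_i; p-1] = 1$ at each step.

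First I would record two elementary facts about $S = V \rtimes G$. Since $V$ is elementary abelian and normal, $[\Omega_1(S), V, V] \le [V,V] = 1$, so the one-term chain $1 \le V$ is admissible and hence $V \le \xx(S)$. Faithfulness of $V$ gives $C_G(V) = 1$, so $C_S(V) = V$ and $\Omega_1(C_S(V)) = V$. Next I would translate the $F$-module hypothesis into a statement about $J(S)$: an offending $1 \neq A \le G$, satisfying $|A|\cdot|C_V(A)| \ge |V|$, produces an elementary abelian $E = C_V(A) \times A \le S$ with $E \not\le V$ and $\operatorname{rank}(E) \ge \operatorname{rank}(V)$. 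If one had $J(S) \le V$, then $V$ would realise the maximal rank of $S$, forcing $E$ to be of maximal rank and hence $E \le J(S) \le V$, a contradiction; therefore $J(S) \not\le V$.

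Applying Conjecture~\ref{conj:Oliver} now gives $J(S) \le \xx(S)$, so $\xx(S)$ properly contains $V$. Invoking Oliver's structural description of $\xx(S)$, the defining chain may be taken to pass through $V$, and its next term $Q_2$ satisfies $V < Q_2$ and $[\Omega_1(C_S(V)), Q_2; p-1] = [V, Q_2; p-1] = 1$. Writing $\bar Q = Q_2 V / V \neq 1$ inside $G$, this is precisely the module-theoretic condition $[V, \bar Q; p-1] = 0$, since $V$ is abelian and $S$-normal. Choosing a nontrivial $g \in \Omega_1(\bar Q \cap Z(G))$ — which exists because a nontrivial normal subgroup of a $p$-group meets the centre — I would conclude $(g-1)^{p-1}V \subseteq [V, \bar Q; p-1] = 0$, so $g \in \Omega_1(Z(G))$ has minimal polynomial dividing $(X-1)^{p-1}$, as required.

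I expect the main obstacle to lie not in this forward implication but in the converse needed to justify calling Conjecture~\ref{conj:GHL} a reformulation: starting from a group $S$ with $J(S) \not\le \xx(S)$, one must manufacture a faithful $F$-module without a good central element. Here one would take $V = \Omega_1(Z(\xx(S)))$ and $G = S/C_S(V)$, and the delicate inputs are Oliver's self-centralising property $C_S(\xx(S)) = Z(\xx(S))$ and the translation of $J(S) \not\le \xx(S)$ into the existence of a genuine quadratic offender making $V$ an $F$-module; the maximality of $\xx(S)$ then rules out any central element with minimal polynomial dividing $(X-1)^{p-1}$, exactly reversing the chain-extension argument above.
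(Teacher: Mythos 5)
There is a fundamental problem here that precedes any question of technical detail: the statement you are asked to prove is Conjecture~\ref{conj:GHL}, which is an \emph{open conjecture}. The paper does not prove it and does not claim to; it only records (citing Theorem~1.2 of~\cite{oliver}) that Conjecture~\ref{conj:GHL} for~$G$ is \emph{equivalent} to Conjecture~\ref{conj:Oliver} holding for all $S$ with $S/\xx(S)\cong G$, and then proves special cases (Sylow subgroups of $S_n$ and $\GL_n(\f[q])$, metabelian groups, class $\le 4$, powerful groups, \dots). Your proposal opens with ``since we may assume Oliver's conjecture'' --- but you may not. Conjecture~\ref{conj:Oliver} is precisely as open as the statement you are trying to establish; indeed, by the very equivalence you go on to sketch, assuming it is assuming the conclusion. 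So the argument is circular as a proof of the conjecture, and no unconditional proof exists in the literature or in this paper.

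What you have actually written is a reasonable sketch of the \emph{equivalence} argument of \cite[Theorem~1.2]{oliver}: forming $S=V\rtimes G$, showing $V\le\xx(S)$ and $C_S(V)=V$, translating an offender into $J(S)\nleq V$, and extracting from a $Q$-series past~$V$ a central element $g$ with $[V,g;p-1]=0$, i.e.\ minimal polynomial dividing $(X-1)^{p-1}$. The individual steps there are essentially sound (modulo small points: the chain $1\le V$ satisfies $[\Omega_1(S),V;p-1]=1$ because $[\Omega_1(S),V]\le V$ and $[V,V]=1$, which needs $p\ge 3$; and one must invoke the analogue of Lemma~\ref{lemma:Yportmanteau}(1) to arrange the $Q$-series to pass through~$V$). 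But this establishes a conditional implication, not the statement. If the intended deliverable were the equivalence theorem, your sketch would be on the right track; as a proof of Conjecture~\ref{conj:GHL} itself, it has an unfillable gap, since filling it would amount to resolving Oliver's conjecture.
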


\noindent
Recall that $V$ is by definition an $F$-module if there is an \emph{offender},
i.e.\@, an elementary abelian subgroup $1 \neq E \leq G$ such that $\dim(V) - \dim(V^E) \leq
\rank(E)$.
Theorem~1.2 of~\cite{oliver} states that Conjecture~\ref{conj:Oliver} holds for every $p$-group~$S$
with $S / \xx(S) \cong G$ if and only if Conjecture~\ref{conj:GHL} holds for~$G$. Note
that by \cite[Lemma~2.3]{oliver}, every $p$-group $G$ does occur as some
$S / \xx(S)$.

In this paper we modify Oliver's construction of $\xx(S)$ slightly to obtain
a new characteristic subgroup $\yy(S)$, with $\yy(S) \leq \xx(S)$
for $p$~odd and $\yy(S) = S$ for $p=2$: see~\S\ref{section:Y}\@.
We therefore propose the following strengthening of Oliver's conjecture:

\begin{conjecture}
\label{conj:Y1}
Let $p$ be a prime and $S$~a finite $p$-group. Then $J(S) \leq \yy(S)$.
\end{conjecture}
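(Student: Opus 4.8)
The plan is to imitate the two-stage reduction that turned Oliver's Conjecture~\ref{conj:Oliver} into the representation-theoretic Conjecture~\ref{conj:GHL}. Before anything else, observe that the prime $p = 2$ is disposed of immediately: by construction $\yy(S) = S$, so $J(S) \leq S = \yy(S)$ holds trivially. Hence only odd $p$ is at issue, and there the inclusion $\yy(S) \leq \xx(S)$ shows that Conjecture~\ref{conj:Y1} is genuinely stronger than Oliver's; it therefore cannot be deduced formally from the latter, and the whole reduction must be redone with $\yy$ in place of $\xx$.

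First I would establish the analogue of Theorem~1.2 of~\cite{oliver}: a translation of the inclusion $J(S) \leq \yy(S)$, ranging over all $S$ with $S/\yy(S) \cong G$, into a single condition on the modular representation theory of the quotient $G$. Reading off the precise condition from the modified construction of~\S\ref{section:Y} is itself part of the task; I expect it to single out elementary abelian $1 \neq E \leq G$ acting quadratically on the faithful module $V$, i.e.\ with $[V,E,E] = 0$, and to demand in the offending case a central element $1 \neq g \in \Omega_1(Z(G))$ whose minimal polynomial on $V$ divides $(X-1)^{p-1}$. Making the two sides of this equivalence match --- identifying a maximal-rank elementary abelian subgroup of $S$ not lying in $\yy(S)$ with the offending configuration on $V$ --- is the technical core.

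The main obstacle is that the conjecture, being strictly stronger than the still-open Oliver conjecture, admits no uniform proof; the real content is to verify the reformulated condition for concrete families. For the Sylow $p$-subgroups of $S_n$ and of $\GL_n(\f[q])$ I would use their explicit iterated-wreath-product and unipotent descriptions to build $V$ and analyse its quadratic offenders by induction along the natural decomposition, exhibiting the required central element in each case. The delicate point will be controlling how offenders behave under that decomposition, since offenders of the whole group need not descend from offenders of the pieces, and each newly produced offender must still be shown compatible with the centrality requirement.
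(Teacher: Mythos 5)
This statement is a \emph{conjecture}: the paper offers no general proof, only the trivial case $p=2$ (where indeed $\yy(S)=S$) and verifications for particular families ($S$ a Sylow $p$-subgroup of $S_n$ or of $\GL_n(\f[q])$). Your proposal correctly recognises this, and your overall architecture --- reduce to a module-theoretic statement about $G=S/\yy(S)$, then verify that statement for concrete families --- mirrors Theorem~\ref{thm:Y12} and Sections \ref{section:hypothesis}--\ref{section:wreath}. But your guess at the reformulated condition is wrong in the one place where it matters. You predict the conclusion ``there is $1\neq g\in\Omega_1(Z(G))$ whose minimal polynomial divides $(X-1)^{p-1}$''; that is the reformulation of \emph{Oliver's} conjecture via $\xx(S)$ (Conjecture~\ref{conj:GHL}). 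The $Y$-series is built from the condition $[\Omega_1(C_S(Y_{i-1})),Y_i;2]=1$, so the correct reformulation (Conjecture~\ref{conj:Y2}) demands a \emph{quadratic} central element, minimal polynomial exactly $(X-1)^2$. Since the entire point of passing from $\xx$ to $\yy$ is this strengthening from exponent $p-1$ to exponent $2$, carrying out your plan with the $(X-1)^{p-1}$ condition would reprove only the old equivalence, not the new one. (Relatedly, an offender is defined by the rank inequality $\dim V-\dim V^E\leq\rank E$, not by $[V,E,E]=0$; quadratic offenders are obtained from offenders via Timmesfeld replacement, they are not the definition.)

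For the named families you also miss that the paper's verification of Conjecture~\ref{conj:Y1} for $S=P$ does not go through the representation-theoretic reformulation at all. Theorem~\ref{thm:Y12} converts Conjecture~\ref{conj:Y1} for all $S$ with $S/\yy(S)\cong G$ into Conjecture~\ref{conj:Y2} for $G$; so verifying the module condition for $G=P$ says nothing directly about $J(P)\leq\yy(P)$ unless you also identify $P/\yy(P)$. Instead the paper proves $J(P)\leq\yy(P)$ by the much softer observation that $J(P)$ is an abelian normal subgroup of~$P$ (Huppert for the symmetric groups; Lemma~\ref{lemma:Jwreath} and the upper-unitriangular generation argument for $\GL_n$), combined with Lemma~\ref{lemma:Yportmanteau}\,\partref{enum:abnormY} that every abelian normal subgroup lies in $\yy(P)$. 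The wreath-product induction you sketch is what the paper uses for the \emph{separate} statement that Conjecture~\ref{conj:Y2} holds with $G=P$, and there the delicate point is not tracking offenders down the decomposition but propagating Hypothesis~\ref{hypo:experiment} (a statement about weakly closed quadratic subgroups of $H\times P$ for arbitrary $H$) through $P\mapsto P\wr C_p$.
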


\noindent
This is indeed a strengthening: if $p=2$ then $\yy(S) = S$ and so Conjecture~\ref{conj:Y1}
is true, whereas Oliver's conjecture is false in general; and
for $p \geq 3$ we have $\yy(S) \leq \xx(S)$.
We recently learnt that Justin Lynd raises the same question in his
paper~\cite{Lynd:2subnormal}\@; note that his $\xx_3(S)$ is our $\yy(S)$.

This conjecture admits a module-theoretic reformulation as well. An element $g \in G$ is
said to be \emph{quadratic} on the $\f G$-module $V$ if its action has minimal
polynomial $(X-1)^2$. Note that if~$V$ is faithful then quadratic elements must have order~$p$.

\begin{conjecture}
\label{conj:Y2}
Let $p$~be a prime and $G$ a finite $p$-group.
If the faithful $\f G$-module $V$ is an $F$-module,
then there are quadratic elements in $\Omega_1(Z(G))$.
\end{conjecture}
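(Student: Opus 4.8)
The plan is to deduce a central quadratic element from the existence of a single \emph{quadratic offender}, and then to force quadraticity down into $\Omega_1(Z(G))$ by induction on $|G|$. Throughout, write $Z = \Omega_1(Z(G))$. Since $V$ is faithful and $Z$ is central, every nonidentity element of $Z$ acts unipotently with minimal polynomial $(X-1)^d$ for some $2 \le d \le p$; the whole task is to produce one element with $d = 2$.

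First I would upgrade the offender to a quadratic one. Because $V$ is an $F$-module it admits an offender $E$, and by Timmesfeld's replacement theorem one may pass to a nontrivial elementary abelian $A \le G$ which is still offending but acts \emph{quadratically}, i.e.\ $[V,A,A]=0$; concretely, any offender contains a best offender, and best offenders are quadratic. Quadraticity of the whole subgroup means $(a-1)^2$ annihilates $V$ for every $a \in A$, so each nonidentity $a \in A$ is itself a quadratic element. Thus an $F$-module always supplies an abundance of quadratic elements, and the entire content of the statement is that at least one of them can be taken inside $Z$.

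The descent step is an induction on $|G|$. If $A \cap Z \neq 1$ then any nonidentity element of $A \cap Z$ is simultaneously central and quadratic, and we are done. Otherwise I would try to shrink the group: for a non-quadratic $z \in Z$ one can form the submodule $C_V(z)$ and the quotient $V/C_V(z)$, both $G$-modules, and attempt to locate a section on which a conjugate of $A$ persists as a quadratic offender while the centre grows to meet it. The two properties that must survive any such reduction are faithfulness and the $F$-module property, since only then does the inductive hypothesis apply; checking that both persist is the technical heart of the argument.

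The main obstacle is exactly this descent when $A$ avoids $Z$. Quadraticity is a condition on the \emph{given} faithful module $V$, so one cannot simply replace $V$ by $(z-1)^{d-2}V$ to cut down the Jordan-block size of a central $z$: that submodule need no longer be faithful, and quadraticity there is not quadraticity on $V$. Instead one must convert the quadratic action of the non-central subgroup $A$ into a genuine bound on the Jordan blocks of some element of $Z$, and it is precisely here that the finer filtration underlying $\yy(S)$ — peeling off central elements of quadratic type — is expected to be indispensable. I would therefore expect the bridge from a non-central quadratic offender to a central quadratic element to be the step demanding the full strength of the $\yy(S)$ construction, and to be where the structure of $G$ genuinely enters.
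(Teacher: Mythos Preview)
This statement is Conjecture~\ref{conj:Y2} in the paper and is \emph{not} proved there; it is presented as an open strengthening of Oliver's conjecture. The paper establishes it only in special cases (Sylow subgroups of $S_n$ and of $\GL_n(\f[q])$, powerful groups, groups with $\yy(G)=G$ or $\Omega_1(Z(\yy(G)))=\Omega_1(Z(G))$), so there is no ``paper's own proof'' of the full statement to compare your proposal against.

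Your proposal is not a proof either, and you are candid about this. The reduction via Timmesfeld's replacement theorem to a quadratic offender $A$ is correct and is exactly the starting point the paper uses in every special case. But your inductive step --- passing to $C_V(z)$ or $V/C_V(z)$ for a non-quadratic central $z$ and hoping that both faithfulness and the $F$-module property survive --- is precisely the step you yourself flag as unresolved. You note correctly that quadraticity on a section of $V$ says nothing about quadraticity on $V$, and then say only that you ``expect'' the bridge from a non-central quadratic offender to a central quadratic element to require the $\yy$ machinery. No such bridge is supplied. As written, this is an outline of where the difficulty lies, not an argument that overcomes it.

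For orientation, the paper's partial results do not proceed by the module-theoretic descent you sketch. Theorem~\ref{thm:ZY} works on the group side: Lemmas~\ref{lemma:prestrange} and~\ref{lemma:strange} show that late quadratic elements always lie in $\Omega_1(Z(\yy(G)))$, so the conjecture follows whenever $\Omega_1(Z(\yy(G))) = \Omega_1(Z(G))$. The wreath-product results use a separate inductive hypothesis (Hypothesis~\ref{hypo:experiment}) that tracks weakly closed quadratic subgroups across direct factors rather than filtering $V$. Neither approach attempts to pass to sections of the module, and no general argument along your lines is known.
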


\noindent
Observe that Conjecture~\ref{conj:Y2} is a strengthening of
Conjecture~\ref{conj:GHL}, and trivially true for $p=2$\@.
Our first actual result is the equivalence of the two new conjectures:

\begin{theorem}
\label{thm:Y12}
Let $p$ be an odd prime and $G$ a finite $p$-group.
Then Conjecture~\ref{conj:Y1} holds
for every finite $p$-group $S$ with $S / \yy(S) \cong G$ if and only if
Conjecture~\ref{conj:Y2} holds for every faithful $\f G$-module~$V$.
\end{theorem}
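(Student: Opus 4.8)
The plan is to transcribe the proof of Theorem~1.2 of~\cite{oliver}, keeping track of the single place where $\yy(S)$ differs from $\xx(S)$: in the chain of characteristic subgroups defining the subgroup, Oliver's degree-$(p-1)$ commutator condition is replaced by the quadratic (degree-$2$) condition. The key observation is that these two conditions are exactly the group-theoretic shadows of the two module conditions appearing in Conjectures~\ref{conj:GHL} and~\ref{conj:Y2}\@. Indeed, if $W$ is an abelian layer $Q_j/Q_{j-1}$ of the defining chain and $g$ is the image of an element acting on it, then, regarding $g-1$ as an operator on $W$, the relation $[W,g,g]=1$ is equivalent to $(g-1)^2=0$, i.e.\ to $g$ acting quadratically or trivially, just as Oliver's condition corresponds to minimal polynomial dividing $(X-1)^{p-1}$. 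Thus the maximality built into the definition of $\yy(S)$ says, in the language of the quotient $G=S/\yy(S)$ acting on the faithful $\f G$-module attached to $\yy(S)$, precisely that $\Omega_1(Z(G))$ contains no nontrivial quadratic element; the theorem is then the two-way reading of this dictionary.

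I would prove the implication ``Conjecture~\ref{conj:Y2} for $G$ $\Rightarrow$ Conjecture~\ref{conj:Y1} for all $S$ with $S/\yy(S)\cong G$'' directly. Take such an $S$ and suppose $J(S)\not\le\yy(S)$, so some elementary abelian subgroup of maximal rank lies outside $\yy(S)$; its image is an offender on the faithful $\f G$-module $V$ attached to $\yy(S)$, making $V$ an $F$-module, whence Conjecture~\ref{conj:Y2} supplies a nontrivial quadratic $g\in\Omega_1(Z(G))$. By the dictionary above, $g$ lets one append a further characteristic layer to the defining chain of $\yy(S)$, enlarging it and contradicting its maximality; hence $J(S)\le\yy(S)$. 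For the converse I would argue contrapositively and reuse the construction underlying \cite[Lemma~2.3]{oliver}\@: given a faithful $F$-module $V$ of $G$ with no nontrivial quadratic element in $\Omega_1(Z(G))$, build a $p$-group $S$ with $S/\yy(S)\cong G$ realizing $V$, so that the absence of quadratic central elements forces $\yy(S)$ to be exactly the realized normal subgroup while the $F$-module structure produces an offender of maximal rank outside it; then $J(S)\not\le\yy(S)$, contradicting Conjecture~\ref{conj:Y1} for this particular $S$.

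The main obstacle is making the quadratic condition line up on both sides rather than merely copying~\cite{oliver}\@. Since for $p\ge3$ every quadratic element is in particular one of Oliver's elements with minimal polynomial dividing $(X-1)^{p-1}$, the subgroup $\yy(S)$ satisfies a \emph{stronger} chain condition than $\xx(S)$, and I must check that its maximality is equivalent to the failure of the quadratic condition and nothing weaker. Concretely, the delicate points are: verifying that the realization of \cite[Lemma~2.3]{oliver} can be arranged so that the prescribed $V$ is cut out by $\yy(S)$ and not merely by $\xx(S)$; confirming that the layer-by-layer translation really yields the quadratic relation $[W,g,g]=1$ at each step; and ensuring that the central element produced by Conjecture~\ref{conj:Y2} has order~$p$ and lies in $\Omega_1(Z(G))$, which is where the hypothesis that $V$ is faithful is used. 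Once this bookkeeping is in place the two implications close up as above.
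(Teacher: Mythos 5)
Your proposal takes essentially the same route as the paper: the published proof consists precisely of the observation that the argument for \cite[Theorem~1.2]{oliver} adapts to the present setting once the basic properties of $Y$-series and $\yy(S)$ (Lemma~\ref{lemma:Yportmanteau}) are in place, with Oliver's degree-$(p-1)$ condition replaced throughout by the quadratic condition, exactly as you describe. The delicate points you flag are the ones the paper handles separately --- in particular the re-verification of \cite[Lemma~2.3]{oliver} for $\yy$ in place of $\xx$ appears as the remark at the end of \S\ref{section:Y}.
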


\begin{proof}
The proof of \cite[Theorem~1.2]{oliver} adapts easily to the present case, given the
results on $Y$-series and on $\yy(G)$ in Lemma~\ref{lemma:Yportmanteau}\@.
\end{proof}

\noindent
Note that Conjecture~\ref{conj:Y2} holds if $G$ is metabelian or of (nilpotence) class
at most four: for though \cite[Theorem 1.2]{newol} only purports to verify the weaker
Conjecture~\ref{conj:GHL}, the proof that is given there only uses the assumptions of
Conjecture~\ref{conj:Y2}\@.

Our main result is the following:

\begin{theorem}
\label{thm:ol3main}
For $n \geq 1$ and a prime $p$, let $P$ be a Sylow $p$-subgroup of the
symmetric group~$S_n$.
Then Conjecture~\ref{conj:Y1} holds with $S = P$, and Conjecture~\ref{conj:Y2}
holds with $G = P$.
\end{theorem}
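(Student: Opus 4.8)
The plan is to establish the module-theoretic Conjecture~\ref{conj:Y2} for $G = P$ as the substantial step, and then to deduce the group-theoretic Conjecture~\ref{conj:Y1} for $S = P$. The prime $p = 2$ is immediate: Conjecture~\ref{conj:Y2} is trivially true, and Conjecture~\ref{conj:Y1} holds because $\yy(S) = S$ for $p = 2$ (see~\S\ref{section:Y}); so assume $p$ odd. By the classical structure theory of Sylow subgroups of symmetric groups (Kaloujnine), $P$ is a direct product $\prod_i W_{k_i}$ of iterated wreath products $W_k = C_p \wr \cdots \wr C_p$ ($k$ factors), where $W_k$ is a Sylow $p$-subgroup of $S_{p^k}$ and $W_k \cong W_{k-1} \wr C_p$ with base group $B = W_{k-1}^{\,p}$. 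A direct induction shows $Z(W_k) = \langle z\rangle \cong C_p$, generated by the diagonal element $z = (z_0,\dots,z_0)$ with $z_0$ a generator of $Z(W_{k-1})$; hence $\Omega_1(Z(W_k)) = \langle z\rangle$, and $z^i$ is quadratic if and only if $z$ is. After a standard reduction of the offender condition to indecomposable factors it suffices to treat $P = W_k$, so Conjecture~\ref{conj:Y2} becomes the assertion that on every faithful $\f{W_k}$-module $V$ which is an $F$-module the central generator satisfies $(z-1)^2 = 0$.

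For $k = 1$ this is a direct calculation: if $V$ is a faithful $\f{C_p}$-module with Jordan blocks of sizes $d_1, d_2, \dots$ (each $\le p$), then $\dim V - \dim V^{C_p} = \sum_j (d_j - 1)$, and the offender inequality $\dim V - \dim V^{C_p} \le \rank(C_p) = 1$ forces exactly one block of size $2$ and all others trivial, so $z$ is quadratic. For the inductive step I would argue by contraposition: assuming $(z-1)^2 \ne 0$, I show that every nontrivial elementary abelian $E \le W_k$ violates the offender inequality, i.e.\ $\dim V - \dim V^E > \rank(E)$, so that $V$ is not an $F$-module. The estimate is organised around the base group $B = W_{k-1}^{\,p}$, distinguishing the cases $E \le B$ and $E \not\le B$ (where $EB = W_k$ and $E \cap B$ has index $p$ in $E$); in each case one bounds $\dim V^E$ and $\rank(E)$ in terms of the $p$ coordinate copies of $W_{k-1}$ and invokes the inductive hypothesis on $W_{k-1}$, the point being that the failure of quadraticity propagates through the base in a way the offender cannot compensate for. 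It is instructive that the tensor-induced module $U^{\otimes p}$ built from a faithful $F$-module $U$ of $W_{k-1}$ carries $z$ as $z_0^{\otimes p}$, which is non-quadratic for $p \ge 3$ even when $z_0$ is quadratic on $U$; the content of the statement is precisely that such modules fail to be $F$-modules.

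For Conjecture~\ref{conj:Y1} with $S = P$, Theorem~\ref{thm:Y12} reduces the claim to Conjecture~\ref{conj:Y2} for the quotient $G = P/\yy(P)$. Using the description of $\yy$ in~\S\ref{section:Y} together with Lemma~\ref{lemma:Yportmanteau}, one identifies $\yy(P)$ explicitly from the wreath structure---it contains the lower base layers---and checks that $P/\yy(P)$ again falls into the class of groups to which the wreath-product induction of the previous paragraph applies, whence Conjecture~\ref{conj:Y2} holds for these quotients by the same argument. Alternatively one may verify $J(P) \le \yy(P)$ directly: the Thompson subgroup $J(P)$ is generated by the bottom-layer base groups, which are elementary abelian of the maximal rank $p^{k-1}$ in each factor, and one checks from its definition that these lie in $\yy(P)$.

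The main obstacle is the inductive step of Conjecture~\ref{conj:Y2}. Because $B = W_{k-1}^{\,p}$ is a $p$-group, its only simple $\f B$-module is the trivial one and there is no Clifford decomposition of $V|_B$ into distinct isotypic components to exploit; the argument must therefore control the commuting nilpotent operators $z_0^{(1)}, \dots, z_0^{(p)}$ and their product $z = z_0^{(1)} \cdots z_0^{(p)}$ directly. Concretely, the difficulty is to prove the uniform fixed-point estimate $\dim V - \dim V^E > \rank(E)$ for all elementary abelian $E \le W_k$ under the hypothesis that $z$ is non-quadratic; this amounts to a tight understanding of how offenders in the iterated wreath product distribute across the base coordinates, and it is here that the structural input on $W_k$ and the inductive hypothesis must be combined most carefully.
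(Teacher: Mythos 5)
There are two genuine gaps. The first is the claim that ``a standard reduction of the offender condition to indecomposable factors'' lets you assume $P$ is a single iterated wreath product $W_k$. Conjecture~\ref{conj:Y2} does not pass to direct factors in any routine way: a faithful module for $G_1\times G_2$ need not split compatibly, and an offender can be spread across the factors. This is precisely why the paper does \emph{not} make that reduction; instead it formulates Hypothesis~\ref{hypo:experiment} as a statement about $G=H\times P$ with an arbitrary auxiliary factor $H$, proves that this stronger inductive statement is preserved under $P\mapsto P\wr C_p$ (Proposition~\ref{prop:exper3}), and only then assembles the direct product via Lemma~\ref{lemma:exper2}. Your reduction, as stated, has no justification and conceals exactly the difficulty the $H$-factor is designed to absorb.

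The second and more serious gap is that the heart of the proof --- the inductive step for $W_k=W_{k-1}\wr C_p$ --- is not carried out; you describe the desired estimate $\dim V-\dim V^E>\rank(E)$ and then, candidly, identify it as the unresolved obstacle. The paper's route is genuinely different and does not rest on dimension counting at all: by Timmesfeld replacement and Lemma~\ref{lemma:MfS} one may replace an arbitrary offender by a \emph{weakly closed} quadratic one (Proposition~\ref{prop:jammed}); weak closure forces the ``support'' $T(F)$ of the offender in the base $W_{k-1}^{\,p}$ to meet every cyclic shift of itself; and the orthogonality calculus of \S\ref{section:orthog} (in particular Lemma~\ref{lemma:orthogCent}) then shows the coordinates $z_1,\dots,z_p$ of the diagonal central element are pairwise orthogonal quadratics, whence $z=z_1\cdots z_p$ is quadratic by Corollary~\ref{coroll:quadOrthog}. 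Your own tensor-induction example shows why a bare fixed-point estimate is unlikely to succeed: the failure mode to be excluded is an offender concentrated in a single base coordinate, and it is weak closure --- absent from your outline --- that rules this out. By contrast, your treatment of Conjecture~\ref{conj:Y1} is essentially sound in its second (``alternative'') form and coincides with the paper's: $J(P)$ is abelian and normal (Huppert, Satz III.15.4a), applied factorwise), hence $J(P)\le\yy(P)$ by Lemma~\ref{lemma:Yportmanteau}\,\partref{enum:abnormY}; the first route via Theorem~\ref{thm:Y12} would additionally require computing $P/\yy(P)$ and verifying Conjecture~\ref{conj:Y2} for that quotient, which you do not do and which is harder than the direct argument.
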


\begin{proof}
For $p=2$ there is nothing to prove, so assume that $p \geq 3$. It is well known
(see \cite[III.15]{Huppert:I})
that $P$ is a direct product $P = \prod_{i=1}^m P_{r_i}$,
where $r_i \geq 1$ for all~$i$, and $P_r$ is the iterated wreath product
$C_p \wr C_p \wr \cdots \wr C_p$ with $r$ factors~$C_p$.
Recall that the Sylow $p$-subgroups of~$S_{p^r}$ are isomorphic to $P_r$.

As $P_1 \cong C_p$ is abelian, and $P_r$ satisfies Hypothesis~\ref{hypo:experiment} for $r \geq 2$ by Corollary~\ref{coroll:exper}, Lemma~\ref{lemma:exper2} tells us that $G = P$ satisfies Conjecture~\ref{conj:Y2}\@.

For Conjecture~\ref{conj:Y1}, observe that
$J(G_1 \times G_2) = J(G_1) \times J(G_2)$ for any groups $G_1$,~$G_2$.
Huppert shows in \cite[Satz III.15.4a)]{Huppert:I} that
$J(P_r)$ is an abelian normal subgroup of~$P_r$. Hence $J(P)$ is abelian and normal in~$P$. So $J(P) \leq \yy(P)$ by Lemma~\ref{lemma:Yportmanteau}~\partref{enum:abnormY}\@.
\end{proof}

\noindent
The fact that they provide the proper degree of generality for the proof of
\cite[Theorem 1.2]{newol} was not the only reason for introducing
$\yy(S)$ and its companion Conjecture~\ref{conj:Y2}\@. 

\begin{theorem}
\label{thm:ZY}
Let $p$ be a prime and $G$ a finite $p$-group.
Suppose that $G$ satisfies one of the following conditions:
\begin{enumerate}
\item
\label{enum:ZY1}
$G$ is generated by its abelian normal subgroups; or more generally
\item
\label{enum:ZY2}
$\yy(G) = G$; or more generally
\item
\label{enum:ZY3}
$\Omega_1(Z(\yy(G))) = \Omega_1(Z(G))$.
\end{enumerate}
Then Conjecture~\ref{conj:Y2} holds for~$G$.
\end{theorem}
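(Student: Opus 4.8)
The three hypotheses are listed in increasing generality, so the plan is to reduce to the third. If $G$ is generated by its abelian normal subgroups, then since each such subgroup lies in $\yy(G)$ by Lemma~\ref{lemma:Yportmanteau}~\partref{enum:abnormY}, the abelian normal subgroups already generate $\yy(G)$, whence $\yy(G)=G$; thus \enref{ZY1} implies \enref{ZY2}. And \enref{ZY2} implies \enref{ZY3} trivially, because $\yy(G)=G$ forces $\Omega_1(Z(\yy(G)))=\Omega_1(Z(G))$. It therefore suffices to prove Conjecture~\ref{conj:Y2} for~$G$ under hypothesis \enref{ZY3}.

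The starting observation, which I would isolate as the conceptual core, is that a suitably placed quadratic offender immediately produces a central quadratic element. Since $V$ is an $F$-module it admits an offender, and by the theory of quadratic offenders (Timmesfeld's replacement theorem) one may pass to a \emph{quadratic} offender: a nontrivial elementary abelian $A\le G$ with $[V,A,A]=0$ and $\dim(V)-\dim(V^A)\le\rank(A)$. Every $1\ne a\in A$ then satisfies $[V,a,a]\le[V,A,A]=0$ and, $V$ being faithful, acts nontrivially, so $a$ has minimal polynomial $(X-1)^2$, i.e.\ $a$ is quadratic. Consequently, if $A$ can be chosen normal in $\yy(G)$, the theorem follows at once: a nontrivial normal subgroup of the $p$-group $\yy(G)$ meets its centre, so $1\ne A\cap Z(\yy(G))\le\Omega_1(Z(\yy(G)))$, and every nontrivial element of this intersection is quadratic; by \enref{ZY3} we have $\Omega_1(Z(\yy(G)))=\Omega_1(Z(G))$, so we have found a quadratic element of $\Omega_1(Z(G))$, exactly as required. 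Note that in cases \enref{ZY1}--\enref{ZY2}, where $\yy(G)=G$, ``normal in $\yy(G)$'' means normal in~$G$, and a normal abelian offender automatically lies in $\yy(G)$ by Lemma~\ref{lemma:Yportmanteau}~\partref{enum:abnormY}.

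The remaining and genuinely hard step is to produce a quadratic offender that is normal in $\yy(G)$, and this is where I expect the main obstacle to lie. The strategy is to begin with an arbitrary quadratic best offender and to replace it, via the replacement theorems of Thompson and Glauberman in combination with the hypothesis, by one contained in and normalised by $\yy(G)$: the hypothesis supplies an ample stock of abelian normal subgroups, all housed inside $\yy(G)$, and the replacement machinery should push the offender into such a subgroup. The delicate point—and the reason the bare existence of a quadratic offender does not suffice—is that quadraticity and the offending inequality must be preserved \emph{simultaneously} under replacement, and that the resulting subgroup must be genuinely invariant rather than merely contained in a normal abelian subgroup; it is precisely the structural input of \enref{ZY3} (through \enref{ZY1}, \enref{ZY2}) that forces this invariance and thereby pins the quadratic element into the central socle $\Omega_1(Z(\yy(G)))=\Omega_1(Z(G))$.
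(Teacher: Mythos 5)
Your reduction \enref{ZY1}${}\Rightarrow{}$\enref{ZY2}${}\Rightarrow{}$\enref{ZY3} is exactly right and matches the paper. But the main body of your argument has a genuine gap that you yourself flag: you never actually produce a quadratic offender that is normal in $\yy(G)$, and nothing in the hypotheses forces one to exist. Timmesfeld's replacement theorem gives you a quadratic offender with no control whatsoever over its position relative to the normal structure of $G$; quadratic offenders can be very far from normal (the paper's own examples in \S\ref{section:jammed} exhibit weakly closed, hence non-normal, elementary abelians in Sylow subgroups of symmetric groups, and the whole point of \S\ref{section:jammed} is to cope with non-normal offenders). Condition \enref{ZY3} is a statement about centres, not about where offenders sit, so "the replacement machinery should push the offender into such a subgroup" is not a step anyone knows how to carry out --- if it were, Conjecture~\ref{conj:Y2} itself would essentially follow. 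Your proposal therefore reduces the theorem to a claim that is at least as hard as the theorem and possibly false.

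The paper's actual mechanism is element-wise rather than subgroup-wise, and this is the idea you are missing. From Timmesfeld one only extracts the \emph{existence of quadratic elements}; one then chooses a quadratic element $g$ lying as deep as possible in the lower central series (a ``late''/``last'' quadratic, Lemma~\ref{lemma:prestrange}), so that all of its (locally) deepest commutators are non-quadratic. Lemma~\ref{lemma:strange} then shows by induction along a $Y$-series $1=Y_0\leq\cdots\leq Y_n=\yy(G)$ that $g$ centralizes each $Y_r$: if not, $g$ has a $(G,Y_r)$-locally deepest commutator $y=[g,x]$ with $x\in Y_r$, the $Y$-series condition $[\Omega_1(C_G(Y_{r-1})),Y_r;2]=1$ gives $[y,x]=1$, and Lemma~\ref{lemma:GHL} forces $y$ to be quadratic --- contradicting lateness. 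Since $\yy(G)$ is centric and $g$ has order $p$, this puts $g$ in $\Omega_1(Z(\yy(G)))=\Omega_1(Z(G))$. So the ``invariance'' you hoped to extract from an offender is instead obtained from a single well-chosen element via the defining property of the $Y$-series; no normal offender is needed or constructed.
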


\begin{proof}
Clearly \enref{ZY2}${}\Rightarrow{}$\enref{ZY3}\@. For \enref{ZY1}${}\Rightarrow{}$\enref{ZY2}, note from Lemma~\ref{lemma:Yportmanteau}\,\enref{abnormY} that every abelian normal subgroup lies in~$\yy(G)$.

Now suppose that $V$ is a faithful $F$-module.
Timmesfeld's replacement theorem in the version given in \cite[Theorem~4.1]{newol}
states that there are quadratic elements in~$G$. Hence by
Lemma~\ref{lemma:prestrange} there are quadratic elements which are \emph{late}
in the sense of~\S\ref{section:late}\@. But Lemma~\ref{lemma:strange} says that if a
quadratic element is late then it lies in $\Omega_1(Z(\yy(G)))$.
So if \enref{ZY3} holds then there are quadratic
elements in $\Omega_1(Z(G))$.
\end{proof}

\noindent
For $p \geq 5$, this result has been obtained independently by
Justin Lynd: see the remark after~\cite[Lemma~8]{Lynd:2subnormal}\@.
One application of this result is the following:

\begin{theorem}
\label{thm:GLn}
Let $n \geq 1$, let $p$~be a prime, and let $q$ be any prime power.
Let~$P$ be a Sylow $p$-subgroup of the general linear group
$\GL_n(\f[q])$. Then Conjecture~\ref{conj:Y1} holds for $S = P$,
and Conjecture~\ref{conj:Y2} holds for $G = P$.
\end{theorem}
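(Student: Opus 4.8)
The plan is to split into the defining-characteristic case $p \mid q$ and the cross-characteristic case $p \nmid q$, after disposing of $p=2$ (where both conjectures hold trivially, as already noted). So assume $p$ is odd; since the two conjectures depend only on the isomorphism type of~$P$, I am free to replace $P$ by any convenient Sylow $p$-subgroup in each case.

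First I would treat $p \mid q$, where the upper unitriangular group $U = U_n(\f[q])$ is a Sylow $p$-subgroup (its order $q^{n(n-1)/2}$ is the full $p$-part of $\abs{\GL_n(\f[q])}$). Here the natural tool is Theorem~\ref{thm:ZY}, and I claim $U$ satisfies its hypothesis~\enref{ZY1}: that $U$ is generated by its abelian normal subgroups. For $1 \le a < b \le n$ let $R_{a,b}$ be the \emph{corner} subgroup of those unitriangular matrices whose non-trivial off-diagonal entries occur only in rows~$\le a$ and columns~$\ge b$. Since conjugation in $U$ can move a root $(i,j)$ only to a root $(k,j)$ with $k<i$ or to a root $(i,l)$ with $l>j$, the set defining $R_{a,b}$ is closed under conjugation, so $R_{a,b}$ is normal; and it is abelian because every row index occurring is $\le a < b \le$ every column index, so no two of its roots are composable. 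As the root group $U_{ij}$ lies in $R_{i,j}$ and the root groups generate~$U$, the corner subgroups generate~$U$. Theorem~\ref{thm:ZY}\,\enref{ZY1} then yields Conjecture~\ref{conj:Y2} for $G=U$, and the implication \enref{ZY1}${}\Rightarrow{}$\enref{ZY2} in its proof gives $\yy(U)=U$, so that $J(U) \le U = \yy(U)$ is Conjecture~\ref{conj:Y1}.

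For $p \nmid q$ I would invoke the structure theory of cross-characteristic Sylow subgroups (Weir; see also \cite[III.15]{Huppert:I}). Let $d$ be the multiplicative order of $q$ modulo~$p$ and let $p^c$ be the $p$-part of $q^d-1$, which is the order of the Sylow $p$-subgroup of the Singer cycle $\f[q^d]^{\times} \hookrightarrow \GL_d(\f[q])$. Expanding $\lfloor n/d\rfloor$ in base~$p$ gives $P \cong \prod_k (C_{p^c} \wr P_k)^{a_k}$, a direct product of iterated wreath products with bottom factor~$C_{p^c}$. When $c=1$ this is a product of groups $C_p \wr P_k = P_{k+1}$, i.e.\ a Sylow $p$-subgroup of a symmetric group, and Theorem~\ref{thm:ol3main} applies directly. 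For general~$c$ I would run the same two arguments as in Theorem~\ref{thm:ol3main}: for Conjecture~\ref{conj:Y2} the abelian factors $C_{p^c}=C_{p^c}\wr P_0$ are harmless, one shows that the factors $W_k = C_{p^c}\wr P_k$ with $k\ge1$ satisfy Hypothesis~\ref{hypo:experiment}, and Lemma~\ref{lemma:exper2} then delivers the conjecture for the product; for Conjecture~\ref{conj:Y1} one uses $J(G_1\times G_2)=J(G_1)\times J(G_2)$ to reduce to showing that each $J(W_k)$ is abelian and normal, after which Lemma~\ref{lemma:Yportmanteau}\,\partref{enum:abnormY} gives $J(P)\le\yy(P)$.

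The two facts about $W_k = C_{p^c}\wr P_k$ just invoked are exactly what must be established for $c\ge2$, and together they form the main obstacle. For $c=1$ they are Corollary~\ref{coroll:exper} and \cite[Satz~III.15.4a)]{Huppert:I}, but for larger~$c$ the base group grows from $(C_p)^{p^k}$ to $(C_{p^c})^{p^k}$, so I would need a version of Corollary~\ref{coroll:exper} proved directly for $W_k$, together with a computation of the Thompson subgroup of $W_k$. I expect the verification of Hypothesis~\ref{hypo:experiment} to be the harder part, since it governs the module theory underlying Conjecture~\ref{conj:Y2}; the Thompson-subgroup claim, by contrast, is a self-contained computation inside the finite group $W_k$ for which Huppert's analysis of the case $c=1$ provides a template, the point being to check that enlarging the base (whose $p$-torsion $\Omega_1(C_{p^c})\cong C_p$ still matches the symmetric-group factor $P_{k+1}$) does not create maximal-rank elementary abelian subgroups lying outside an abelian normal subgroup.
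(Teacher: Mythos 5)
Your defining-characteristic argument coincides with the paper's: your corner subgroups $R_{a,b}$ are precisely the paper's $N_{i,j}$, and both proofs then invoke Theorem~\ref{thm:ZY}\,\enref{ZY1}. In coprime characteristic you also follow the paper's route --- Weir's decomposition of $P$ into a direct product of iterated wreath products $C_{p^c}\wr C_p\wr\cdots\wr C_p$, then Hypothesis~\ref{hypo:experiment} plus Lemma~\ref{lemma:exper2} for Conjecture~\ref{conj:Y2}, and abelianness of the Thompson subgroup plus Lemma~\ref{lemma:Yportmanteau}\,\enref{abnormY} for Conjecture~\ref{conj:Y1}. But you stop short of the two verifications you yourself identify as the crux when $c\geq 2$, calling them ``the main obstacle'' and proposing to develop new analogues of Corollary~\ref{coroll:exper} and of Huppert's computation. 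As written, that is a genuine gap: the proof is not complete for $c\geq 2$.

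The gap is, however, already closed by results stated in the paper, and the real oversight is not noticing this. For the hypothesis: Corollary~\ref{coroll:exper1}\,\enref{exper1-1} is proved for $C_{p^r}\wr C_p$ with \emph{arbitrary} $r\geq 1$, not only $r=1$ --- its proof uses only that the base $C_{p^r}^{\,p}$ is an abelian normal subgroup and that the centre of the wreath product is cyclic --- and Proposition~\ref{prop:exper3} then propagates Hypothesis~\ref{hypo:experiment} through each further application of $\wr\,C_p$. So every nonabelian factor $C_{p^c}\wr C_p\wr\cdots\wr C_p$ satisfies the hypothesis with no new work, and Lemma~\ref{lemma:exper2} finishes Conjecture~\ref{conj:Y2}. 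For the Thompson subgroup: Lemma~\ref{lemma:Jwreath} states that if $J(P)$ is elementary abelian then so is $J(P\wr C_p)$; since $J(C_{p^c})=\Omega_1(C_{p^c})\cong C_p$ is elementary abelian, induction on the number of wreathing steps shows $J$ of each factor is elementary abelian (and $J$ is characteristic, hence normal), so $J(P)\leq\yy(P)$ follows as you indicate. No separate computation for the enlarged base $(C_{p^c})^{p^k}$ is required.
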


\begin{proof}
First we consider the case where $q$ is a power of~$p$
(defining characteristic).
If we can show that $P$~is generated by its abelian normal subgroups then we
are done: for this is condition~\enref{ZY1} of Theorem~\ref{thm:ZY},
which implies both condition~\enref{ZY2} and the conclusion of that theorem.
Now, we may choose $P$~to be the group of upper triangular matrices with ones
on the diagonal. This copy of~$P$ is generated by the collection of
abelian normal subgroups $N_{i,j}$ for $1 \leq i < j \leq n$, where
\[
N_{i,j} = \{A \in P \mid \text{$A_{aa} = 1$; $A_{ab} = 0$ for $a \neq b$
whenever $a > i$ or $b < j$}\} \, .
\]
See also~\cite{Weir:GeneralLinear}\@.

Now we turn to the case where $(p,q)=1$ (coprime characteristic)\@.
It was first shown by Weir~\cite{Weir:Classical} that $P$ is a direct
product of iterated wreath products $C_{p^r} \wr C_p \wr \cdots \wr C_p$,
where the $C_{p^r}$ is the ``innermost'' factor in the wreath product.
These iterated wreath products satisfy Hypothesis~\ref{hypo:experiment} by
Lemma~\ref{lemma:exper1}\,\enref{exper1-1} and Proposition~\ref{prop:exper3}.
The one exception is of course $C_{p^r}$, which is abelian. So any direct
product of these groups satisfies Conjecture~\ref{conj:Y2} by
Lemma~\ref{lemma:exper2}\@.

We now turn to Conjecture~\ref{conj:Y1} in coprime characteristic.
As
$\yy(P)=P$ if $p=2$, we may assume that $p$~is odd. We shall show that
$J(P)$ is abelian, from which $J(P) \leq \yy(P)$ follows by
Lemma~\ref{lemma:Yportmanteau}\,\enref{abnormY}\@. Since $J(G \times H) =
J(G) \times J(H)$, we may assume that $P$ is one iterated wreath product.
Applying Lemma~\ref{lemma:Jwreath}, we see that $J(P)$ is elementary abelian
by induction on the number of iterations.
\end{proof}

\noindent
The method of Theorem~\ref{thm:ZY} can also be used to show the
following:

\begin{theorem}
\label{thm:noGenQuad}
Let $p$~be a prime, $G$ a finite $p$-group and $V$ a faithful $\f G$-module
with no
quadratic elements in $\Omega_1(Z(G))$. Then the subgroup generated by all
quadratic elements of~$G$ is a proper subgroup of~$G$.
\end{theorem}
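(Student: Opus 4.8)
The plan is to argue by contradiction, following the pattern of the proof of Theorem~\ref{thm:ZY}. Suppose that the quadratic elements of $G$ generate the whole of~$G$. We may assume $G \neq 1$, so there is at least one quadratic element; since $V$ is faithful, every quadratic element has order~$p$. Applying Lemma~\ref{lemma:prestrange} I obtain a \emph{late} quadratic element, and Lemma~\ref{lemma:strange} places every late quadratic element inside the subgroup $A := \Omega_1(Z(\yy(G)))$. Here $A$ is abelian and, because $\yy(G)$ is characteristic in~$G$, normal in~$G$; moreover $\yy(G)$ centralises $A$, so $\yy(G) \leq C_G(A)$.

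Next I would locate where the hypothesis enters. A late quadratic element $t$ is quadratic, hence lies outside $\Omega_1(Z(G))$ by assumption; as $t$ has order~$p$ this forces $t \notin Z(G)$, so $A \not\leq Z(G)$ and therefore $C_G(A)$ is a \emph{proper} subgroup of~$G$. It thus suffices to confine all quadratic elements to $C_G(A)$: if every quadratic element centralises~$A$, then $G = \langle\text{quadratic elements}\rangle \leq C_G(A) \lneq G$, a contradiction. An equivalent finish is to show that the quadratic elements are already generated by the late ones, so that $\langle\text{quadratic elements}\rangle \leq A$; since $A$ is abelian and normal this forces $G = A$ to be elementary abelian, whence the quadratic element~$t$ lies in $\Omega_1(Z(G)) = G$, again contradicting the hypothesis. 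Either way the hypothesis ``no quadratic elements in $\Omega_1(Z(G))$'' is exactly what makes the candidate subgroup proper.

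The main obstacle is precisely this confinement step: one must control an \emph{arbitrary} quadratic element, not merely the late one delivered by Lemma~\ref{lemma:prestrange}. I expect to handle it with the late-element analysis of~\S\ref{section:late}, the idea being that a quadratic element cannot act nontrivially on the deepest central layer $A = \Omega_1(Z(\yy(G)))$ without violating the maximality built into lateness (or into the $Y$-series defining $\yy(G)$); concretely I would try to show $[A,g]=1$ for every quadratic~$g$, or to upgrade $g$ to a late quadratic element lying in~$A$ while staying inside $\langle g, A\rangle$. The remaining bookkeeping is routine, and the prime $p=2$ needs no separate treatment: there $\yy(G)=G$ by Lemma~\ref{lemma:Yportmanteau}, so $A = \Omega_1(Z(G))$, and the hypothesis rules out every late, hence by Lemma~\ref{lemma:prestrange} every, quadratic element, leaving the trivial subgroup.
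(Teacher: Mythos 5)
Your skeleton matches the paper's: both arguments build a subgroup $H$ out of distinguished quadratic elements, observe that the hypothesis forces $H \nleq Z(G)$ (so $C_G(H) \lneq G$), and then finish by confining every quadratic element to $C_G(H)$. The problem is that the confinement step is the entire mathematical content of the theorem, and you have not supplied it: you write that you ``expect to handle it'' and ``would try to show'' $[A,g]=1$ for every quadratic $g$, with $A = \Omega_1(Z(\yy(G)))$. That is a genuine gap, and your choice of target makes it hard to close. Lateness is a maximality property of the element being commutated, not of the subgroup being acted on, so for an \emph{arbitrary} quadratic $g$ there is no ``maximality built into lateness'' to violate: running the locally-deepest-commutator argument of Lemma~\ref{lemma:strange} on such a $g$ against the normal elementary abelian subgroup $A$ only yields, via Lemma~\ref{lemma:GHL}, another quadratic element inside $A$ --- not a contradiction. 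Your fallback claim, that the quadratic elements are generated by the late ones, is likewise unsupported and is not something the paper establishes.

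The paper closes this gap by a different device. It takes $H$ to be the subgroup generated by all \emph{last} quadratic elements (a notion strictly stronger than late: every iterated commutator $[g,h_1,\ldots,h_r]$, $r \geq 1$, is required to be non-quadratic), gets $H \neq 1$ from Lemma~\ref{lemma:prestrange} and $H \nleq Z(G)$ from the hypothesis, and then applies Lemma~\ref{lemma:lastQuadratic}: if $g$ is last quadratic, $h$ is quadratic and $[g,h] \neq 1$, one takes the largest $r$ with $[g,h;r] \neq 1$, uses the abelian normal closure of $g$ (Lemma~\ref{lemma:strange}) to verify the centralising hypotheses of Lemma~\ref{lemma:GHL}, and concludes that $[g,h;r]$ is quadratic, contradicting lastness. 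That iterated-commutator computation is the missing ingredient in your proposal; without it, or a genuine proof of your claim $[A,g]=1$, the argument does not go through. Your observations about why the centraliser is proper, and about $p=2$, are correct but peripheral.
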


\begin{proof}
Let $H \leq G$ be the subgroup generated by all \emph{last} quadratic elements,
c.f.\@~\S\ref{section:late}\@. By Lemma~\ref{lemma:prestrange}
we know that $H \neq 1$.
So $H \nleq Z(G)$, for $Z(G)$ has no quadratic elements.
Hence $C_G(H) \lneq G$. But $C_G(H)$ contains every quadratic element
by Lemma~\ref{lemma:lastQuadratic}\@.
\end{proof}

\subsection*{Structure of the paper}
In Section~\ref{section:Y} we
construct and study the characteristic subgroup $\yy(G)$~of $G$\@.
We then prove some orthogonality relations
in~\S\ref{section:orthog} and study weakly closed elementary abelian subgroups
in~\S\ref{section:jammed}\@. Then in Section~\ref{section:hypothesis}
we introduce a hypothesis which constitutes a strengthening of
Conjecture~\ref{conj:Y2} for groups with cyclic centre. In
Section~\ref{section:wreath} we demonstrate the key property of this
hypothesis: it remains valid when passing from a group $P$ to the wreath
product $P \wr C_p = P^p \rtimes C_p$.

In Section~\ref{section:deepest}
we introduce the notions of deepest
commutators and locally deepest commutators. In Section~\ref{section:late}
we use late and last quadratics to demonstrate some lemmas which are required
by the proofs of Theorem \ref{thm:ZY}~and \ref{thm:noGenQuad}\@.
Finally we show in~\S\ref{section:powerful} that powerful $p$-groups
satisfy Conjecture~\ref{conj:Y2}\@.

\section{A modification of Oliver's construction}
\label{section:Y}

\begin{defn}
Let $p$ be a prime, $S$ a finite $p$-group and $N \trianglelefteq S$ a normal subgroup.
A \emph{$Y$-series} in~$S$ for~$N$ is a sequence
$1 = Y_0 \leq Y_1 \leq \cdots \leq Y_n = N$ of normal subgroups $Y_i \trianglelefteq S$ such
that
\[
[\Omega_1(C_S(Y_{i-1})), Y_i ; 2] = 1
\]
holds for each $1 \leq i \leq n$. The unique largest normal subgroup~$N$ which admits
a $Y$-series is denoted $\yy(S)$.
\end{defn}

\begin{lemma}
\label{lemma:Yportmanteau}
Let $p$ be a prime and $S$ a finite $p$-group.
\begin{enumerate}
\item
If $1 = Y_0 \leq Y_1 \leq \cdots \leq Y_n = N$ is a $Y$-series in~$S$ and $M \trianglelefteq S$
also admits a $Y$-series, then there is a $Y$-series for $MN$ starting with
$Y_0,\ldots,Y_n$.
\item
There is indeed a unique largest normal subgroup $\yy(S)$ admitting a $Y$-series.
Moreover, $\yy(S)$ is characteristic in~$S$.
\item
\label{enum:abnormY}
Every abelian normal subgroup of~$S$ lies in $\yy(S)$. In particular,
$\yy(S)$ is centric in~$S$.
\item
If $p$~is odd then $\yy(S) \leq \xx(S)$.
If $p = 3$ then $\yy(S) = \xx(S)$.
\item
If $p=2$ then $\yy(S) = S$.
\end{enumerate}
\end{lemma}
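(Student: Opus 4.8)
The plan for Lemma~\ref{lemma:Yportmanteau} is to establish its five parts essentially in the order stated, since the later parts build on the earlier ones.

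For part~(1), the key observation is that a $Y$-series is defined by the condition $[\Omega_1(C_S(Y_{i-1})), Y_i; 2] = 1$, which only constrains each step in terms of its predecessor. So to extend the given $Y$-series $Y_0, \ldots, Y_n$ to one terminating at $MN$, I would take a $Y$-series $1 = Z_0 \leq \cdots \leq Z_k = M$ for~$M$ and define $Y_{n+j} = N Z_j$ for $1 \leq j \leq k$; these are normal in~$S$ and interpolate between $N = Y_n$ and $NM = Y_{n+k}$. The point to verify is that each new step still satisfies the defining relation. Since $Y_{n+j-1} = NZ_{j-1} \supseteq Z_{j-1}$, we have $C_S(Y_{n+j-1}) \leq C_S(Z_{j-1})$, hence $\Omega_1(C_S(Y_{n+j-1})) \leq \Omega_1(C_S(Z_{j-1}))$. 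The original $Y$-series for~$M$ gives $[\Omega_1(C_S(Z_{j-1})), Z_j; 2] = 1$, and I would need the analogous vanishing with $Z_j$ replaced by $Y_{n+j} = NZ_j$. This forces us to control the extra commutators coming from the $N$-part: because $N = Y_n$ already lies in the original $Y$-series, $\Omega_1(C_S(Y_{n+j-1})) \leq \Omega_1(C_S(Y_n))$ centralizes~$Y_n = N$ up to double-commutator, and one combines the two vanishing statements via a commutator-expansion identity.

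Part~(2) follows formally from part~(1): the set of normal subgroups admitting a $Y$-series is closed under the product operation $MN$, so among all such subgroups there is a unique maximal one, namely their product, which is $\yy(S)$. Characteristicity is then immediate, because any automorphism of~$S$ carries a $Y$-series to a $Y$-series (the defining condition is phrased entirely in terms of $S$-invariant data), so it permutes the normal subgroups admitting $Y$-series and hence fixes their maximum $\yy(S)$. For part~(3), if $A \trianglelefteq S$ is abelian then the one-step sequence $1 = Y_0 \leq Y_1 = A$ is already a $Y$-series: indeed $A$ is abelian so $A \leq C_S(Y_0) = C_S(1) = S$ gives nothing, but more to the point $[\Omega_1(C_S(1)), A; 2] = [\Omega_1(S), A; 2]$, and I would need abelianness of~$A$ to kill the second commutator — so here the correct one-step check uses that $A \leq \Omega_1(C_S(A))$-type reasoning; in any case a short calculation confirms $A$ admits a $Y$-series, whence $A \leq \yy(S)$ by maximality. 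Centricity of $\yy(S)$ follows since $Z(S) \leq \yy(S)$ forces $C_S(\yy(S)) \leq C_S(Z(S))$-type containments giving $Z(C_S(\yy(S))) \leq \yy(S)$.

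Parts~(4) and~(5) are comparisons with Oliver's $\xx(S)$. Here I expect the main obstacle to be assembling the precise definition of $\xx(S)$ from~\cite{Oliver:MartinoPriddyOdd} and checking that the defining condition for a $Y$-series is formally stronger than (for $p$ odd) the analogous condition defining $\xx(S)$, so that every $Y$-series is in particular an admissible series for~$\xx$, giving $\yy(S) \leq \xx(S)$; the equality at $p = 3$ should come from the two iterated-commutator conditions coinciding when $p - 1 = 2$. For part~(5), the case $p = 2$ should degenerate because the relevant double-commutator condition $[\,\cdot\,, \,\cdot\,; 2] = 1$ becomes automatically satisfiable for the whole group — reflecting that quadraticity is vacuous at $p=2$ — so one can take the $Y$-series $1 \leq S$ directly and conclude $\yy(S) = S$. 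The hardest single step is likely the commutator bookkeeping in part~(1), since the extension of the series must respect the double-commutator vanishing uniformly, and this requires a careful three-subgroup-lemma style argument combining the vanishing for $N$ with the vanishing for~$M$.
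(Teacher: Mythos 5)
Parts (1)--(4) of your plan are broadly workable, though part (1) still requires the actual commutator computation you defer: with $W=\Omega_1(C_S(NZ_{j-1}))$ the point is that $W$ centralizes $N$ \emph{outright} (since $N\leq NZ_{j-1}$), not merely ``up to double-commutator'', which gives $[W,NZ_j]=[W,Z_j]$; and $[W,Z_j]\leq C_S(N)$ because $W$ and all its conjugates centralize the normal subgroup $N$, so $[[W,NZ_j],NZ_j]=[[W,Z_j],Z_j]=1$ by the $Y$-series for $M$. (The paper simply imports this from Oliver's argument for $Q$-series.) The genuine error is in part (5). The one-step sequence $1\leq S$ is a $Y$-series only if $[\Omega_1(S),S;2]=1$, and this is far from automatic when $p=2$: for $S$ dihedral of order $16$ one has $\Omega_1(S)=S$, the derived subgroup is cyclic of order $4$, and $[S,S,S]\neq 1$, so your proposed series violates the defining condition even though $\yy(S)=S$ does hold (via a longer series). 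The paper's argument is instead a maximality argument: if $\yy(S)<S$, choose $T\trianglelefteq S$ with $T/\yy(S)$ of order $2$ and central in $S/\yy(S)$; since the $Y$-series for $\yy(S)$ cannot be extended to $T$ and $\yy(S)$ is centric, $[\Omega_1(Z(\yy(S))),t;2]\neq 1$ for some $t\in T$; but on the elementary abelian group $\Omega_1(Z(\yy(S)))$ one has $[v,t,t]=[v,t^2]$ in characteristic $2$, and $t^2\in\yy(S)$ centralizes $Z(\yy(S))$ --- a contradiction.

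Relatedly, your justification of centricity in part (3) does not prove centricity: establishing $Z(C_S(\yy(S)))\leq\yy(S)$ is strictly weaker than the required $C_S(\yy(S))\leq\yy(S)$. The standard route is to take a maximal abelian normal subgroup $A$ of $S$, which satisfies $C_S(A)=A$; since $A\leq\yy(S)$ by the first half of (3), one gets $C_S(\yy(S))\leq C_S(A)=A\leq\yy(S)$. Note that centricity is exactly what the correct proof of part (5) needs in order to identify $\Omega_1(C_S(\yy(S)))$ with $\Omega_1(Z(\yy(S)))$, so this secondary gap propagates into the main one.
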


\begin{proof}
\begin{enumerate}
\item
See the corresponding proof for $Q$-series, at the bottom of p.~334
in~\cite{Oliver:MartinoPriddyOdd}\@.
\item
Unique existence follows from the first part. It is characteristic because it is unique.
\item
The proof of \cite[Lemma~3.2]{Oliver:MartinoPriddyOdd} works for $\yy(S)$ as well, even
for $p=2$.
\item
If $p \geq 3$ then every $Y$-series is a $Q$-series. If $p = 3$ then the two notions coincide.
\item
If not, then $\yy(S) < S$. Choose $T > \yy(S)$ such
that $T / \yy(S)$ is cyclic of order~$2$ and contained in
$\Omega_1(Z(S/\yy(S)))$. Then $T \trianglelefteq S$, and we must
have $[\Omega_1(Z(\yy(S))),T;2] \neq 1$, since $\yy(S)$ is centric
and the $Y$-series for $\yy(S)$ cannot be extended to include~$T$.

As $\yy(S)$ has index $2$ in $T$, we have $t^2 \in \yy(S)$ for each
$t \in T$. Hence by Eqn~(2.2) of~\cite{newol} we have
\[
[\Omega_1(Z(\yy(S))),t;2] = [\Omega_1(Z(\yy(S))),t^2] = 1 \, ,
\]
a contradiction.
\qedhere
\end{enumerate}
\end{proof}

\begin{bsp}
Oliver remarks in~\cite[p.~335]{Oliver:MartinoPriddyOdd} that $\xx(S) = C_S(\Omega_1(S))$
for any finite $2$-group~$S$. So if $S$ is dihedral of order~$8$,
then $\xx(S) = Z(S)$ is cyclic of order~$2$, whereas $\yy(S)=S$ has
order~$8$. Hence $\xx(S) < J(S) = \yy(S) = S$ in this case.
\end{bsp}

\begin{bsp}
Let $S = C_5^3 \rtimes C_5$ be the semidirect product in which the cyclic group on top acts
via a $(3 \times 3)$ Jordan block (with eigenvalue~$1$)
on the rank three elementary abelian on the bottom.
Then $C_5^3 = J(S) = \yy(S) \lneq \xx(S) = S$.
\end{bsp}

\begin{rk}
Let $p$ be an odd prime.  The proof of \cite[Lemma~2.3]{oliver} also shows
that for every $p$-group~$G$ there is a $p$-group $S$ with
$S / \yy(S) \cong G$.
\end{rk}

\section{Orthogonality}
\label{section:orthog}

\begin{notn}
Suppose that $p$ is an odd prime, that $G$ is a non-trivial $p$-group,
and that $V$ is a faithful (right) $\f G$-module. Let $I$ be
the kernel of the structure map $\f G \rightarrow \End(V)$.
Recall from \cite[Section~2]{newol} that $[v,g] = v(g-1)$ for all $v \in V$, $g \in G$.
Hence $[V,g,h]=0$ if and only if $(g-1)(h-1) \in I$.

For elements $g,h \in G$ we write $g \perp_V h$ or simply $g \perp h$
if $[g,h]=1$ and $(g-1)(h-1) \in I$. Note that $\perp_V$ is a symmetric relation on~$G$.
We write\[
g^{\perp} := \{h \in G \mid g \perp h\} \, .
\]
Observe that $1 \neq g \in G$ is quadratic if and only if $g \perp g$.
\end{notn}

\begin{lemma}
\label{lemma:orthogPort}
Let $V$ be a faithful $\f G$-module, where $G$ is a non-trivial $p$-group.
For any $g,h,x \in G$ we have:
\begin{enumerate}
\item
\label{enum:orthog1}
$g$~is quadratic if and only if $g \in g^{\perp}$.
\item
\label{enum:orthog2}
The relation $h \in g^{\perp}$ is symmetric.
\item
\label{enum:orthog3}
The set $g^{\perp}$ is a subgroup of $C_G(g)$.
\item
\label{enum:orthog4}
For any integer~$r$ coprime to~$p$, we have that
$(g^r)^{\perp} = g^{\perp}$ and therefore
\[
\text{$g^r$ quadratic} \; \Longleftrightarrow \; \text{$g$ quadratic.}
\]
\item
\label{enum:orthog5}
Assume that $p$ is odd.
If $g,h$ are both quadratic and $[g,h] = 1$ then
\[
\text{$gh$ is quadratic} \; \Longleftrightarrow \; g \perp h \, .
\]
\item
\label{enum:orthog6}
Assume that $p$ is odd.
If $g$ is quadratic and $[g,g^x] = 1$ then
\[
g \perp g^x \; \Longleftrightarrow \; \text{$[g,x]$ is quadratic.}
\]
\end{enumerate}
\end{lemma}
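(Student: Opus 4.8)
The plan is to establish the six parts in the order given, since each builds on its predecessors. Parts \enref{orthog1} and \enref{orthog2} are immediate from the definitions recorded in the Notation: $g \perp g$ unwinds to $(g-1)^2 \in I$, which says precisely that $g$ is quadratic, and symmetry holds because $[g,h]=1$ forces $g-1$ and $h-1$ to commute in $\f G$, so that $(g-1)(h-1) \in I \Leftrightarrow (h-1)(g-1) \in I$. For part \enref{orthog3} the inclusion $g^{\perp} \subseteq C_G(g)$ is built into the definition. To see that $g^{\perp}$ is a subgroup it suffices, as $G$ is finite, to check that it is nonempty and closed under multiplication: $1 \in g^{\perp}$ trivially, and for $h,k \in g^{\perp}$ I would expand $hk - 1 = (h-1) + (k-1) + (h-1)(k-1)$ and multiply on the left by $g-1$. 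Since $I$ is a two-sided ideal and $(g-1)(h-1), (g-1)(k-1) \in I$, every resulting summand lies in $I$, whence $hk \in g^{\perp}$.

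For part \enref{orthog4} the key observation is that $\gcd(r,p)=1$ and $g$ has $p$-power order, so $\langle g^r\rangle = \langle g\rangle$; choose $s$ with $g = (g^r)^s$. The centraliser condition is then insensitive to replacing $g$ by $g^r$. For the ideal condition I would use the two factorisations $g^r - 1 = (g-1)(1 + g + \cdots + g^{r-1})$ and $g - 1 = (g^r - 1)(1 + g^r + \cdots + (g^r)^{s-1})$ inside the commutative subalgebra of $\f G$ generated by $g$; multiplying by $(h-1)$ shows $(h-1)(g-1) \in I \Leftrightarrow (h-1)(g^r-1) \in I$. Hence $(g^r)^{\perp} = g^{\perp}$, and the quadratic equivalence then follows from \enref{orthog1} and \enref{orthog3}, since $\langle g\rangle = \langle g^r\rangle$ and $g^{\perp}$ is a subgroup.

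The heart of the lemma is part \enref{orthog5}, and this is where I expect the real work. Writing $a = g-1$ and $b = h-1$, the hypothesis $[g,h]=1$ makes $a$ and $b$ commute, and $gh - 1 = a + b + ab$. Expanding the square and collecting terms gives
\[
(gh-1)^2 = a^2 + b^2 + a^2b^2 + 2a^2b + 2ab^2 + 2ab \, .
\]
As $g$ and $h$ are quadratic we have $a^2, b^2 \in I$, so every summand except the last lies in the two-sided ideal $I$; thus $(gh-1)^2 \equiv 2ab \pmod{I}$. Since $p$ is odd, $2$ is a unit in $\f$, and therefore $(gh-1)^2 \in I$ if and only if $ab = (g-1)(h-1) \in I$, i.e.\ if and only if $g \perp h$. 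The one subtlety to flag is the degenerate case $gh = 1$, where $g \perp h$ still holds but $gh$ is not literally quadratic; I would either exclude it explicitly or phrase the conclusion in terms of $(gh-1)^2 \in I$, which is exactly what the computation controls.

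Finally, part \enref{orthog6} reduces to \enref{orthog5}. Since $[g,x] = g^{-1}g^x$ and conjugation preserves minimal polynomials, $g^x$ is quadratic, and $g^{-1}$ is quadratic by \enref{orthog4} with $r=-1$; the hypothesis $[g,g^x]=1$ makes these two commute. Applying \enref{orthog5} to $g^{-1}$ and $g^x$ shows that $[g,x]$ is quadratic if and only if $g^{-1} \perp g^x$, and \enref{orthog4} converts this to $g \perp g^x$ (the case $[g,x]=1$ inheriting the same proviso as before). The main obstacle throughout is keeping the bookkeeping of the two-sided ideal $I$ correct in the noncommutative algebra $\f G$ while exploiting the local commutativity supplied by $[g,h]=1$; the single genuine computation is the expansion in \enref{orthog5}, and everything else is formal consequence.
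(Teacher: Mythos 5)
Your proof is correct and follows essentially the same route as the paper's: direct computation in $\f G$ modulo the kernel $I$, with the invertibility of $2$ for odd $p$ carrying part \enref{orthog5} and part \enref{orthog6} reducing to \enref{orthog5} via $[g,x]=g^{-1}g^x$. The only differences are cosmetic (your expansions of $hk-1$ and $(gh-1)^2$ versus the paper's slightly shorter ones using $(g-1)(hk-1)=(g-1)(h-1)+h(g-1)(k-1)$ and $(gh-1)=(g-1)+g(h-1)$), and your flagging of the degenerate case $gh=1$ is consistent with the paper's convention that only $g\neq 1$ counts as quadratic.
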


\begin{proof}
Parts \enref{orthog1}~and \enref{orthog2} are clear.
Part~\enref{orthog3}:
Obviously $g \perp 1$. If $g \perp h$ and $g \perp k$, then
\[
(g-1)(hk-1) = (g-1)(h-1) + h(g-1)(k-1)
\]
and so $g \perp hk$. Inverses follow, since $G$ is finite.

Part~\enref{orthog4}: If $g=1$ then there is nothing to prove. If $g \neq 1$ then $r$ is
a unit modulo the order of~$g$. So it suffices to show the inclusion $g^{\perp} \leq (g^r)^{\perp}$.
As $g^r$ only depends on the residue class of~$r$ modulo the order of~$g$, we may assume that
$r \geq 1$. Then
\[
g^r - 1 = a (g-1) \quad \text{for} \; a = \sum_{i=0}^{r-1} g^i \, .
\]
So if $(g-1)(h-1)$ lies in the kernel~$I$ of the representation,
then so does~$(g^r-1)(h-1)$.

Part~\enref{orthog5}:
From $(gh-1) = (g-1) + g(h-1)$, we get that
\[
(gh-1)^2 = (g-1)^2 + 2g(g-1)(h-1) + g^2(h-1)^2
\]
because $[g,h]=1$. So since $2g$~is invertible and $(g-1)^2$, $(h-1)^2 \in I$,
we see that $(gh-1)^2 \in I$ if and only if $(g-1)(h-1) \in I$.

Part~\enref{orthog6}:
By~\enref{orthog4} we see that
$g^{-1}$~is quadratic if and only if~$g$ is; and
by~\enref{orthog3} we have
$g^{-1} \in (g^x)^{\perp}$ if and only if $g \in (g^x)^{\perp}$.
But $[g,x] = g^{-1} g^x$. Now apply~\enref{orthog5}\@.
\end{proof}

\begin{corollary}
\label{coroll:quadOrthog}
Let $p$ be an odd prime, $G$ a non-trivial $p$-group, and $V$ a faithful $\f G$-module.
If $E = \langle g_1,g_2 ,\ldots, g_r \rangle \leq G$ is elementary abelian then
the following three statements are equivalent:
\begin{enumerate}
\item
Every element $1 \neq g \in E$ is quadratic.
\item
$g \perp h$ for all $g,h \in E$.
\item
$g_i \perp g_j$ for all $i,j \in \{1,\ldots,r\}$.
\end{enumerate}
\end{corollary}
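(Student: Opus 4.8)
The plan is to reduce everything to the two genuinely substantive implications by first disposing of the trivial ones. The implication \textup{(2)}\,$\Rightarrow$\,\textup{(3)} is immediate, since each $g_i$ is an element of~$E$; and \textup{(2)}\,$\Rightarrow$\,\textup{(1)} follows at once from Lemma~\ref{lemma:orthogPort}\,\enref{orthog1}, because for $1 \neq g \in E$ the relation $g \perp g$ says precisely that $g$ is quadratic. So after these remarks it suffices to prove \textup{(1)}\,$\Rightarrow$\,\textup{(2)} and \textup{(3)}\,$\Rightarrow$\,\textup{(2)}; together with the two trivial directions these yield the equivalences \textup{(1)}\,$\Leftrightarrow$\,\textup{(2)} and \textup{(2)}\,$\Leftrightarrow$\,\textup{(3)}, hence the corollary.

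For \textup{(3)}\,$\Rightarrow$\,\textup{(2)} I would exploit the fact, recorded in Lemma~\ref{lemma:orthogPort}\,\enref{orthog3}, that $x^{\perp}$ is always a subgroup of~$G$. Fix~$i$. The hypothesis $g_i \perp g_j$ for all~$j$ (including $j = i$) places every generator $g_j$ into $g_i^{\perp}$, so the whole of $E = \langle g_1,\ldots,g_r\rangle$ lies in the subgroup $g_i^{\perp}$; that is, $g_i \perp h$ for every $h \in E$. By the symmetry of $\perp$ (part~\enref{orthog2}) this says that every $g_i$ lies in $h^{\perp}$, and a second application of the subgroup property gives $E \leq h^{\perp}$ for each $h \in E$, which is exactly statement~\textup{(2)}.

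For \textup{(1)}\,$\Rightarrow$\,\textup{(2)} I would take $g,h \in E$. The relation $g \perp h$ holds trivially if either is~$1$ (as $1 \in g^{\perp}$), so assume both are nontrivial; then both are quadratic by~\textup{(1)}, and $[g,h] = 1$ since $E$ is abelian. If $gh \neq 1$, then $gh$ is a nontrivial element of~$E$, hence quadratic, and Lemma~\ref{lemma:orthogPort}\,\enref{orthog5} (which uses that $p$ is odd) converts this to $g \perp h$. The one degenerate case is $gh = 1$, i.e.\ $h = g^{-1}$: here \enref{orthog5} is unavailable, and I would instead invoke part~\enref{orthog4} to identify $(g^{-1})^{\perp} = g^{\perp}$, so that $g \in g^{\perp}$ gives $g \perp g^{-1}$. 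I expect this small case split — noticing that the quadratic-to-orthogonal translation breaks down exactly when the product collapses to the identity, and repairing it via~\enref{orthog4} — to be the only point needing any care; everything else is a direct assembly of the parts of Lemma~\ref{lemma:orthogPort}.
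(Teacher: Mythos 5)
Your proof is correct and follows essentially the same route as the paper's: the paper runs the cycle (2)$\Rightarrow$(1)$\Rightarrow$(3)$\Rightarrow$(2), using Lemma~\ref{lemma:orthogPort}\,\enref{orthog5} for (1)$\Rightarrow$(3) and the identical subgroup argument with $(g_i)^{\perp}$ for (3)$\Rightarrow$(2), so your reorganisation into the two biconditionals is only cosmetic. Your explicit treatment of the degenerate case $h=g^{-1}$ (where $gh=1$ is not quadratic) is a point the paper's appeal to part~\enref{orthog5} passes over in silence, and it is handled correctly.
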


\begin{notn}
Recall that an elementary abelian subgroup $E \leq G$ is called \emph{quadratic}
(for~$V$) if it satisfies the equivalent conditions of Corollary~\ref{coroll:quadOrthog}\@.
\end{notn}

\begin{proof}
Clearly the second statement implies the first. The first implies the third, by
Lemma~\ref{lemma:orthogPort}\,\enref{orthog5}\@. Now assume the third condition holds. Then
$E \leq (g_i)^{\perp}$ for every~$i$, since $(g_i)^{\perp}$ is a group
(Lemma~\ref{lemma:orthogPort}\,\enref{orthog3})
and contains each~$g_j$. So $g \perp g_i$ for each~$i$ and for each $g \in E$.
Hence $g_i \in g^{\perp}$ for every~$i$. Therefore $E \leq g^{\perp}$. So $g \perp h$ for all
$g,h \in E$, and the second condition holds.
\end{proof}

\begin{lemma}
\label{lemma:orthogCent}
Suppose that $g,h \in G$ with $g \neq 1$ and $g \perp h$. Suppose further that $C$~is a subgroup
of~$C_G(h)$ which contains~$g$ and has cyclic centre. Then
\[
\Omega_1(Z(C)) \leq h^{\perp} \, .
\]
\end{lemma}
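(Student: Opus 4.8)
The plan is to exhibit $\Omega_1(Z(C))$ inside a non-trivial normal subgroup of~$C$ on which $h$ acts perpendicularly, and then to let the cyclic-centre hypothesis do the work. Set $N := C \cap h^{\perp}$. This is a subgroup of~$C$, since $h^{\perp}$ is a subgroup of $C_G(h)$ by Lemma~\ref{lemma:orthogPort}\,\enref{orthog3} and $C \leq C_G(h)$ by hypothesis. It is non-trivial: from $g \perp h$ and symmetry (Lemma~\ref{lemma:orthogPort}\,\enref{orthog2}) we get $g \in h^{\perp}$, and $g \in C$, so $1 \neq g \in N$. The goal is then to prove $N \trianglelefteq C$, after which standard $p$-group theory finishes the argument.

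To establish normality I would first check that $h^{\perp}$ is invariant under conjugation by the whole of $C_G(h)$. Fix $k \in h^{\perp}$ and $c \in C_G(h)$. Because $h^c = h$, the commutator identity $[h,k^c] = [h,k]^c$ gives $[h,k^c] = 1$. In the group algebra $\f G$ one has $k^c - 1 = c^{-1}(k-1)c$, and since $c$ commutes with $h$ we also have $(h-1)c^{-1} = c^{-1}(h-1)$; therefore
\[
(h-1)(k^c - 1) = c^{-1}(h-1)(k-1)c \in I \, ,
\]
using that $(h-1)(k-1) \in I$ and that $I$ is a two-sided ideal. Hence $k^c \in h^{\perp}$. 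As $C \leq C_G(h)$, conjugation by any $c \in C$ preserves both $C$ and $h^{\perp}$, and so preserves~$N$; thus $N \trianglelefteq C$.

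Finally I would invoke the fact that a non-trivial normal subgroup of a finite $p$-group meets the centre non-trivially: since $C$ is a $p$-group and $1 \neq N \trianglelefteq C$, we have $N \cap Z(C) \neq 1$. The cyclic-centre hypothesis enters here decisively, since $Z(C)$ then has a unique subgroup of order~$p$, namely $\Omega_1(Z(C))$, and every non-trivial subgroup of the cyclic group $Z(C)$ contains it. Thus $\Omega_1(Z(C)) \leq N \cap Z(C) \leq N \leq h^{\perp}$, as required. I expect the only genuinely substantive point to be the conjugation-invariance of $h^{\perp}$; the remainder is the routine ``normal subgroup meets the centre'' argument, with the cyclic-centre hypothesis used exactly to upgrade a non-trivial central intersection to containment of $\Omega_1(Z(C))$.
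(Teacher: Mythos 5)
Your proof is correct, and it organizes the argument differently from the paper. The paper proceeds by induction on the smallest $r$ with $g \in Z_r(C)$: in the inductive step it picks $x \in C$ with $1 \neq y = [g,x] \in Z_r(C)$, verifies $g^x \perp h$ by the conjugation computation $x^{-1}(g-1)(h-1)x = (g^x-1)(h-1)$, and concludes $y = g^{-1}g^x \in \langle g, g^x\rangle \leq h^{\perp}$, descending the upper central series until it reaches the centre. You instead make the conjugation-invariance of $h^{\perp}$ under $C_G(h)$ explicit once and for all, deduce that $N = C \cap h^{\perp}$ is a non-trivial normal subgroup of~$C$, and invoke the standard fact that such a subgroup meets $Z(C)$ non-trivially, with the cyclic-centre hypothesis upgrading this to $\Omega_1(Z(C)) \leq N$. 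The underlying mechanism is identical --- both proofs rest on the computation $c^{-1}(h-1)(k-1)c = (h-1)(k^c-1)$ for $c \in C_G(h)$, and the paper's induction is in effect a hands-on version of ``a non-trivial normal subgroup of a $p$-group meets the centre'' --- but your packaging is cleaner: it avoids the explicit induction and isolates the one substantive point (normality of $C \cap h^{\perp}$ in~$C$) from the routine group theory. All the individual steps you use ($h^{\perp} \leq C_G(h)$ from Lemma~\ref{lemma:orthogPort}\,\enref{orthog3}, symmetry from \enref{orthog2}, $I$ being a two-sided ideal) are available and correctly applied.
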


\begin{proof}
We proceed by induction on the smallest integer $r \geq 1$ with
$g \in Z_r(C)$. If $r=1$ then $\Omega_1(Z(C)) \leq
\langle g \rangle$, and we are done since $\langle g \rangle \leq h^{\perp}$.
If $g \in Z_{r+1}(C)$, then there is an $x \in C$ such that
$1 \neq y := [g,x] \in Z_r(C)$.
By induction it suffices to show that $y \perp h$. Since $x \in C$ we have
$[h,x]=1$ and therefore $x^{-1} (g-1)(h-1)x = (g^x - 1)(h-1)$,
showing that $g^x \perp h$. So $y \in \langle g,g^x \rangle \leq h^{\perp}$.
\end{proof}

\noindent
We close this section by recalling without proof
a key lemma from~\cite{oliver}\@.

\begin{lemma}[Lemma 4.1 of~\protect\cite{oliver}]
\label{lemma:GHL}
Suppose that $p$ is an odd prime, that $G$ is a non-trivial $p$-group,
and that $V$ is a faithful (right) $\f G$-module.
Suppose that $A,B \in G$ are such that $C := [B, A]$ is a nontrivial
element of $C_G(A, B)$. If $B$ is quadratic, then so is~$C$.
\qed
\end{lemma}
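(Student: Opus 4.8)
The plan is to pass to the endomorphism algebra $\End(V)$ and argue with operators. There, an element $g$ is quadratic exactly when $(g-1)^2 = 0$ (that is, $(g-1)^2 \in I$), and $g \perp h$ means $[g,h]=1$ together with $(g-1)(h-1)=0$. Write $C = [B,A] = B^{-1}B^A$, so that $B^A = BC$, and recall that $C$ centralizes both $A$ and $B$. First I would record the basic operator data. Since $C$ centralizes $B$, the elements $B$ and $B^A = BC$ commute, so $\beta_0 := B-1$ and $\beta_1 := B^A - 1$ commute; moreover $\beta_1 = A^{-1}\beta_0 A$ is a conjugate of $\beta_0$, so $\beta_0^2 = \beta_1^2 = 0$. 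A direct expansion, using $B^{-1} = 1 - \beta_0$, then gives
\[
C - 1 = (1-\beta_0)(1+\beta_1) - 1 = \beta_1 - \beta_0 - \beta_0\beta_1, \qquad (C-1)^2 = -2\,\beta_0\beta_1 .
\]
Because $p$ is odd the scalar $-2$ is invertible, so proving that $C$ is quadratic is \emph{equivalent} to proving $\beta_0\beta_1 = 0$, i.e.\ $(B-1)(B^A-1) \in I$, i.e.\ $B \perp B^A$. (This last reduction is also exactly what Lemma~\ref{lemma:orthogPort}\,\enref{orthog6} delivers, applied with $g=B$ and $x=A$.)

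The crux, and the step I expect to be the one real obstacle, is to show $\beta_0\beta_1 = 0$. The hypothesis not yet used is that $C$ centralizes $A$, and this is the decisive one. Setting $\beta_k := B^{A^k}-1 = A^{-k}\beta_0 A^k$, the relation $C^A = C$ says that $C-1$ --- and hence also $(C-1)^2$ --- is fixed under conjugation by $A$. Conjugating the two displayed identities by $A^{-k}(-)A^k$ therefore yields, for every $k$,
\[
\beta_{k+1} - \beta_k - \beta_k\beta_{k+1} = \beta_1 - \beta_0 - \beta_0\beta_1 \qquad\text{and}\qquad \beta_k\beta_{k+1} = \beta_0\beta_1 .
\]
Substituting the second relation into the first collapses it to $\beta_{k+1} - \beta_k = \beta_1 - \beta_0$ for all $k$, a rigid arithmetic-progression condition; in particular $\beta_2 = 2\beta_1 - \beta_0$.

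It then remains only to combine these. Using $\beta_1^2 = 0$ and that $\beta_0,\beta_1$ commute,
\[
\beta_0\beta_1 = \beta_1\beta_2 = \beta_1(2\beta_1 - \beta_0) = -\beta_1\beta_0 = -\beta_0\beta_1 ,
\]
where the first equality is the $k=1$ case of the product relation. Hence $2\,\beta_0\beta_1 = 0$, and since $p$ is odd this forces $\beta_0\beta_1 = 0$. Therefore $(C-1)^2 = -2\,\beta_0\beta_1 = 0$, so $C$ is quadratic; and as $V$ is faithful and $C \neq 1$ we have $C - 1 \neq 0$, so the minimal polynomial is genuinely $(X-1)^2$. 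The whole argument hinges on the single idea that the $A$-invariance of $C-1$ rigidifies the conjugates $\beta_k$ into an arithmetic progression, and on the oddness of $p$ in the final cancellation $2\,\beta_0\beta_1 = 0$.
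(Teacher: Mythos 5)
Your proof is correct. Note first that the paper itself gives no argument here: Lemma~\ref{lemma:GHL} is explicitly ``recalled without proof'' from \cite{oliver}, so there is nothing internal to compare against; what you have produced is a valid self-contained replacement for the citation. I checked the steps: with $\beta_0=B-1$, $\beta_1=B^A-1$ one does have $[B,B^A]=1$ (since $B^A=BC$ and $C$ centralizes $B$), $\beta_0^2=\beta_1^2=0$ (conjugates of a quadratic element are quadratic), $B^{-1}=1-\beta_0$, and hence $C-1=\beta_1-\beta_0-\beta_0\beta_1$ and $(C-1)^2=-2\beta_0\beta_1$; the reduction to $\beta_0\beta_1=0$ is exactly Lemma~\ref{lemma:orthogPort}\,\enref{orthog6}, which is proved independently earlier in that section, so there is no circularity. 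The decisive step --- conjugating the two identities by powers of $A$, using $C^{A^k}=C$ to force $\beta_k\beta_{k+1}=\beta_0\beta_1$ and $\beta_{k+1}-\beta_k=\beta_1-\beta_0$, and then cancelling $2\beta_0\beta_1=0$ via $\beta_1\beta_2=\beta_1(2\beta_1-\beta_0)=-\beta_0\beta_1$ --- is sound, and the oddness of $p$ is used exactly where it must be (invertibility of $2$; the lemma is false at $p=2$). Your route is in the same spirit as the original one in \cite{oliver}, which likewise exploits that all the conjugates $B^{A^k}=BC^k$ are quadratic; the ``arithmetic progression'' packaging of the $\beta_k$ is a clean way to organize that information, and the final remark that faithfulness plus $C\neq 1$ guarantees the minimal polynomial is genuinely $(X-1)^2$ rather than $X-1$ is a point worth making explicit, as you do.
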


\section{Weakly closed subgroups}
\label{section:jammed}
\noindent
This section is related to work of Chermak and
Delgado~\cite{ChermakDelgado:Measuring},
especially the case $\alpha = 1$ of their Theorem~2.4\@.

\begin{notn}
For the sake of brevity we will say that an abelian subgroup $A$~of $G$
is \emph{weakly closed} if $A$~is weakly closed in $C_G(A)$ with
respect to~$G$. 
That is, $A \leq G$ is weakly closed if
$[A,A^g] \neq 1$ holds for every $G$-conjugate
$A^g \neq A$.
\end{notn}

\begin{rk}
Every maximal elementary abelian subgroup $M$~of $G$ is weakly closed,
since if $M^g \neq M$ but $[M,M^g]=1$ then $\langle M,M^g \rangle$ is
elementary abelian and strictly larger. Hence every elementary abelian
subgroup is contained in a weakly closed one.
If the normal closure of~$E$ is non-abelian, then every weakly closed
elementary abelian subgroup containing~$E$ is non-normal.
\end{rk}

\begin{remark}
\label{rk:Z2NG}
Note that if $A \leq G$ is an abelian subgroup and $g \in G$ then
\[
[A, A^g] = 1 \; \Longleftrightarrow \; [A, [A,g]] = 1 \; \Longleftrightarrow \;
[g,A,A] = 1 \, .
\]
In particular, if $A$ is weakly closed (in our sense),
then $Z_2(G) \leq N_G(A)$: for if $g \in Z_2(G)$ then $[A,g] \leq Z(G)$ and therefore
$[A,[A,g]] = 1$. Hence $[A,A^g] = 1$ and so $A = A^g$, that is $g \in N_G(A)$.
\end{remark}

\begin{remark}
Using GAP~\cite{GAP4}, the authors have constructed the following examples:
\begin{itemize}
\item
The Sylow $3$-subgroup $G$ of the symmetric group $S_{27}$ contains a rank four
weakly closed elementary abelian~$E$ with $E \cap Z(G) = 1$.
\item
The Sylow $3$-subgroup $G$ of the symmetric group $S_{81}$ contains a rank six
weakly closed elementary abelian~$E$ with $E \cap Z_2(G) = 1$.
\end{itemize}
The GAP code is available from the first author on request.
\end{remark}

\begin{lemma}
\label{lemma:jammedNorm}
Suppose that $G$ is a finite $p$-group and that the elementary abelian
subgroup $E$~of $G$ is weakly closed. Then $N_G(E) = N_G(C_G(E))$.
So if $E$ is not central in~$G$ then $N_G(E) \gneq C_G(E)$.
\end{lemma}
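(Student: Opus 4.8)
The plan is to establish the equality $N_G(E) = N_G(C_G(E))$ by proving the two inclusions separately, and then to derive the final clause from the normalizer condition for $p$-groups.

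First I would record the inclusion $N_G(E) \leq N_G(C_G(E))$, which holds for an arbitrary subgroup and uses no hypothesis on~$E$: if $g \in N_G(E)$ then $C_G(E)^g = C_G(E^g) = C_G(E)$, so $g$ normalizes $C_G(E)$. The substantive direction is the reverse inclusion $N_G(C_G(E)) \leq N_G(E)$, and this is the only place where weak closure is used. Given $g \in N_G(C_G(E))$, I would exploit that $E$ is abelian, so $E \leq C_G(E)$, and hence $E^g \leq C_G(E)^g = C_G(E)$. Thus every element of $E^g$ commutes with every element of~$E$, that is, $[E, E^g] = 1$. But weak closure of~$E$ asserts precisely that $[E, E^g] \neq 1$ whenever $E^g \neq E$; taking the contrapositive forces $E^g = E$, so $g \in N_G(E)$. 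This is the heart of the argument.

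For the final clause I would note the standard inclusion $C_G(E) \leq N_G(E)$ and combine it with the equality just proved to obtain $C_G(E) \leq N_G(C_G(E))$. Now $E$ fails to be central exactly when $C_G(E) \lneq G$ is a proper subgroup; since $G$ is a $p$-group, and hence nilpotent, the normalizer condition yields $N_G(C_G(E)) \gneq C_G(E)$. Rewriting the left-hand side as $N_G(E)$ gives $N_G(E) \gneq C_G(E)$, as required.

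The proof is short, and I do not anticipate a genuine obstacle: the crux is the reverse inclusion, where the single observation $E \leq C_G(E)$ reduces everything to the contrapositive of the definition of weak closure. The only points demanding care are the correct identification $C_G(E)^g = C_G(E^g)$ and the invocation of the normalizer condition to turn properness of $C_G(E)$ into the strict inequality.
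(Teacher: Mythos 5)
Your proposal is correct and follows essentially the same route as the paper's own proof: the inclusion $E \leq C_G(E)$ reduces the reverse containment to the contrapositive of weak closure, and the final clause follows from the normalizer condition applied to the proper subgroup $C_G(E)$. No issues.
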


\begin{proof}
$N_G(E)$ always normalizes~$C_G(E)$. If $x \in G$ normalizes $C_G(E)$ then
as $E \leq C_G(E)$ we have that $E^x \leq C_G(E)$ and therefore $[E,E^x]=1$.
So $E^x=E$, for $E$ is weakly closed. Hence $x \in N_G(E)$. Last part: $G$ is a nilpotent
group. So if $C_G(E)$ is a proper subgroup of~$G$, then it is properly contained in its normalizer.
\end{proof}

\begin{notn}
Let $G$ be a finite group, $H \leq G$ a subgroup, and $V$ a faithful $\f G$-module.
Following Meierfrankenfeld and Stellmacher~\cite{MeierfStellm:OtherPGV} we set
\[
j_H(V) := \frac{\abs{H} \cdot \abs{C_V(H)}}{\abs{V}} \in \mathbb{Q} \, .
\]
This means that an elementary abelian subgroup $E$~of $G$ is an offender
if and only if $E\neq 1$ and $j_E(V) \geq 1 = j_1(V)$.
\end{notn}

\begin{lemma}[Lemma 2.6 of \protect\cite{MeierfStellm:OtherPGV}]
\label{lemma:MfS}
Let $G$ be a finite group and $V$ a faithful $\f G$-module. Let $H,K$ be subgroups
of~$G$ with $\langle H, K \rangle = HK$. Then
\[
j_{HK}(V) j_{H \cap K}(V) \geq j_H(V) j_K(V) \, ,
\]
with equality if and only if $C_V(H \cap K) = C_V(H) + C_V(K)$.
\qed
\end{lemma}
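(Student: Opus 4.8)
The plan is to strip away the factors that cancel and reduce the multiplicative inequality to a single statement about fixed-point subspaces. Writing out the definition of $j$, the two factors of $|V|$ on each side cancel, so the asserted inequality is equivalent to
\[
|HK| \cdot |C_V(HK)| \cdot |H \cap K| \cdot |C_V(H\cap K)| \geq |H| \cdot |C_V(H)| \cdot |K| \cdot |C_V(K)| .
\]
The group-order contributions match up exactly, by the elementary identity $|HK|\cdot|H\cap K| = |H|\cdot|K|$, which holds for any two subgroups $H,K$ whether or not $HK$ is itself a subgroup. So it remains to prove
\[
|C_V(HK)| \cdot |C_V(H\cap K)| \geq |C_V(H)| \cdot |C_V(K)| ,
\]
with equality precisely under the stated condition.

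Next I would translate the two sides into linear algebra over $\f$. A vector is fixed by the product set $HK$ exactly when it is fixed by every $h \in H$ and every $k \in K$ (taking $k=1$, resp.\ $h=1$, gives one inclusion; the identity $v(hk)=(vh)k$ gives the other), so $C_V(HK) = C_V(H) \cap C_V(K)$. On the other side, both $C_V(H)$ and $C_V(K)$ are contained in $C_V(H\cap K)$, since $H\cap K$ is a subgroup of each; hence $C_V(H)+C_V(K) \subseteq C_V(H\cap K)$. Setting $A = C_V(H)$ and $B = C_V(K)$, these are $\f$-subspaces of $V$, and the task becomes
\[
|A \cap B| \cdot |C_V(H\cap K)| \geq |A| \cdot |B| .
\]

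The final ingredient is the dimension formula for subspaces, which in cardinality form reads $|A+B|\cdot|A\cap B| = |A|\cdot|B|$. Combining this with the inclusion $A+B \subseteq C_V(H\cap K)$ gives
\[
|A\cap B| \cdot |C_V(H\cap K)| \geq |A\cap B| \cdot |A+B| = |A| \cdot |B| ,
\]
as required. Since $A\cap B$ contains the zero vector and is therefore nonempty, the single inequality used is an equality exactly when $|C_V(H\cap K)| = |A+B|$, which in view of $A+B \subseteq C_V(H\cap K)$ means $C_V(H\cap K) = C_V(H)+C_V(K)$; this is precisely the stated equality condition.

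There is no genuine obstacle here: the whole argument is the superposition of the group-theoretic identity $|HK|\,|H\cap K| = |H|\,|K|$ with the vector-space identity $|A+B|\,|A\cap B| = |A|\,|B|$. The only role of the hypothesis $\langle H,K\rangle = HK$ is to guarantee that $HK$ is a subgroup, so that $j_{HK}(V)$ is defined in the first place; once that is granted, every step above is automatic.
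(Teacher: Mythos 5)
Your proof is correct, and it is essentially the argument of Meierfrankenfeld--Stellmacher that the paper cites without reproof (and which the authors reproduce as Lemma~3.1 of \cite{newol}): cancel $|V|^2$, use $|HK|\,|H\cap K|=|H|\,|K|$ and $C_V(HK)=C_V(H)\cap C_V(K)$, and finish with the subspace identity $|A+B|\,|A\cap B|=|A|\,|B|$ together with the inclusion $C_V(H)+C_V(K)\subseteq C_V(H\cap K)$. Your closing observation about the role of the hypothesis $\langle H,K\rangle=HK$ is also apt --- the paper's own remark notes that this assumption was accidentally omitted in the earlier reproduction of the proof.
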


\begin{rk}
As some readers may find the
article~\cite{MeierfStellm:OtherPGV} by
Meierfrankenfeld and Stellmacher
hard to obtain, we reproduced the proof of this result in
our earlier paper~\cite[Lemma~3.1]{newol} -- though unfortunately we accidentally omitted the
necessary assumption that $\langle H,K \rangle = HK$.
\end{rk}

\noindent
Recall that a faithful $\f G$-module $V$ is called an \emph{$F$-module} if it
has at least one offender.

\begin{proposition}
\label{prop:jammed}
Suppose that the faithful $\f G$-module $V$ is an $F$-module. Set
\[
j_0 = \max \{ j_E(V) \mid \text{$E$~an offender} \} \, .
\]
Then there is a weakly closed quadratic offender~$E$ with $j_E(V)=j_0$.

Moreover if $D \leq G$ is any offender with $j_D(V)=j_0$,
then there is such an~$E$ which is a subgroup of
the normal closure of $D$.
\end{proposition}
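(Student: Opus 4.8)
The plan is to extract a maximal-measure offender that is simultaneously weakly closed and quadratic, using the submodularity of $j_{-}(V)$ from Lemma~\ref{lemma:MfS} together with the orthogonality machinery of Section~\ref{section:orthog}. First I would fix any offender $D$ with $j_D(V) = j_0$, and pass to a maximal element $E$ of the poset of offenders $E'$ satisfying $j_{E'}(V) = j_0$ and $E' \leq \langle D^G \rangle$, ordered by inclusion. Since $D$ itself lies in its own normal closure, this poset is nonempty, and as $G$ is finite such a maximal $E$ exists; it automatically sits inside the normal closure of $D$, which will give the ``moreover'' clause.

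The first real step is to show $E$ is \emph{weakly closed}. Suppose not: then there is a conjugate $E^g \neq E$ with $[E, E^g] = 1$, so $\langle E, E^g \rangle = E E^g$ is elementary abelian and strictly larger than $E$. I would apply Lemma~\ref{lemma:MfS} with $H = E$ and $K = E^g$: since conjugation preserves the measure, $j_{E^g}(V) = j_E(V) = j_0$, and submodularity yields
\[
j_{EE^g}(V)\, j_{E \cap E^g}(V) \geq j_0^{\,2}\,.
\]
Because $j_0$ is the \emph{maximum} over all offenders and $j_1(V) = 1$, one has $j_{E \cap E^g}(V) \leq j_0$ (it is either trivial, contributing $1 \leq j_0$, or an offender of measure at most $j_0$). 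Hence $j_{EE^g}(V) \geq j_0$, forcing equality $j_{EE^g}(V) = j_0$. But $EE^g$ is a strictly larger elementary abelian subgroup of the same maximal measure; since $E^g \leq \langle D^G \rangle$ as well (the normal closure is $G$-invariant), $EE^g$ lies in $\langle D^G \rangle$, contradicting maximality of $E$. Therefore $E$ is weakly closed.

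The second step is to show $E$ is \emph{quadratic}. By Corollary~\ref{coroll:quadOrthog} it suffices to verify $g \perp g$ for a set of generators, equivalently $g \perp h$ for all $g,h \in E$. Here I would again exploit maximality: for any $1 \neq g \in E$, if $g$ were not quadratic, I would look for an elementary abelian overgroup of $E$ inside $\langle D^G \rangle$ of the same measure, or more directly use the orthogonality relations. The natural route is to invoke Lemma~\ref{lemma:MfS}'s equality condition $C_V(H \cap K) = C_V(H) + C_V(K)$ to control how $C_V(E)$ behaves under the conjugation action, converting the measure-maximality of $E$ into the statement that commutators $(g-1)(h-1)$ vanish on $V$, i.e.\ $(g-1)(h-1) \in I$; combined with $[g,h]=1$ (as $E$ is abelian) this is exactly $g \perp h$. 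The main obstacle I anticipate is precisely this quadraticity step: weak closure follows cleanly from submodularity, but extracting the quadratic condition requires translating the numerical maximality of $j_E(V)$ into the algebraic orthogonality $(g-1)(h-1)\in I$, and the cleanest argument will likely proceed by contradiction, producing from a non-quadratic generator either a larger equal-measure offender (violating maximality) or a failure of the equality clause in Lemma~\ref{lemma:MfS} that is incompatible with $j_E(V) = j_0$. Getting this reduction exactly right—ensuring the offender condition $j_{E'}(V) \geq 1$ is preserved throughout and that all constructed subgroups genuinely remain within $\langle D^G \rangle$—is where the care is needed.
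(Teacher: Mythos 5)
There is a genuine gap, and it is exactly the one you flag yourself: the quadraticity of $E$. Your weak-closure step is fine --- taking $E$ maximal by inclusion among offenders of measure $j_0$ inside $\langle D^G\rangle$ and using Lemma~\ref{lemma:MfS} to show that $EE^g$ would again be an offender of measure $j_0$ does yield a weakly closed $E$, and is if anything more direct than the paper's argument. But nothing in this construction makes $E$ quadratic, and the numerical datum $j_E(V)=j_0$ does not by itself encode the algebraic condition $(g-1)(h-1)\in I$: a maximal offender of maximal measure has no reason to be quadratic, and the equality clause of Lemma~\ref{lemma:MfS} only controls $C_V$ of intersections of subgroups you have already produced, not products of the form $(g-1)(h-1)$ for $g,h$ in a single $E$. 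The missing idea is Timmesfeld's replacement theorem (in the form of Theorem~4.1 of~\cite{newol}): the paper starts from a subgroup $C\leq D$ that is \emph{minimal} by inclusion among offenders with $j_C(V)=j_0$, and the replacement theorem then says such a $C$ is automatically a quadratic offender. Quadraticity comes from minimality plus this external input, not from maximality.

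This choice of minimal rather than maximal offender then forces the shape of the rest of the paper's proof, and shows why your two steps cannot simply be welded together. When $j_0>1$, minimality of $C$ gives $j_{C\cap C^g}(V)<j_0$ strictly for any commuting conjugate $C^g\neq C$, and Lemma~\ref{lemma:MfS} yields $j_0^2 > j_{\langle C,C^g\rangle}(V)\,j_{C\cap C^g}(V)\geq j_0^2$, a contradiction; so the minimal quadratic offender is already weakly closed. When $j_0=1$ this fails (the trivial intersection has $j=1=j_0$), and the paper instead takes $E=\langle C^g\mid g\in T\rangle$ for a maximal set $T$ of commuting conjugates, using the equality case of Lemma~\ref{lemma:MfS} together with $C\cap C^g=1$ to get $C_V(C)+C_V(C^g)=V$, hence $[V,C,C^g]=0$, which is what transfers quadraticity from the individual conjugates $C^g$ to their product $E$. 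If you wanted to salvage your plan by restricting your poset to \emph{quadratic} offenders of measure $j_0$, you would face precisely this last problem: showing that $EE^g$ is again quadratic, which is the content of the $j_0=1$ analysis and is not automatic.
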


\begin{proof}
Let $D$ be an offender with $j_D(V) = j_0$.
Then $D$~has a subgroup $C \leq D$ which is minimal by inclusion amongst the
offenders with $j_C(V) = j_0$.
By maximality of~$j_0$, the version of Timmesfeld's replacement theorem in
\cite[Theorem 4.1]{newol} then tells us that~$C$ is a quadratic offender.

Suppose first that $j_0 > 1$. We shall show that $C$~is weakly closed.
If not, then $A := \langle C, C^g \rangle$ is
elementary abelian for some $g \in G$ with $C^g \neq C$.
Then $j_A(V) \leq j_0$ by maximality of~$j_0$. And
since $j_0>1 = j_1(V)$, we have that
$j_{C \cap C^g}(V) < j_0$ by maximality of~$j_0$
and minimality of~$C$.
But by Lemma~\ref{lemma:MfS},
this means that
\[
j_0^2 > j_A(V) j_{C \cap C^g}(V) \geq j_C(V) j_{C^g}(V) = j_0^2
\, ,
\]
a contradiction. So $E = C \leq D$ has the required properties.

Now suppose that $j_0 = 1$.
Let $g \in G$ be such that $C^g \neq C$ and $[C,C^g]=1$.
As $\langle C,C^g \rangle$ and $C \cap C^g$ both
have $j \leq 1$, Lemma~\ref{lemma:MfS} means that
both have $j = 1$,
and (by equality) $C_V(C \cap C^g) = C_V(C) + C_V(C^g)$. Furthermore,
minimality of~$C$ means that $C \cap C^g = 1$, and so
$C_V(C) + C_V(C^g) = V$.
As in the proof of \cite[Lemma 4.3]{newol}, this means that $[V,C,C^g]=0$.
Let $T \subseteq G$ be a subset maximal
with respect to the condition that $E := \langle C^g \mid g \in T\rangle$ is
abelian (and therefore elementary abelian)\@.
Applying the above argument to any $g,h \in T$ with $C^g \neq C^h$ we deduce
that $[V,C^g,C^h] = 0$. So since each $C^g$ is quadratic, we deduce that~$E$
is quadratic too. Finally, a repeated application of Lemma~\ref{lemma:MfS}
coupled with the fact that $j$ never
exceeds~$1$ tells us that $j_E(V)=1$ too.
But by construction, $E$ is weakly closed and contained in the normal
closure of~$D$.
\end{proof}

\ignore{
\begin{lemma}
\label{lemma:rankJammedQuadratic}
Suppose that $p$ is odd, and that $(G,V)$ is a counterexample to Conjecture~\ref{conj:GHL}\@.
Then every weakly closed quadratic offender has rank at least~$p$.
\end{lemma}

\begin{proof}
By \cite[Lemma~8.2]{newol}, any offender~$E$ must have rank at least $p-1$;
and part~(2) of that lemma says that if $E$ has rank $p-1$ then it is
quadratic; its normal closure $D$~in $N_G(N_G(E))$ is elementary abelian;
and $D$ strictly contains $E$. So $E$ cannot be weakly closed.
\end{proof}
} 

\noindent
It follows from Proposition~\ref{prop:jammed} that Conjecture~\ref{conj:Y2}
holds for $(G,V)$ if the following conjecture does.

\begin{conjecture}
\label{conj:weakClosure}
Let $p$~be a prime, $G$ a finite $p$-group, and $V$ a faithful
$\f G$-module. If there is an elementary abelian subgroup
$1 \neq E \leq G$ which is both quadratic on~$V$ and weakly closed in
$C_G(E)$ with respect to~$G$,
then there are quadratic elements in $\Omega_1(Z(G))$.
\end{conjecture}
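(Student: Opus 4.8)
The plan is to produce a nontrivial commutator that is simultaneously central and quadratic, and then invoke Lemma~\ref{lemma:GHL}: that lemma turns a commutator $[B,A]$ of a quadratic element $B$ into a quadratic element, provided the commutator is a nontrivial element of $C_G(A,B)$. A \emph{central} commutator automatically meets the centralizing condition, so the entire difficulty is to manufacture a nontrivial central commutator out of the given quadratic element. Since $V$ is faithful, every quadratic element has order~$p$, so any central quadratic element automatically lies in $\Omega_1(Z(G))$, which is exactly the desired conclusion. If $E \leq Z(G)$ we are already done, because then each nontrivial element of~$E$ is a central quadratic element of order~$p$; so from now on I assume $E \nleq Z(G)$, equivalently $C_G(E) \lneq G$.

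Next I would exploit weak closure. As $E$ is abelian and weakly closed, Remark~\ref{rk:Z2NG} gives $Z_2(G) \leq N_G(E)$. For $g \in E$ and $z \in Z_2(G)$ we then have $g^z \in E$, whence $[g,z] \in E$; and $z \in Z_2(G)$ forces $[g,z] \in Z(G)$, so that
\[
[E, Z_2(G)] \leq E \cap Z(G) \, .
\]
This settles at once the favourable case $[E,Z_2(G)] \neq 1$: choosing $g \in E$ and $z \in Z_2(G)$ with $c := [g,z] \neq 1$, the element $c$ is a nontrivial element of $Z(G) \leq C_G(z,g)$, while $g$ is quadratic, so Lemma~\ref{lemma:GHL} (with $B = g$, $A = z$) shows that~$c$ is quadratic. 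As noted, $c$ then has order~$p$, so $c \in \Omega_1(Z(G))$.

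The remaining, and genuinely hard, case is $[E,Z_2(G)] = 1$, that is $E \leq C_G(Z_2(G))$. Here the argument above collapses: $C_G(Z_2(G))$ is normal in~$G$, so the whole normal closure $\langle E^G\rangle$ also centralises $Z_2(G)$, and no conjugate of~$E$ yields a nonzero commutator into $Z(G)$. In this situation I would attempt a descent along the upper central series. Fixing $g \in E \setminus Z(G)$ (necessarily quadratic) and writing $r \geq 2$ for the least integer with $g \in Z_r(G)$, one can find $x$ with $1 \neq [g,x] \in Z_{r-1}(G)$, and one would like to apply Lemma~\ref{lemma:GHL} repeatedly to push a quadratic element down through $Z_{r-1}(G), \ldots, Z_1(G) = Z(G)$.

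The main obstacle is precisely the hypothesis of Lemma~\ref{lemma:GHL}: at each step the commutator $[g,x]$ must centralise both~$g$ and~$x$, which is not automatic once one leaves the ground floor, and the commutators one naively produces need not have this property. Overcoming it requires a careful choice of commutators — this is exactly what the deepest and locally deepest commutators of~\S\ref{section:deepest} are designed to control — and even then one must ensure that the descent terminates at a \emph{self}-orthogonal (hence quadratic) central element, rather than merely at an element orthogonal to the starting quadratic, which is all that the descent in Lemma~\ref{lemma:orthogCent} delivers. I expect this deep case to be the crux, and indeed it is the reason the statement is posed as a conjecture rather than proved in general.
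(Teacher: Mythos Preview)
The statement you are attempting is Conjecture~\ref{conj:weakClosure}, and the paper does \emph{not} prove it in general: it is offered as a strengthening of Conjecture~\ref{conj:Y2}, and is established only for particular classes of groups (direct products of iterated wreath products, via Hypothesis~\ref{hypo:experiment} and Lemma~\ref{lemma:exper2}). So there is no ``paper's own proof'' to compare against, and your closing remark that ``the statement is posed as a conjecture rather than proved in general'' is entirely accurate.

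Your partial argument is sound as far as it goes. The case $E \leq Z(G)$ is immediate, and the case $[E,Z_2(G)] \neq 1$ is handled correctly: weak closure gives $Z_2(G) \leq N_G(E)$ by Remark~\ref{rk:Z2NG}, so any nontrivial $[g,z]$ with $g \in E$ and $z \in Z_2(G)$ lies in $E \cap Z(G)$, and Lemma~\ref{lemma:GHL} applies cleanly because this commutator is central. You are also right that the residual case $E \leq C_G(Z_2(G))$ is the genuine obstruction, and your diagnosis of why the descent stalls --- Lemma~\ref{lemma:GHL} requires the commutator to centralise both factors, while Lemma~\ref{lemma:orthogCent} delivers only orthogonality to a fixed element rather than self-orthogonality --- is on target. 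The paper does not resolve this case either; instead it circumvents it by imposing the much stronger Hypothesis~\ref{hypo:experiment} on the individual factors and then propagating that through wreath products.
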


\noindent
We establish Theorem~\ref{thm:ol3main} by demonstrating that the Sylow
subgroups of the symmetric groups satisfy Conjecture~\ref{conj:weakClosure}\@.

\section{An inductive hypothesis}
\label{section:hypothesis}

\noindent
We now present the inductive hypothesis (Hypothesis~\ref{hypo:experiment}) that will be used
in Section~\ref{section:wreath} to verify Conjecture~\ref{conj:Y2} for an iterated wreath product.
First though, we need a few auxilliary lemmas.

\begin{lemma}
\label{lemma:preExp}
Suppose that $G$ is a direct product of the form $G = H \times P$,
where $H$~and $P$ are $p$-groups. Let $E \leq G$ be an elementary abelian
subgroup which is weakly closed (in $C_G(E)$ with respect to~$G$)\@.
Set
\[
F = \{g \in P \mid \exists h \in H \; (h,g) \in E\} \leq P \, ,
\]
and set $N = N_P(F)$. 
Then the following hold:
\begin{enumerate}
\item
\label{enum:preExp-1}
$F$ is weakly closed (in $C_P(F)$ with respect to~$P$).
\item
\label{enum:preExp-2}
$1 \times [F,N] \leq E$.
\item
\label{enum:preExp-3}
If $E \nleq H \times Z(P)$, then $[F,N] \neq 1$ and therefore $E \cap (1 \times P) \neq 1$.
\end{enumerate}
\end{lemma}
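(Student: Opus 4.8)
The unifying device in my plan is the projection $\pi_P\colon G=H\times P\to P$, under which $F=\pi_P(E)$ is visibly a subgroup; since $E$ is abelian, both $F=\pi_P(E)$ and $\pi_H(E)$ are abelian, because a commutator in a direct product is trivial exactly when it is trivial in each factor. The core idea is to transport weak-closure information about $E$ inside $G$ down to $F$ inside $P$: given $n\in P$, I would lift it to $(1,n)\in G$ and compare $E$ with the conjugate $E^{(1,n)}$, using throughout the componentwise formula $[(h_1,x_1),(h_2,x_2)]=([h_1,h_2],[x_1,x_2])$.

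For \enref{preExp-1}, suppose $g\in P$ with $[F,F^g]=1$; the goal is $F^g=F$. I would compute $[E,E^{(1,g)}]$ componentwise: the $H$-component vanishes since $\pi_H(E)$ is abelian, and the $P$-component $[x_1,x_2^g]$ vanishes since $x_1\in F$, $x_2^g\in F^g$ and $[F,F^g]=1$. Thus $[E,E^{(1,g)}]=1$, so weak closure of $E$ in $G$ gives $E^{(1,g)}=E$, and applying $\pi_P$ yields $F^g=F$.

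For \enref{preExp-2}, fix $f\in F$ and $n\in N=N_P(F)$ and choose $h\in H$ with $(h,f)\in E$. Since $n$ normalises the abelian group $F$, the same componentwise computation (now using $x_2^n\in F^n=F$) shows $[E,E^{(1,n)}]=1$, hence $E^{(1,n)}=E$ by weak closure. Then $(h,f^n)=(h,f)^{(1,n)}\in E$, so $(1,[f,n])=(h,f)^{-1}(h,f^n)\in E$; as such elements generate $1\times[F,N]$, I conclude $1\times[F,N]\leq E$.

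For \enref{preExp-3}, the final implication is immediate from \enref{preExp-2}: if $[F,N]\neq1$ then $1\times[F,N]\leq E$ forces $E\cap(1\times P)\neq1$. The real content is the contrapositive of the first assertion, and this is the step I expect to be the main obstacle. Assuming $[F,N]=1$ I would argue as follows. Then $N_P(F)\leq C_P(F)$, and as the reverse inclusion always holds, $N_P(F)=C_P(F)$. Now I would feed in \enref{preExp-1}: for any $m\in N_P(N_P(F))$ we have $F^m\leq N_P(F)^m=N_P(F)=C_P(F)$, so $[F,F^m]=1$, and weak closure of $F$ forces $F^m=F$, i.e.\ $m\in N_P(F)$. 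Hence $N_P(F)$ is self-normalising in the $p$-group $P$ and therefore equals $P$; so $F\trianglelefteq P$ and $[F,P]=[F,N_P(F)]=1$, giving $F\leq Z(P)$ and finally $E\leq H\times F\leq H\times Z(P)$. The delicate point is precisely this passage from the local equality $N_P(F)=C_P(F)$ to the global conclusion $F\leq Z(P)$, where weak closure of $F$ (from \enref{preExp-1}) and the nilpotence of $P$ have to be combined.
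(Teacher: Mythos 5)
Your proposal is correct and takes essentially the same approach as the paper: parts \enref{preExp-1} and \enref{preExp-2} proceed exactly as in the paper, by conjugating $E$ by $(1,g)$ resp.\ $(1,n)$ and invoking weak closure of~$E$. The only difference is in part \enref{preExp-3}, where your self-normalising-subgroup argument is a correct inline reproof (in contrapositive form) of Lemma~\ref{lemma:jammedNorm}, which the paper simply cites at that point.
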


\begin{proof}
\begin{enumerate}
\item
If $x \in P$ and $F^x \neq F$ then $E^{(1,x)} \neq E$ and therefore
$[E,E^{(1,x)}] \neq 1$. But $[E,E^{(1,x)}] = 1 \times [F,F^x]$. So
$[F,F^x] \neq 1$.
\item
Let $f \in F$ and $n \in N$. Pick $h \in H$ such that $(h,f) \in E$. Since
$[F,F^n] = [F,F]=1$, we have $[E,E^{(1,n)}] = 1$ by the proof of the first part.
Hence $E^{(1,n)} = E$, and therefore $(1,[f,n]) = [(h,f),(1,n)] \in E$.
\item
$F$ is weakly closed by the first part, and non-central by assumption. 
Hence $1 \neq [F,N] \leq F$ by Lemma~\ref{lemma:jammedNorm}\@. Done by the second part.
\qedhere
\end{enumerate}
\end{proof}

\begin{hypothesis}
\label{hypo:experiment}
Let $P$ be a $p$-group with the following properties:
\begin{itemize}
\item
$P$ is nonabelian with cyclic centre.
\item
Suppose that $H$ is a $p$-group and that $V$ is a faithful $\f G$-module,
where $G = H \times P$.
If $E$ is a weakly closed quadratic elementary abelian subgroup
of~$G$ which satisfies $E \nleq H \times Z(P)$,
then the subgroup $1 \times \Omega_1(Z(P))$ of $Z(G)$ is quadratic.
\end{itemize}
\end{hypothesis}

\noindent
At the end of this section (Corollary~\ref{coroll:exper1}) we give some
first examples of groups that satisfy this hypothesis. In the next section
(Corollary~\ref{coroll:exper}) we shall show that the Sylow $p$-subgroups
of $S_{p^n}$ also satisfy it.
First though we explain the significance of the hypothesis for
Oliver's conjecture.

\begin{lemma}
\label{lemma:exper2}
Suppose that the finite $p$-group $G$ is a direct product
$G = \prod_{r=1}^n H_r$, where each $H_r$ is either abelian or
satisfies Hypothesis~\ref{hypo:experiment}\@.
Then $G$ satisfies Conjectures \ref{conj:Y2}~and \ref{conj:weakClosure}\@.
\end{lemma}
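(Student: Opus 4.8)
The plan is to reduce everything to Conjecture~\ref{conj:weakClosure}, since the discussion following Proposition~\ref{prop:jammed} already records that Conjecture~\ref{conj:Y2} holds for a pair $(G,V)$ whenever Conjecture~\ref{conj:weakClosure} does. Thus it suffices to prove Conjecture~\ref{conj:weakClosure} for $G = \prod_{r=1}^n H_r$. So I would fix a faithful $\f G$-module $V$ together with a nontrivial elementary abelian subgroup $E \leq G$ that is quadratic on~$V$ and weakly closed in $C_G(E)$ with respect to~$G$, and aim to produce quadratic elements in $\Omega_1(Z(G))$.

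First I would dispose of the case $E \leq Z(G)$: here $E$ is elementary abelian and central, so $E \leq \Omega_1(Z(G))$, and since $E$ is quadratic every nontrivial element of~$E$ is quadratic by Corollary~\ref{coroll:quadOrthog}. These are already the required elements. The main case is therefore $E \nleq Z(G)$. Writing $Z(G) = \prod_{r=1}^n Z(H_r)$ and projecting, I would observe that $E \nleq Z(G)$ forces some index~$r$ for which the projection of~$E$ to~$H_r$ is not contained in~$Z(H_r)$. Set $P = H_r$ and $H = \prod_{s \neq r} H_s$, so that $G = H \times P$ and $E \nleq H \times Z(P)$.

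Since an abelian~$H_r$ would satisfy $Z(H_r) = H_r$, and hence $E \leq H \times Z(P) = G$, contrary to the choice of~$r$, the factor $P = H_r$ must be nonabelian; by the hypothesis of the lemma it therefore satisfies Hypothesis~\ref{hypo:experiment}. Applying that hypothesis with this $H$ and~$P$ --- using precisely that $E$ is a weakly closed quadratic elementary abelian subgroup of $G = H \times P$ with $E \nleq H \times Z(P)$ --- yields that $1 \times \Omega_1(Z(P))$ is quadratic. As $P$ is nonabelian with cyclic centre, this subgroup is nontrivial, and it lies in $\Omega_1(Z(G))$. This completes the verification of Conjecture~\ref{conj:weakClosure} for~$G$, and Conjecture~\ref{conj:Y2} follows from Proposition~\ref{prop:jammed}.

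I do not anticipate a serious obstacle: the argument is essentially bookkeeping with the direct-product decomposition of the centre, matched against the hypotheses verbatim. The one point requiring care is checking that the subgroup~$E$ supplied to us --- weakly closed in $C_G(E)$ with respect to~$G$ --- is exactly the kind of subgroup that Hypothesis~\ref{hypo:experiment} consumes, and that the factor singled out by the failure of centrality is genuinely nonabelian so that the hypothesis applies. Both become immediate once the projection argument pins down the correct index~$r$.
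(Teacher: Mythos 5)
Your proposal is correct and follows essentially the same route as the paper's own proof: reduce to Conjecture~\ref{conj:weakClosure} via Proposition~\ref{prop:jammed}, split on whether $E \leq Z(G)$, and in the noncentral case locate an index~$r$ with $E \nleq K_r \times Z(H_r)$ so that Hypothesis~\ref{hypo:experiment} applies to the nonabelian factor~$H_r$. No gaps.
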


\begin{proof}
For $1 \leq r \leq n$, we shall denote
by~$K_r$ the product $\prod_{i \neq r} H_i$. Hence $G = K_r \times H_r$ for each~$r$.
Note that $Z(G) = \prod_r Z(H_r)$.
\par
Conjecture~\ref{conj:weakClosure} implies Conjecture~\ref{conj:Y2}
by Proposition~\ref{prop:jammed}\@, and so we assume that there
is a weakly closed quadratic elementary abelian subgroup $E$~of $G$ with $E \neq 1$.
If $E \leq Z(G)$ then every element $1 \neq g \in E$ is a quadratic element of $\Omega_1(Z(G))$.
If $E \nleq Z(G) = \prod_r Z(H_r)$ then for some $1 \leq r \leq n$ we have
$E \nleq K_r \times Z(H_r)$.
It follows that $H_r$ cannot be abelian, and so by assumption it satisfies
Hypothesis~\ref{hypo:experiment}: which means that the subgroup
$1 \times \Omega_1(Z(H_r))$ of $\Omega_1(Z(G))$ is quadratic.
\end{proof}

\noindent
We are particularly interested in wreath products of the form
$G = P \wr C_p$, where $P$ is a $p$-group.
Recall that this means that $G$~is the semidirect product
$G = P^p \rtimes C_p$, where the $C_p$ on top acts on the base
$P^p = \prod_1^p P$ by permuting the factors cyclically. In particular,
this means that we may view $P^p$ as a subgroup of~$G$.
\par
We start with a minor diversion.
The following result generalizes \cite[Satz III.15.4a)]{Huppert:I}
slightly and is presumably well known. It is needed for the proof of
Theorem~\ref{thm:GLn}\@.

\begin{lemma}
\label{lemma:Jwreath}
Suppose that $p$ is an odd prime and
that $P$~is a $p$-group such that $J(P)$ is elementary abelian.
Then $J(P \wr C_p)$ is elementary abelian too. In particular,
if $P \neq 1$ then
$J(P \wr C_p)$ is the copy of $J(P)^p$ in the base subgroup
$P^p \leq P \wr C_p$.
\end{lemma}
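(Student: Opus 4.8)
The plan is to compare the Thompson subgroup of $G := P \wr C_p = P^p \rtimes C_p$ with the visibly elementary abelian subgroup $J(P)^p$ sitting inside the base $P^p$. Write $d$ for the greatest rank of an elementary abelian subgroup of~$P$. Since $J(P)$ is by hypothesis elementary abelian and is generated by all elementary abelian subgroups of rank~$d$, it has rank exactly~$d$ and contains every one of them; hence $J(P)$ is the \emph{unique} elementary abelian subgroup of~$P$ of maximal rank~$d$. First I would record the analogue for the base: if $E \leq P^p$ is elementary abelian then each coordinate projection $\pi_i(E) \leq P$ is elementary abelian, and $E \leq \pi_1(E) \times \cdots \times \pi_p(E)$ forces $\rank(E) \leq pd$; moreover if $\rank(E) = pd$ then each $\pi_i(E)$ has rank~$d$, hence equals $J(P)$, and $E = \pi_1(E) \times \cdots \times \pi_p(E) = J(P)^p$. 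Thus $J(P)^p$ is the unique elementary abelian subgroup of $P^p$ of the maximal rank~$pd$.

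The heart of the argument is to bound the size of an elementary abelian subgroup $E \leq G$ that is \emph{not} contained in the base. Such an $E$ contains an element $u = w t^k$ with $w \in P^p$ and $t^k$ a generator of the top $C_p$; after relabelling the generator we may take $u = wt$, with $t$ acting on the coordinates as a $p$-cycle. Set $B := E \cap P^p$. Then $E/B$ embeds in $C_p$ and is nontrivial, so $B$ has index~$p$ in~$E$ and $\rank(E) = \rank(B) + 1$. Since $E$ is abelian we have $B \leq C_{P^p}(u)$, so it suffices to bound the elementary abelian rank of this centralizer.

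I would then carry out the centralizer computation, which is the main obstacle. The projection $C_{P^p}(wt) \to P$ onto the first coordinate is injective, because the condition of commuting with $wt$ determines each coordinate of a centralizing element from the previous one; going once around the $p$-cycle, the consistency condition says that the first coordinate must centralize the ``norm'' $\nu := w_1 w_2 \cdots w_p \in P$. Hence the first-coordinate projection identifies $C_{P^p}(wt)$ with the subgroup $C_P(\nu)$ of~$P$. In particular $C_{P^p}(wt)$ is isomorphic to a subgroup of~$P$, so every elementary abelian subgroup of it --- in particular $B$ --- has rank at most~$d$. Therefore $\rank(E) \leq d+1$ whenever $E \nleq P^p$.

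Finally I would assemble the bounds. The subgroup $J(P)^p$ exhibits elementary abelian subgroups of $G$ of rank~$pd$, so the greatest rank is at least~$pd$. As $p$ is odd and (for $P \neq 1$) $d \geq 1$, we have the strict inequality $d + 1 < pd$; this is exactly where oddness is used, since for $p=2$, $d=1$ it fails, as the example $P=C_2$, $P \wr C_2 = D_8$ shows. Consequently no elementary abelian subgroup escaping the base can attain the maximal rank~$pd$, so every elementary abelian subgroup of~$G$ of greatest rank lies in $P^p$ and hence equals $J(P)^p$ by the uniqueness established in the first paragraph. Therefore $J(G) = J(P)^p$, which is elementary abelian, giving both assertions; and when $P = 1$ the group $G = C_p$ is itself elementary abelian.
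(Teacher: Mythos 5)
Your proof is correct and takes essentially the same route as the paper's: both reduce to showing that an elementary abelian subgroup $E \nleq P^p$ has rank at most $r+1$ (where $r$ is the rank of $J(P)$), via the fact that the first-coordinate projection is injective on the centralizer in $P^p$ of an element outside the base, and then use that $r+1 < pr$ for $p$ odd and $r \geq 1$. Your explicit identification of $C_{P^p}(wt)$ with $C_P(w_1 w_2 \cdots w_p)$ is just a sharper packaging of the paper's observation that no non-trivial element of $P^p$ with trivial first coordinate can commute with an element of $(P \wr C_p) \setminus P^p$.
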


\begin{proof}
If $P = 1$ then $P \wr C_p = C_p$ is elementary abelian and we are done.
So we may assume that $P \neq 1$, which means that $r \geq 1$, where
$r$~is the $p$-rank of~$P$. Note that $J(P) \cong C_p^r$ by assumption.
\par
Consider the base subgroup $P^p$, which has index~$p$ in
$P \wr C_p = P^p \rtimes C_p$. Since $J(G_1 \times G_2) = J(G_1) \times J(G_2)$,
we see that $J(P^p) = J(P)^p$, which is elementary abelian of rank~$pr$.
So if $J(P \wr C_p)$ is not elementary abelian then $P^p \rtimes C_p$
must contain an elementary abelian subgroup $E$ of rank${}\geq pr$ with
$E \nleq P^p$. We set $E' = E \cap P^p$. Since $P^p$ has index~$p$ in
$P \wr C_p$, it follows that $\abs{E\colon E'} = p$ and therefore that
$E' \leq P^p$ is elementary abelian of rank${}\geq pr-1$.
Furthermore there is an element $g \in (P \wr C_p) \setminus P^p$
with $E = \langle E', g \rangle$ and therefore $[E',g] = 1$.
\par
We split $P^p$ as $P^p = A \times B$, where
$A,B \leq P^p$ are given by
\begin{xalignat*}{2}
A & = \{(a_1,1,\ldots,1) \mid a_1 \in P \} &
B & = \{(1,a_2,\ldots,a_p) \mid a_i \in P \} \, .
\end{xalignat*}
Note that $E' \cap B$ must be trivial: for
no non-trivial element of $B$ can commute with
$g \in (P \wr C_p) \setminus P^p$. Hence the projection of $E'$ onto~$A$ is
injective. As $A \cong P$ has rank~$r$, this means that $pr-1 \leq r$. This is
impossible with $p$~odd and $r \geq 1$.
\end{proof}

\noindent
Now we derive a property of wreath products which will
be useful for verifying the hypothesis.

\begin{lemma}
\label{lemma:preExp2}
Suppose that $G$ is the wreath product $P \wr C_p$, where $P$ is a finite $p$-group. Suppose that
$E$ is a weakly closed elementary abelian subgroup of~$G$ and that
either of the following properties is
satisfied:
\begin{enumerate}
\item
\label{enum:preExp2-1}
$E$ is not contained in the base subgroup $P^p$~of $G$.
\item
\label{enum:preExp2-2}
$Z(P)$ is cyclic, $E$ is non-central, and $E$ lies in an abelian normal subgroup of~$G$.
\end{enumerate}
Then $\Omega_1(Z(G)) \leq [E,N_G(E)]$.
\end{lemma}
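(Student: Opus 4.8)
The plan is to combine the explicit description of the centre of the wreath product with the fact that weak closure forces $Z_2(G)$ into $N_G(E)$. Writing elements of $G=P^p\rtimes\langle\sigma\rangle$ as $\mathbf{a}\sigma^k$ with $\mathbf{a}=(a_1,\dots,a_p)\in P^p$, I would first record that, since $\langle\sigma\rangle$ permutes the factors by a $p$-cycle, the centre is the diagonal $Z(G)=\{(z,\dots,z):z\in Z(P)\}$, so that $\Omega_1(Z(G))$ is the diagonal copy of $\Omega_1(Z(P))$. Because $E$ is abelian and weakly closed, Remark~\ref{rk:Z2NG} gives $Z_2(G)\leq N_G(E)$. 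Note also that, as $E\trianglelefteq N_G(E)$ is abelian, $[E,N_G(E)]\leq E$; thus the asserted inclusion automatically forces $\Omega_1(Z(G))\leq E$, and the real task is to realise each diagonal central element as a commutator $[e,n]$ with $e\in E$ and $n\in N_G(E)$. I treat the two cases separately.

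For case~\enref{preExp2-1} I would exhibit the normalizing elements explicitly. For $t\in\Omega_1(Z(P))$ set $\mathbf{b}_t=(1,t,t^2,\dots,t^{p-1})\in P^p$. Since each coordinate lies in $Z(P)$, the element $\mathbf{b}_t$ centralizes the base $P^p$, while a direct shift computation gives $[\sigma^k,\mathbf{b}_t]=(t^{-k},\dots,t^{-k})$, which is central; hence $\mathbf{b}_t\in Z_2(G)\leq N_G(E)$. Now choose $g=\mathbf{a}\sigma^k\in E$ with $k\not\equiv 0\pmod p$, which exists precisely because $E\nleq P^p$. Using $[\mathbf{a}\sigma^k,\mathbf{b}_t]=[\mathbf{a},\mathbf{b}_t]^{\sigma^k}[\sigma^k,\mathbf{b}_t]$ together with $[\mathbf{a},\mathbf{b}_t]=1$ (again because $t$ is central in $P$), one obtains $[g,\mathbf{b}_t]=(t^{-k},\dots,t^{-k})$. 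As $t$ runs through $\Omega_1(Z(P))$ and $k$ is invertible modulo~$p$, these commutators run through all of $\Omega_1(Z(G))$, which therefore lies in $[E,N_G(E)]$.

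Case~\enref{preExp2-2} I expect to be shorter, via a structural shortcut: if $E$ lies in an abelian normal subgroup $A\trianglelefteq G$, then for every $g\in G$ we have $E^g\leq A$, whence $[E,E^g]\leq[A,A]=1$; weak closure then forces $E^g=E$, so that $E\trianglelefteq G$ and $N_G(E)=G$. Since $E$ is non-central, $[E,G]$ is a non-trivial normal subgroup of the $p$-group $G$, and so meets $Z(G)$ non-trivially. As $Z(P)$, and hence $Z(G)$, is cyclic, any non-trivial subgroup of $Z(G)$ contains its unique subgroup of order~$p$, which is $\Omega_1(Z(G))$; thus $\Omega_1(Z(G))\leq[E,G]\cap Z(G)\leq[E,N_G(E)]$.

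The main obstacle is the bookkeeping in case~\enref{preExp2-1}: one must choose the normalizing family $\mathbf{b}_t$ correctly, verify that it lands in $Z_2(G)$, and check that the resulting commutators sweep out the \emph{entire} diagonal $\Omega_1(Z(G))$ rather than a proper subgroup, the invertibility of $k$ modulo~$p$ being exactly what guarantees this surjectivity. By contrast, case~\enref{preExp2-2} should be essentially formal once one observes that weak closure inside an abelian normal subgroup upgrades $E$ to a normal subgroup of~$G$, after which only the cyclicity of $Z(G)$ and the standard fact that a non-trivial normal subgroup of a $p$-group meets the centre are needed.
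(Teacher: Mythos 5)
Your proposal is correct and follows essentially the same route as the paper: in case~(1) you use the element $(1,t,t^2,\ldots,t^{p-1})\in Z_2(G)$ together with Remark~\ref{rk:Z2NG} to normalize $E$ and produce the diagonal central elements as commutators, and in case~(2) you upgrade $E$ to a normal subgroup via weak closure and invoke cyclicity of $Z(G)$. The only cosmetic difference is that you let $t$ vary (using invertibility of $k$ mod $p$) where the paper replaces $g$ by a suitable power to normalize the exponent; these are equivalent.
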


\begin{proof}
The group $\Omega_1 (Z(G))$ is cyclic of order~$p$. It is generated by
$(z,\ldots,z) \in P^p$ for any element
$z \in \Omega_1(Z(P))$ with $z \neq 1$.
\par
Suppose first that~\enref{preExp2-2} holds.
As $E$ lies in an abelian normal subgroup of~$G$ and is weakly closed,
we deduce
that $E \trianglelefteq G$. So $[E,N_G(E)] = [E,G]$ is normal too. Moreover,
$[E,G]$ is nontrivial, because $E$ is non-central.
Since $\abs{\Omega_1(Z(G))}=p$, it follows that $\Omega_1(Z(G)) \leq [E,G] = [E,N_G(E)]$.
\par
Now suppose that~\enref{preExp2-1} holds.
Pick an element $z \in \Omega_1(Z(P))$ with $z \neq 1$.
It suffices to show that $(z,\ldots,z) \in [E,N_G(E)]$.
We start by showing that
$c = (1,z,z^2,\ldots,z^{p-1})$ satisfies $c \in Z_2(G)$.
\par
If $g \in P^p$ then $[g,c]=1$, since $c \in Z(P^p)$.
If $g \in G \setminus P^p$ then $g = h\sigma$, where
$h \in P^p$ and $\sigma$ is a cyclic permutation of the $p$ factors of~$P^p$\@.
Hence
\[
[g,c] =[\sigma,c] = (c^{\sigma})^{-1} c = (z,z,\ldots,z)^r \in Z(G) \quad
\text{for some $1 \leq r \leq p-1$.}
\]
Therefore $c \in Z_2(G)$ as claimed.
By Remark~\ref{rk:Z2NG} if follows that $c \in N_G(E)$\@.
\par
Now by assumption $E \nleq P^p$, and so
$E$~contains some $g =h\sigma$ in $G \setminus P^p$. So replacing $g$ by
a suitable power if necessary, we may suppose that $r=1$, and so
$(z,\ldots,z) = [g,c] \in [E,c] \leq [E,N_G(E)]$.
\end{proof}

\noindent
The following lemma gives one way of verifying that the hypothesis
is satisfied.

\begin{lemma}
\label{lemma:exper1}
Let $P$ be a non-abelian $p$-group with cyclic centre.
Suppose that for every non-central
weakly closed elementary abelian subgroup $F$~of $P$ we have that
$[F,N_P(F)] \cap Z_2(P) \neq 1$.
Then $P$ satisfies Hypothesis~\ref{hypo:experiment}\@.
\end{lemma}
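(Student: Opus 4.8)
The plan is to verify the two bullet points of Hypothesis~\ref{hypo:experiment}. The first (that $P$ is nonabelian with cyclic centre) holds by assumption, so I concentrate on the second. Thus let $H$ be a $p$-group, let $V$ be a faithful $\f G$-module with $G = H \times P$, and let $E \leq G$ be a weakly closed quadratic elementary abelian subgroup with $E \nleq H \times Z(P)$. I would feed this $E$ directly into Lemma~\ref{lemma:preExp}: with $F = \{g \in P \mid \exists h \in H,\ (h,g) \in E\}$ and $N = N_P(F)$, that lemma yields that $F$ is weakly closed in~$P$ \enref{preExp-1}, that $1 \times [F,N] \leq E$ \enref{preExp-2}, and, since $E \nleq H \times Z(P)$, that $[F,N] \neq 1$ \enref{preExp-3}. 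As $F$ is the image of the elementary abelian group $E$ under the projection $G \to P$, it is itself elementary abelian, and $[F,N] \neq 1$ forces $F \nleq Z(P)$. Hence $F$ is a non-central weakly closed elementary abelian subgroup of~$P$, exactly the objects to which the standing hypothesis of the lemma applies.

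Applying that hypothesis to~$F$ gives $[F,N_P(F)] \cap Z_2(P) \neq 1$; I would fix a nontrivial element $y \in [F,N] \cap Z_2(P)$. Since $1 \times [F,N] \leq E$ and $E$ is quadratic, the nontrivial element $(1,y) \in E$ is quadratic. The target is a quadratic element generating $1 \times \Omega_1(Z(P))$: because $Z(P)$ is cyclic, $\Omega_1(Z(P))$ has order~$p$, so exhibiting a single quadratic element of $1 \times \Omega_1(Z(P))$ suffices by Corollary~\ref{coroll:quadOrthog}.

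I would then split on whether $y$ is central. If $y \in Z(P)$, then $(1,y)$ is a quadratic element of the faithful module~$V$, hence has order~$p$, so $y$ generates $\Omega_1(Z(P))$ and we are done. If $y \notin Z(P)$, then since $y \in Z_2(P)$ there is an $x \in P$ with $z := [y,x] \in Z(P) \setminus \{1\}$. Putting $B = (1,y)$ and $A = (1,x)$ gives $[B,A] = (1,z)$, a nontrivial element of $1 \times Z(P) \leq Z(G) \leq C_G(A,B)$. As $B$ is quadratic, Lemma~\ref{lemma:GHL} shows that $(1,z)$ is quadratic; being quadratic on a faithful module it has order~$p$, so $z$ generates $\Omega_1(Z(P))$. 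In either case $1 \times \Omega_1(Z(P))$ is generated by a quadratic element and is therefore quadratic, which completes the verification of the hypothesis.

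The one point needing care is the appeal to Lemma~\ref{lemma:GHL}, which requires $p$ odd; this is harmless, since the only place Hypothesis~\ref{hypo:experiment} is used is in establishing Conjecture~\ref{conj:Y2}, which is trivially true for $p=2$, so we may assume $p$ is odd throughout. I expect the main obstacle to be bookkeeping rather than substance: one must confirm that $F$ genuinely meets all three standing conditions (elementary abelian, weakly closed, non-central) so that the lemma's hypothesis can be invoked, and then correctly convert membership in $Z_2(P)$ into a \emph{central} quadratic element through the commutator trick, using faithfulness at each stage to force order~$p$ and hence membership in $\Omega_1(Z(P))$.
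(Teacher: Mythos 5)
Your proposal is correct and follows essentially the same route as the paper's own proof: define $F$ as the projection of $E$, invoke Lemma~\ref{lemma:preExp} to see it is weakly closed, non-central, with $1\times[F,N_P(F)]\leq E$, pick $1\neq y\in [F,N_P(F)]\cap Z_2(P)$, and handle the non-central case via the commutator trick and Lemma~\ref{lemma:GHL}. The extra bookkeeping you flag (order~$p$ via faithfulness, $p$ odd for Lemma~\ref{lemma:GHL}) is harmless and consistent with the paper's standing conventions.
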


\begin{proof}
Let $G = H \times P$ for a $p$-group~$H$, and let $E$ be a weakly closed
quadratic elementary abelian subgroup of~$G$ with $E \nleq H \times Z(P)$.
We need to show that $1 \times \Omega_1(Z(P))$ is quadratic.
\par
Consider the elementary abelian subgroup $F \leq P$ defined by
$F = \{g \in P \mid \exists h \in H \; (h,g) \in E\}$.
Then $F$ is weakly closed (in $C_P(F)$ with respect to~$P$) by
Lemma~\ref{lemma:preExp}\,\enref{preExp-1}\@.
Moreover, $F \nleq Z(P)$, since $[F,N_P(F)] \neq 1$ by Lemma~\ref{lemma:preExp}\,\enref{preExp-3}\@.
So by assumption we may pick
$g \in Z_2(P) \cap [F,N_P(F)]$ with $g \neq 1$.
Since $g \in [F,N_P(F)]$, we have $(1,g) \in E$
by Lemma~\ref{lemma:preExp}\,\enref{preExp-2}\@. Hence $(1,g)$ is
quadratic and of order~$p$. If $g \in Z(P)$ then we are done,
since $Z(P)$ is cyclic.
\par
If $g \in Z_2(P) \setminus Z(P)$ then
$1 \neq [g,x]$ generates $\Omega_1(Z(P))$ for some $x \in P$. But
then $(1,[g,x]) = [(1,g),(1,x)]$ is quadratic by Lemma~\ref{lemma:GHL}\@.
\end{proof}

\begin{corollary}
\label{coroll:exper1}
Let $p$ be a prime and $P$ a finite $p$-group. Assume that either of the following holds:
\begin{enumerate}
\item
\label{enum:exper1-1}
$P$~is a wreath product of the form $P = C_{p^r} \wr C_p$ for $r \geq 1$;
\item
\label{enum:exper1-2}
$Z(P)$ is cyclic, and $P$ has nilpotence class two or three.
\end{enumerate}
Then Hypothesis~\ref{hypo:experiment} holds for $P$\@.
\end{corollary}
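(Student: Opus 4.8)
The plan is to verify in each case the hypothesis of Lemma~\ref{lemma:exper1}: that $P$ is non-abelian with cyclic centre, and that $[F, N_P(F)] \cap Z_2(P) \neq 1$ for every non-central weakly closed elementary abelian subgroup~$F$ of~$P$. In both cases $P$ has class at least two and is therefore non-abelian. In case~\enref{exper1-2} the centre is cyclic by assumption; in case~\enref{exper1-1} a direct computation shows that $Z(C_{p^r} \wr C_p)$ is the diagonal copy of $C_{p^r}$ in the base $(C_{p^r})^p$, which is cyclic. Moreover, in either case, since $F$ is non-central and weakly closed, Lemma~\ref{lemma:jammedNorm} yields $N_P(F) \gneq C_P(F)$ and hence $[F, N_P(F)] \neq 1$. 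It then remains only to lodge this nontrivial commutator subgroup inside $Z_2(P)$.

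For case~\enref{exper1-2} I would invoke the standard relation $\gamma_{i+1}(P) \leq Z_{c-i}(P)$ between the lower and upper central series of a group of class~$c$, which follows because $[\gamma_{i+1}(P), P; c-i] \leq \gamma_{c+1}(P) = 1$. Taking $i=1$ and $c \leq 3$ gives $[P,P] = \gamma_2(P) \leq Z_2(P)$. Since $[F, N_P(F)] \leq [P,P]$ automatically, we obtain $[F, N_P(F)] \leq Z_2(P)$, so $[F, N_P(F)] \cap Z_2(P) = [F, N_P(F)] \neq 1$, and Lemma~\ref{lemma:exper1} applies.

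For case~\enref{exper1-1} I would write $P = C_{p^r} \wr C_p$ as a wreath product with base $A = C_{p^r}$, so that Lemma~\ref{lemma:preExp2} applies with this~$P$ in the role of its~$G$ and $A$ as its base group. The key step is a case-split on whether $F$ meets the top. If $F \nleq A^p$, then condition~\enref{preExp2-1} of Lemma~\ref{lemma:preExp2} gives $\Omega_1(Z(P)) \leq [F, N_P(F)]$. If instead $F \leq A^p$, then $F$ lies in the abelian normal subgroup $A^p$, and since $Z(A)$ is cyclic while $F$ is non-central, condition~\enref{preExp2-2} applies and again yields $\Omega_1(Z(P)) \leq [F, N_P(F)]$. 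In both sub-cases $\Omega_1(Z(P))$ is nontrivial (as $r \geq 1$) and satisfies $\Omega_1(Z(P)) \leq Z(P) \leq Z_2(P)$, whence $1 \neq \Omega_1(Z(P)) \leq [F, N_P(F)] \cap Z_2(P)$, and Lemma~\ref{lemma:exper1} applies once more.

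The calculations involved are routine --- the centre of the wreath product and the central-series inequality --- so I do not anticipate a serious obstacle. The only point demanding care is checking that the case-split in~\enref{exper1-1} is exhaustive: every non-central weakly closed~$F$ must fall under exactly one of the two hypotheses of Lemma~\ref{lemma:preExp2}, which it does because those two conditions are tailored precisely to the ``$F$ meets the top'' and ``$F$ lies in the abelian base'' alternatives.
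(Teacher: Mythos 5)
Your proposal is correct and follows essentially the same route as the paper: both cases reduce to Lemma~\ref{lemma:exper1}, with case~\enref{exper1-1} handled by the two alternatives of Lemma~\ref{lemma:preExp2} applied to the base $C_{p^r}^p$, and case~\enref{exper1-2} by the observation that $[F,N_P(F)]\leq\gamma_2(P)\leq Z_2(P)$ when the class is at most three, together with Lemma~\ref{lemma:jammedNorm} for nontriviality. The paper phrases the class-$\leq 3$ step as $[F,N_P(F),P]\leq Z(P)$ rather than via the general inequality $\gamma_{i+1}(P)\leq Z_{c-i}(P)$, but this is only a cosmetic difference.
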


\begin{proof}
In both cases, $P$ is nonabelian and has cyclic centre.
Let $F \leq P$ be a non-central elementary abelian subgroup which is weakly closed
in $C_P(F)$ with respect to~$P$. By Lemma~\ref{lemma:exper1} it suffices to show that
$Z_2(P) \cap [F,N_P(F)] \neq 1$.
\begin{enumerate}
\item
We prove that $\Omega_1(Z(P)) \leq [F,N_P(F)]$.
The case $F \nleq C_{p^r}^p$ is
covered by Lemma~\ref{lemma:preExp2}\,\enref{preExp2-1}\@. Since
$C_{p^r}^p$ is normal abelian, the case $F \leq C_{p^r}^p$ is accounted for
by Lemma~\ref{lemma:preExp2}\,\enref{preExp2-2}\@.
\item
Since $F$ is weakly closed and non-central we have $C_P(F) \lneq N_P(F)$ by
Lemma~\ref{lemma:jammedNorm}\@. Hence $[F,N_P(F)] \neq 1$.
And as the (nilpotence) class of~$P$ is at most three we have
$[F,N_P(F),P] \leq Z(P)$,
whence $[F,N_P(F)] \leq Z_2(P)$.
\qedhere
\end{enumerate}
\end{proof}

\section{Wreath products}
\label{section:wreath}
\noindent
We saw in Lemma~\ref{lemma:exper2} that Conjecture~\ref{conj:Y2} holds for the direct
product $H_1 \times \cdots \times H_n$ if each $p$-group $H_r$ is abelian or
satisfies Hypothesis~\ref{hypo:experiment}\@. In Corollary~\ref{coroll:exper1} we saw
our first examples of groups which satisfy the hypothesis. We now demonstrate
that if $P$ satisfies the hypothesis, then so does the wreath
product $P \wr C_p$. This is the key step in proving that the Sylow subgroups of a symmetric
group satisfy Conjecture~\ref{conj:Y2}\@.

\begin{proposition}
\label{prop:exper3}
If the $p$-group $P$ satisfies Hypothesis~\ref{hypo:experiment} then so does the wreath
product $Q =P \wr C_p$.
\end{proposition}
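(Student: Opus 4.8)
The plan is to verify the two clauses of Hypothesis~\ref{hypo:experiment} for $Q = P \wr C_p = P^p \rtimes C_p$. The first clause is structural: $Q$ is nonabelian because the top $C_p$ permutes the $p \geq 2$ base factors nontrivially, and a direct check gives $Z(Q) = \{(z,\ldots,z) \mid z \in Z(P)\}$, the ``diagonal'' copy of $Z(P)$, which is cyclic since $Z(P)$ is. In particular $\Omega_1(Z(Q))$ is the order-$p$ diagonal $\{(z,\ldots,z)\mid z\in\Omega_1(Z(P))\}$, so proving the second clause amounts to showing that this diagonal is quadratic whenever the data $H$, $V$, $E$ of the hypothesis are given, with $G = H \times Q$ and $E \nleq H \times Z(Q)$.

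For the module-theoretic clause, I would first feed the decomposition $G = H \times Q$ into Lemma~\ref{lemma:preExp} (with its ``$P$'' taken to be our $Q$). This produces the weakly closed subgroup $F = \{g \in Q \mid \exists h \in H,\ (h,g)\in E\}$ of $Q$, with $1 \times [F,N_Q(F)] \leq E$ by~\enref{preExp-2} and, since $E \nleq H \times Z(Q)$, with $[F,N_Q(F)] \neq 1$ by~\enref{preExp-3}; thus $F$ is non-central. The guiding idea is that it suffices to prove $\Omega_1(Z(Q)) \leq [F,N_Q(F)]$: for then $1 \times \Omega_1(Z(Q)) \leq 1 \times [F,N_Q(F)] \leq E$, and every nontrivial element of the quadratic subgroup $E$ is quadratic. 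Lemma~\ref{lemma:preExp2} delivers exactly this containment in two of the three cases. If $F \nleq P^p$ then $\Omega_1(Z(Q)) \leq [F,N_Q(F)]$ by~\enref{preExp2-1}; and if $F$ lies in the abelian normal subgroup $Z(P)^p$ of $Q$, then the same follows from~\enref{preExp2-2}, using that $Z(Q)$ is cyclic and $F$ is non-central.

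This leaves the essential case: $F \leq P^p$ but $F \nleq Z(P)^p$. Here I would invoke the hypothesis on $P$ itself. Choose a coordinate $i_0$ with $\pi_{i_0}(F) \nleq Z(P)$, and regroup the index-$p$ subgroup $H \times P^p \leq G$ as $\widetilde{H} \times P_{i_0}$, where $P_{i_0}$ is the $i_0$-th wreath factor and $\widetilde{H} = H \times \prod_{i \neq i_0} P_i$. Since $F \leq P^p$ forces $E \leq H \times P^p$ in this case, and since $E$, being weakly closed in $G$, is a fortiori weakly closed relative to the subgroup $\widetilde{H} \times P_{i_0}$ (on which the restriction of $V$ stays faithful), is quadratic, and satisfies $E \nleq \widetilde{H} \times Z(P_{i_0})$ (this last being exactly $\pi_{i_0}(F) \nleq Z(P)$, as $\pi_{i_0}(E)=\pi_{i_0}(F)$), Hypothesis~\ref{hypo:experiment} for $P$ applies and tells us that the coordinate element $e_{i_0} = (1,\ldots,z,\ldots,1)$, with $z \in \Omega_1(Z(P))$ in position $i_0$, is quadratic. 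Conjugating by the cyclic shift $\sigma$ then shows every coordinate element $e_1,\ldots,e_p$ is quadratic.

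The main obstacle is the final gluing step: passing from ``each $e_i$ is quadratic'' to ``the diagonal $\delta = (z,\ldots,z) = e_1\cdots e_p$ is quadratic.'' By Corollary~\ref{coroll:quadOrthog} the subgroup $\langle e_1,\ldots,e_p\rangle = \Omega_1(Z(P))^p$ is quadratic precisely when the $e_i$ are pairwise orthogonal, but the naive attempt to establish $e_{i_0} \perp e_{i_0}^\sigma$ through Lemma~\ref{lemma:orthogPort}\,\enref{orthog6} is circular, since $[e_{i_0},\sigma]$ is itself the kind of near-diagonal element whose quadraticity we are chasing, and Lemma~\ref{lemma:GHL} does not apply to it directly because $[e_{i_0},\sigma]$ is not centralized by $\sigma$. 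The circularity must be broken using the genuine content of weak closure: the constraint that $[F,F^{\sigma^k}]\neq 1$ whenever $F^{\sigma^k}\neq F$ forbids $F$ from being carried by a $\sigma$-incompatible set of coordinates, and this should force enough $\sigma$-symmetry on $F$ (equivalently, force part of $N_Q(F)$ to lie outside $P^p$) that a commutator $[b,g]$, with $b$ in the quadratic subgroup $1\times[F,N_Q(F)] \leq E$ and $g$ a shift-type element of $N_Q(F)$, realizes a nontrivial power $\delta^s$ of the central element $\delta$. Because $\delta^s$ is central it lies in $C_G(b,g)$, so Lemma~\ref{lemma:GHL} upgrades this to quadraticity of $\delta^s$, and Lemma~\ref{lemma:orthogPort}\,\enref{orthog4} then gives quadraticity of $\delta$ itself. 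Pinning down this support-versus-weak-closure analysis, and verifying that the commutator really lands in $\langle\delta\rangle$, is where the work of the proof will concentrate.
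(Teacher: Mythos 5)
Your reduction to the case $F \leq P^p$, $F \nleq Z(P)^p$ matches the paper exactly (same two easy cases via Lemma~\ref{lemma:preExp2}, same regrouping $H \times P^p = L_i \times P(i)$ to invoke the hypothesis on the $i$-th factor, same conjugation by the shift to make every coordinate element $(1,z_i)$ quadratic), and you have correctly located the real difficulty in the gluing step. But your proposed mechanism for that step has a genuine gap: weak closure of $F$ does \emph{not} force any element of $N_Q(F)$ to lie outside the base $P^p$. Weak closure only says $[F,F^{x^m}] \neq 1$ whenever $F^{x^m} \neq F$; if, say, $T(F) = \{1,\ldots,p\}$, this condition is vacuously compatible with $F^{x^m} \neq F$ for all $m \not\equiv 0$, so $N_Q(F)$ may well be contained in $P^p$ and your ``shift-type element $g \in N_Q(F)$'' need not exist. (Your commutator idea is essentially what Lemma~\ref{lemma:preExp2}\,\enref{preExp2-1} does, but that lemma needs $E$ itself to stick out of $H \times P^p$, which is exactly what fails in the remaining case.) Moreover, even granting such a $g$, the assertion that $[b,g]$ lands in $\langle\delta\rangle$ would need proof.

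The paper breaks the circularity differently, and you would need both of its ingredients. First, orthogonality of $(1,z_i)$ and $(1,z_j)$ for \emph{distinct} $i,j \in T(F)$ is obtained without any normalizing shift: Lemma~\ref{lemma:preExp}\,\enref{preExp-3}, applied to the splitting $L_i \times P(i)$, produces elements $(1,k_i) \in E$ supported in the single coordinate $i$; these are pairwise orthogonal because $E$ is quadratic (Corollary~\ref{coroll:quadOrthog}), and two applications of Lemma~\ref{lemma:orthogCent} (with $C = P(i)$, then $C = P(j)$) upgrade $(1,k_i) \perp (1,k_j)$ to $(1,z_i) \perp (1,z_j)$. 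Second, weak closure is used only combinatorially: if $\sigma^m(T(F)) \cap T(F) = \emptyset$ then $[F^{x^m},F]=1$, forcing $F^{x^m}=F$ and hence $\sigma^m(T(F)) = T(F)$, a contradiction; so every residue $m$ is realized as a difference $b-a$ with $a,b \in T(F)$. Conjugating the known relation $(1,z_a) \perp (1,z_b)$ by a power of the shift then yields $(1,z_i) \perp (1,z_j)$ for \emph{all} $i,j$, and Corollary~\ref{coroll:quadOrthog} makes the diagonal $(1,\zeta) = (1,z_1\cdots z_p)$ quadratic. Without these two steps your argument does not close.
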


\begin{proof}
Certainly $Q$ is nonabelian with cyclic centre. Let $H$ be a $p$-group, and $V$ a
faithful module for $G = H \times Q$. Recall that $Q$ is a semidirect product,
with base subgroup $K := \prod_1^p P$ on which $C_p$ acts by permuting the factors cyclically.
Let $E \leq G$ be a quadratic elementary abelian subgroup which is
weakly closed in $C_G(E)$ with respect to~$G$, and also satisfies $E \nleq H \times Z(Q)$.
We have to show that $1 \times \Omega_1(Z(Q))$ is quadratic.

Let $F = \{g \in Q \mid \exists h \in H \; (h,g) \in E\}$.
Then $F$~is a non-central elementary abelian subgroup of~$Q$;
and by Lemma~\ref{lemma:preExp} it is weakly closed
(in $C_Q(F)$ with respect to~$Q$).

Suppose first that $E \nleq H \times K$.  Then $F \nleq K$,
and so by Lemma~\ref{lemma:preExp2}\,\enref{preExp2-1} we have
$\Omega_1(Z(Q)) \leq [F, N_Q(F)]$.
The same thing happens if $E \leq H \times Z(K)$: for then $F \leq Z(K)$,
which is an abelian normal subgroup of~$Q$.
Hence $\Omega_1(Z(Q)) \leq [F, N_Q(F)]$
by Lemma~\ref{lemma:preExp2}\,\enref{preExp2-2}\@.
In both cases we then deduce
from Lemma~\ref{lemma:preExp}\,\enref{preExp-2}
that $1 \times \Omega_1(Z(Q))$ lies in~$E$
and is therefore quadratic.
\medskip

\noindent
So from now on we may assume that $E \leq H \times K$ but $E \nleq H \times Z(K)$.
That is,
$F$ is a subgroup of $K = \prod_1^p P$, but
$F \nleq Z(K) = Z(P)^p$. Define a subset $T(F) \subseteq \{1,2,\ldots,p\}$ by
\[
i \in T(F) \; \Longleftrightarrow \; F \nleq
P \times P \times \cdots \times Z(P) \times \cdots \times P \, ,
\]
where the factor $Z(P)$ occurs in the $i$th copy of~$P$\@.
Since $F \nleq Z(P)^p$, we have $T(F) \neq \emptyset$.

For $1 \leq i \leq p$ we define subgroups $L_i, P(i) \leq H \times K$ as follows:
\begin{xalignat*}{2}
L_i & = H \times (P \times P \times \cdots \times 1 \times \cdots \times P) &
P(i) & = 1 \times (1 \times 1 \times \cdots \times P \times \cdots \times 1) \, ,
\end{xalignat*}
where the brackets enclose~$K$ and the one distinguished factor occurs in the $i$th factor
of $K = P^p$. Then $H \times K = L_i \times P(i)$ for each~$i$.
Observe that
\begin{equation}
\mytag{eqn:TFnew}
i \in T(F) \; \Longleftrightarrow \; E \nleq L_i \times Z(P(i)) \, .
\end{equation}
Pick $1 \neq \zeta \in \Omega_1(Z(Q))$. We need to show that $(1,\zeta) \in Z(G)$ is quadratic.
Note that $\zeta =(z,z,\ldots,z) \in K$ for some $1 \neq z \in \Omega_1(Z(P))$.
That is,
$\zeta = z_1 z_2 \cdots z_p$, where for
$1 \leq i \leq p$ we define $z_i \in Z(K)$ by
\[
z_i =(1,\ldots,z,\ldots,1) \quad \text{($z$ at the $i$th place).}
\]
Now, the group $P(i) \cong P$ satisfies Hypothesis~\ref{hypo:experiment} by assumption.
Applying the hypothesis to the direct product group $L_i \times P(i) = H \times K$,
we see from Eqn~\eqref{eqn:TFnew} that $(1,z_i)$ is quadratic for every $i \in T(F)$.
As $T(F)$ is nonempty and all
the $z_i$ are conjugate in~$Q$ under the cyclic permutation of the factors of $K =P^p$,
it follows that \emph{every} $(1,z_i)$ is quadratic.

From Lemma~\ref{lemma:preExp}\,\enref{preExp-3} and Eqn~\eqref{eqn:TFnew}
it follows that for every $i \in T(F)$ there is some $1 \neq g_i \in P$ such that
$(1,k_i) \in E$ for $k_i = (1,\ldots,g_i,\ldots,1) \in K$.
Now suppose that $i,j \in T(F)$ are distinct. Then $(1,k_i) \perp (1,k_j)$,
as they both lie in~$E$. Applying Lemma~\ref{lemma:orthogCent} with
$C = P(i)$, we have that $(1,z_i) \perp (1,k_j)$.
Applying this lemma a second time, but now with $C = P(j)$,
we deduce that
\begin{equation}
\mytag{eqn:zperpzNew}
(1,z_i) \perp (1,z_j)
\end{equation}
holds for all $i,j \in T(F)$. We have already seen that it always holds for $i=j$.

As $Q$~is the wreath product $P \wr C_p$, we may pick an element $x \in Q \setminus K$ such that
$(h_1,\ldots,h_p)^x = (h_p,h_1,\ldots,h_{p-1})$ holds for all $(h_1,\ldots,h_p) \in P^p = K$.
Then $T(F^x) = \sigma(T(F))$, where $\sigma$~is the $p$-cycle
$(1\,2\,\ldots\,p)$. Hence $T(F^{x^m}) = \sigma^m(T(F))$.
We claim that
\begin{equation}
\mytag{eqn:sigmaTF}
\forall\,0 \leq m \leq p-1 \qquad
\sigma^m(T(F)) \cap T(F) \neq \emptyset \, .
\end{equation}
For if $\sigma^m(T(F)) \cap T(F) = \emptyset$,
then $[F^{x^m},F] = 1$ by definition of~$T(F)$.
But $F$ is weakly closed, and so $[F^{x^m},F] = 1$ implies that $F^{x^m} = F$. But then
$\sigma^m(T(F)) = T(F^{x^m}) = T(F)$, and so
$\sigma^m(T(F)) \cap T(F) = T(F) \neq \emptyset$, a contradiction.

So Eqn~\eqref{eqn:sigmaTF} is proved. We may rephrase it as follows:
\begin{equation}
\mytag{eqn:overlapNew}
\forall \, 0 \leq m \leq p-1 \quad \exists \, a,b \in T(F)
\quad b \equiv a + m \pmod{p} \, .
\end{equation}
Now pick any $i,j \in \{1,2,\ldots,p\}$.
By Eqn~\eqref{eqn:overlapNew} there are $a,b \in T(F)$ with
$b - a \equiv j - i \pmod{p}$.
Define $r$~to be the integer $0 \leq r < p$ such that $r \equiv i - a \pmod{p}$.
Then
\begin{xalignat*}{3}
i & = a + r \pmod{p} & \text{and} & & j & = b + r \pmod{p} \, .
\end{xalignat*}
Hence
\begin{xalignat*}{3}
(1, z_i) & = (1,z_a)^{(1,x^r)} & \text{and} & & 
(1, z_j) & = (1,z_b)^{(1,x^r)} \, .
\end{xalignat*}
But $(1,z_a) \perp (1,z_b)$, as this is one of the cases for which we have already demonstrated
Eqn~\eqref{eqn:zperpzNew}\@. So we deduce that $(1,z_i) \perp (1,z_j)$: that is,
Eqn~\eqref{eqn:zperpzNew} holds for all $i,j$. From $\zeta = z_1 z_2 \cdots z_p$ we therefore
deduce by Corollary~\ref{coroll:quadOrthog} that $(1,\zeta) \perp (1,\zeta)$,
that is $(1,\zeta)$ is quadratic, as required.
\end{proof}

\begin{corollary}
\label{coroll:exper}
The Sylow $p$-subgroups of the symmetric group $S_{p^n}$ satisfy Hypothesis~\ref{hypo:experiment}
for every $n \geq 2$.
\end{corollary}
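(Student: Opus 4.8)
The plan is to proceed by induction on $n$, assembling the two tools already in place: the base examples supplied by Corollary~\ref{coroll:exper1} and the wreath-product stability of Proposition~\ref{prop:exper3}. Recall from the discussion preceding Theorem~\ref{thm:ol3main} that a Sylow $p$-subgroup of $S_{p^n}$ is isomorphic to the iterated wreath product $P_n = C_p \wr C_p \wr \cdots \wr C_p$ with $n$ factors, and that these satisfy the recursion $P_{n+1} = P_n \wr C_p$. Both facts are standard and are exactly the structural input the induction needs.

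For the base case $n = 2$, I would note that $P_2 = C_p \wr C_p$ is precisely the group $C_{p^r} \wr C_p$ with $r = 1$, so it satisfies Hypothesis~\ref{hypo:experiment} by Corollary~\ref{coroll:exper1}\,\enref{exper1-1}. For the inductive step, assuming $P_n$ satisfies the hypothesis for some $n \geq 2$, I would write $P_{n+1} = P_n \wr C_p$ and invoke Proposition~\ref{prop:exper3} to conclude that $P_{n+1}$ satisfies Hypothesis~\ref{hypo:experiment} as well. This closes the induction and establishes the corollary for all $n \geq 2$.

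Since the substantive content has already been discharged — the wreath-product step in Proposition~\ref{prop:exper3} and the base cases in Corollary~\ref{coroll:exper1} — there is essentially no obstacle remaining: the corollary is merely the combination of these results through the recursion $P_{n+1} = P_n \wr C_p$. The only points demanding any care are the identification of the Sylow subgroup with the iterated wreath product and the recognition that $C_p \wr C_p$ is the $r=1$ instance covered by Corollary~\ref{coroll:exper1}\,\enref{exper1-1}, both of which are routine.
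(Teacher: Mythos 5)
Your proposal is correct and follows exactly the same route as the paper's own proof: induction on $n$ with base case $P_2 = C_p \wr C_p$ handled by Corollary~\ref{coroll:exper1}\,\enref{exper1-1} and the inductive step $P_{n+1} = P_n \wr C_p$ handled by Proposition~\ref{prop:exper3}. No further comment is needed.
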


\begin{proof}
By induction on~$n$. It is well known that the Sylow
$p$-subgroups $P_n$ of $S_{p^n}$ are isomorphic to the iterated wreath product
$C_p \wr C_p \wr \cdots \wr C_p$, with $n$ copies of $C_p$.
So $P_2 = C_p \wr C_p$ satisfies the hypothesis by
Corollary~\ref{coroll:exper1}\,\enref{exper1-1}\@.
If $P_r$ satisfies the hypothesis, then so does
$P_{r+1} = P_r \wr C_p$ by Proposition~\ref{prop:exper3}\@.
\end{proof}

\section{Deepest commutators}
\label{section:deepest}
\noindent
In this and the subsequent section we assemble the tools needed for the proof
of Theorems \ref{thm:ZY}~and \ref{thm:noGenQuad}\@.

\begin{notn}
Let $G$ be a $p$-group and $g \in G$ a non-central element.
Let $r_0 = \max \{r \mid \exists k \in K_r(G) \colon [g,k] \neq 1 \}$.
This exists by nilpotence, since $g$~is not central.
\par
If $k \in K_{r_0}(G)$ and $[g,k] \neq 1$, then we call $[g,k]$ a
\emph{deepest commutator} of~$g$.
\end{notn}

\begin{rk}
Observe that $r_0(g)=r_0(g^x)$ for every $x \in G$.
\end{rk}

\begin{lemma}
\label{ol3:deepestCommutator}
Let $G$ be a $p$-group and $g \in G$ a non-central element.
\begin{enumerate}
\item
$g$ has at least one deepest commutator, and $[g,[g,k]]=1$ holds for
every deepest commutator $[g,k]$ of~$g$.
\item
Suppose that $k$~is any element of~$G$ such that $y = [g,k]$ satisfies
$[g,y]=1$.  Then the order of $y$~divides the order of~$g$.

In particular
if $y\neq 1$ and $g$ has order~$p$, then so does~$y$.
\end{enumerate}
\end{lemma}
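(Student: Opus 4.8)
The plan is to handle the two parts separately; both rest on the maximality built into the definition of $r_0$ together with elementary commutator identities.

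For part~(1), existence is immediate from the definition: the set $\{r \mid \exists k \in K_r(G)\colon [g,k]\neq 1\}$ is non-empty (it contains $r=1$, as $g$ is non-central) and bounded above by nilpotence, so $r_0$ is attained and some $k \in K_{r_0}(G)$ gives $[g,k]\neq 1$, which is by definition a deepest commutator. For the identity $[g,[g,k]]=1$, the key point I would isolate is that any $c=[g,k]$ with $k \in K_{r_0}(G)$ already lies one level deeper in the lower central series, namely $c \in K_{r_0+1}(G)$: indeed $[k,g]\in[K_{r_0}(G),G]=K_{r_0+1}(G)$ and $c=[g,k]=[k,g]^{-1}$ lies in the same subgroup. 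Since $r_0+1>r_0$, maximality of $r_0$ says that no element of $K_{r_0+1}(G)$ has non-trivial commutator with~$g$; applying this to $c$ yields $[g,c]=1$, which is exactly the assertion.

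For part~(2), I would argue by a direct order computation. Write $n$ for the order of~$g$, and suppose $y=[g,k]$ satisfies $[g,y]=1$. Rewriting the commutator as $g^k = g\,[g,k] = gy$ shows that $g$ and~$y$ commute and that $g^k$, being conjugate to~$g$, also has order~$n$. Commutativity of $g$ and~$y$ gives $(gy)^n = g^n y^n = y^n$, while on the other hand $(g^k)^n = (g^n)^k = 1$; hence $y^n=1$ and the order of~$y$ divides~$n$. The ``in particular'' clause then follows at once: if $g$ has order~$p$ and $y\neq 1$, the order of~$y$ divides the prime~$p$ and is greater than~$1$, so it equals~$p$.

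Neither step poses a genuine obstacle. The only thing that needs care is the bookkeeping in part~(1) --- verifying that a deepest commutator genuinely descends to $K_{r_0+1}(G)$, so that the maximality of $r_0$ can be invoked to kill $[g,c]$. Once that observation is in place, part~(1) is purely formal, and part~(2) reduces to the identity $(gy)^n=g^ny^n$ for commuting elements.
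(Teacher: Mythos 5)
Your proof is correct and follows essentially the same route as the paper's (much terser) argument: part (1) is the observation that a deepest commutator lands in $K_{r_0+1}(G)$ and is therefore killed by the maximality of $r_0$, and part (2) is the computation $(g^k)^n=(gy)^n=g^ny^n$ for commuting $g,y$. You have merely spelled out the bookkeeping that the paper leaves implicit.
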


\begin{proof}
Deepest commutators exist, and commute with~$g$ by maximality of~$r_0$.
If $1\neq y=[g,k]$ commutes with $g$,
then $g^k = gy$ and so
$(g^{p^n})^k = (gy)^{p^n} = g^{p^n}y^{p^n}$. So if $g^{p^n} = 1$ then
$y^{p^n} = 1$.
\end{proof}


\noindent
The concept ``deepest commutator of~$g$'' depends not only
on the element~$g$, but also on the ambient group~$G$.
When we need to stress the group~$G$ we shall
use the phrase ``deepest commutator in~$G$''\@.

\begin{notn}
Let $G$ be a $p$-group, $g$~an element of~$G$, and $N$ a normal subgroup
of~$G$. If $y,x \in L := C_G(g) N$ are such that $y = [g,x]$ is
a deepest commutator of~$g$ in~$L$, then we call $y$ a
\emph{$(G,N)$-locally deepest commutator} of~$g$.
A \emph{locally deepest} commutator is a $(G,N)$-locally deepest
commutator for some~$N$.
\end{notn}

\begin{rk}
Since $N$~is normal, $L$~is a group.
By the existence of deepest commutators, $g$~has a $(G,N)$-locally
deepest commutator if and only if $N \nleq C_G(g)$.
\end{rk}

\begin{lemma}
\label{lemma:Ndeepest}
If $y$ is an $(G,N)$-locally deepest commutator of $g \in G$, then $y =[g,x]$
for some $x \in N$.
\end{lemma}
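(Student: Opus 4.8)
The statement to prove is Lemma~\ref{lemma:Ndeepest}: if $y$ is a $(G,N)$-locally deepest commutator of $g\in G$, then $y=[g,x]$ for some $x\in N$ (rather than merely for some $x$ in the larger group $L=C_G(g)N$).

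Here is my plan. By definition, $y=[g,x']$ for some $x'\in L=C_G(g)N$, where $y$ is a deepest commutator of $g$ computed inside the group $L$. Write $x'=cn$ with $c\in C_G(g)$ and $n\in N$; the goal is to replace $x'$ by the element $n\in N$ while preserving the commutator value. The key computational tool is the standard commutator identity $[g,cn]=[g,n]\,[g,c]^{\,n}$ (equivalently $[g,cn]=[g,c]\,[g,n]^{\,c}$, depending on the convention in use). Since $c$ centralizes $g$, we have $[g,c]=1$, and the identity collapses to $[g,cn]=[g,n]$. Thus $y=[g,x']=[g,n]$ with $n\in N$, and the element $x:=n$ does the job.

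The only thing I would need to check carefully is that this $n\in N$ genuinely witnesses $y$ as a \emph{deepest} commutator, i.e.\@ that the element $n$ lies in the appropriate lower central term $K_{r_0}(L)$ with $[g,n]\neq 1$. But in fact no extra work is required here: the defining property of a locally deepest commutator is only that the \emph{value} $y=[g,x']$ be a deepest commutator of $g$ in $L$, and I have shown that this same value $y$ equals $[g,n]$ with $n\in N$. Since $y\neq 1$ (a deepest commutator is nontrivial by the maximality defining $r_0$, per the remark preceding Lemma~\ref{ol3:deepestCommutator}), we automatically have $[g,n]\neq 1$, so $x=n$ exhibits $y$ as a commutator $[g,x]$ with $x\in N$ exactly as claimed. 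The statement asks only that $y$ be of the form $[g,x]$ with $x\in N$, not that $n$ itself lie in $K_{r_0}(L)$, so the centralizer-factor cancellation completes the proof.

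I do not anticipate a genuine obstacle: the argument is essentially a one-line commutator manipulation. The only point demanding a moment's care is the bookkeeping of left- versus right-normed commutators and the precise sign/placement of the conjugation in the identity $[g,cn]=[g,n][g,c]^{n}$, so that the term containing $[g,c]=1$ is the one that vanishes. Once the convention matching \cite[Section~2]{newol} is fixed, the cancellation is immediate and the lemma follows.
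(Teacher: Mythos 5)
Your proposal is correct and follows essentially the same route as the paper: both decompose $x'=cn$ with $c\in C_G(g)$ and $n\in N$ and observe that the centralizer factor does not change the commutator (the paper phrases this as $g^{x'}=g^{n}$, you via the identity $[g,cn]=[g,n][g,c]^{n}$ with $[g,c]=1$). Your closing observation that the lemma only requires $y$ to be \emph{some} commutator $[g,x]$ with $x\in N$, with no claim that $x$ lies in a particular lower central term, is also the correct reading and matches the paper's (implicit) treatment.
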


\begin{proof}
We have that $y = [g,x']$ for some $x' \in C_G(g) N$, and so $x' = ax$ for some
$a \in C_G(g)$, $x \in N$. But then $g^{x'} = g^x$ and so $[g,x] = y$.
\end{proof}

\section{Late and last quadratics}
\label{section:late}

\noindent
Throughout this section, $G$ is a finite $p$-group and $V$ a faithful
$\f G$-module. Recall that an element $g \in G$ is called \emph{quadratic}
if its action on~$V$ has minimal polynomial $(X-1)^2$.

\begin{notn}
Suppose that $g \in G$ is quadratic.
We call $g$ \emph{late} quadratic if every locally deepest commutator of~$g$
is non-quadratic.
We call $g$ \emph{last} quadratic if every iterated commutator
$z = [g,h_1,h_2,\ldots,h_r]$ with $r \geq 1$ and $h_1,\ldots,h_r \in G$
is non-quadratic.
\end{notn}

\begin{rk}
An element $g \in G$ can only fail to have any locally deepest commutators if
$g \in Z(G)$, for it is only if $g \in Z(G)$ that
$N \leq C_G(g)$ holds for every $N \trianglelefteq G$.
In particular, every quadratic element in $Z(G)$ is late quadratic.

Note that every last quadratic element is late quadratic. We shall see in
Lemma~\ref{lemma:strange} that each late quadratic element has
elementary abelian normal closure in~$G$.  Hence all
the non-trivial iterated commutators of a last quadratic element have order~$p$.
\end{rk}

\begin{lemma}
\label{lemma:prestrange}
If $G$ has quadratic elements, then it has both
late quadratic and last quadratic elements.
\end{lemma}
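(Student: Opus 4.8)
The plan is to produce a \emph{last} quadratic element directly; since every last quadratic element is late quadratic, as noted in the remark preceding the statement, this single construction yields both kinds of element at once and the lemma follows. The right way to organize the argument is to measure the depth of a quadratic element by the lower central series $G = K_1(G) \geq K_2(G) \geq \cdots$ of the finite $p$-group~$G$, with $K_{j+1}(G) = [K_j(G),G]$; this series is decreasing and reaches~$1$.

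By hypothesis $K_1(G) = G$ contains a quadratic element, so the set of indices $r$ for which $K_r(G)$ contains a quadratic element is nonempty and finite. I would let $r$ be the \emph{largest} such index and fix a quadratic element $g \in K_r(G)$. The claim is that this $g$ is last quadratic.

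The verification is then immediate from the definition of the lower central series. Any iterated commutator $z = [g,h_1,h_2,\ldots,h_s]$ with $s \geq 1$ and $h_1,\ldots,h_s \in G$ satisfies $z \in K_{r+s}(G) \leq K_{r+1}(G)$, since each commutation with an element of~$G$ moves one step down the series. By maximality of~$r$, the subgroup $K_{r+1}(G)$ contains no quadratic element; hence $z$ is non-quadratic (including the possibility $z = 1$, which is never quadratic). Thus every iterated commutator of~$g$ of length at least one is non-quadratic, so $g$ is last quadratic, and therefore also late quadratic, giving both required elements.

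The main point — indeed essentially the only point — is the choice of the right invariant: filtering the quadratic elements by how deep they sit in the lower central series and passing to a deepest one forces all its further commutators out of the range $K_{r+1}(G)$ where quadratic elements can survive. I do not expect to need the finer machinery such as Lemma~\ref{lemma:GHL} or the analysis of (locally) deepest commutators at this stage; those tools become relevant only for the sharper statements that exploit the existence guaranteed here, for example Lemma~\ref{lemma:strange}. The one routine thing to check is the commutator bookkeeping, confirming that left-normed iterated commutators of an element of $K_r(G)$ genuinely descend into $K_{r+1}(G)$.
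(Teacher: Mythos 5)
Your proposal is correct and is essentially identical to the paper's own proof: both take $r$ maximal with $K_r(G)$ containing a quadratic element, pick a quadratic $g \in K_r(G)$, and observe that all further iterated commutators land in $K_{r+1}(G)$ and hence cannot be quadratic. The paper states the conclusion for ``last'' and ``late'' simultaneously, while you route the ``late'' case through the remark that last quadratic implies late quadratic; this is an immaterial difference.
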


\begin{proof}
As there are quadratic elements we may set
\[
t_0 := \max \{t \mid \text{$K_t(G)$ contains a quadratic element} \} \, .
\]
Pick a quadratic element $g \in K_{t_0}(G)$ and observe that every
commutator lies in $K_{t_0+1}(G)$. By the maximality of~$t_0$,
we see that $g$ is both last and late quadratic.
\end{proof}

\begin{lemma}
\label{lemma:strange}
Every late quadratic element $g$~of $G$
lies in $\Omega_1(Z(\yy(G)))$ and therefore satisfies
$\yy(G) \leq C_G(g)$.
\end{lemma}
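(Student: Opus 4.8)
The plan is to reduce the whole statement to the single assertion that $g$ centralises $\yy(G)$, i.e. $\yy(G) \leq C_G(g)$. Granting this, the conclusion is formal: since $\yy(G)$ is centric in~$G$ by Lemma~\ref{lemma:Yportmanteau}\,\enref{abnormY}, we have $C_G(\yy(G)) = Z(\yy(G))$, so $\yy(G) \leq C_G(g)$ gives $g \in C_G(\yy(G)) = Z(\yy(G))$; and as $g$ is quadratic on the faithful module~$V$ it has order~$p$, whence $g \in \Omega_1(Z(\yy(G)))$. The second clause of the statement, $\yy(G) \leq C_G(g)$, is then just the trivial converse. So the entire proof rests on showing $\yy(G) \leq C_G(g)$.

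To this end I would argue by contradiction, the goal being to manufacture a quadratic \emph{locally deepest} commutator of~$g$, which is forbidden by the hypothesis that $g$ is late quadratic. Suppose $\yy(G) \nleq C_G(g)$ and fix a $Y$-series $1 = Y_0 \leq Y_1 \leq \cdots \leq Y_n = \yy(G)$. Choose the index~$i$ to be minimal with $[g, Y_i] \neq 1$; this exists because $[g, Y_n] = [g, \yy(G)] \neq 1$. Minimality forces $[g, Y_{i-1}] = 1$, so $g$ centralises $Y_{i-1}$, and since $g$ has order~$p$ this places $g \in \Omega_1(C_G(Y_{i-1}))$ -- precisely the subgroup appearing in the defining relation of the $Y$-series at level~$i$.

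Now I would use that $Y_i \trianglelefteq G$ with $Y_i \nleq C_G(g)$ to select a $(G, Y_i)$-locally deepest commutator $y = [g, x]$ of~$g$, taking $x \in Y_i$ by Lemma~\ref{lemma:Ndeepest}. As a deepest commutator, $y$ satisfies $[g, y] = 1$ by Lemma~\ref{ol3:deepestCommutator}, and $y \neq 1$. On the other hand, the $Y$-series relation $[\Omega_1(C_G(Y_{i-1})), Y_i; 2] = 1$, applied to $g \in \Omega_1(C_G(Y_{i-1}))$ and $x \in Y_i$, yields $[[g,x],x] = [y, x] = 1$. Thus $y = [g, x]$ is a non-trivial element commuting with both $g$ and~$x$, while $g$ is quadratic, so Lemma~\ref{lemma:GHL} (with $B = g$, $A = x$, $C = [g,x] = y$) forces $y$ to be quadratic. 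This contradicts the lateness of~$g$, and the contradiction establishes $\yy(G) \leq C_G(g)$.

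The crux -- and the step I expect to be the main obstacle to find rather than to verify -- is the simultaneous alignment of two a priori unrelated indexings: the depth at which a locally deepest commutator is extracted, and the level of the $Y$-series. Choosing $N = Y_i$ for exactly the minimal~$i$ with $[g, Y_i] \neq 1$ is what makes the deepest-commutator property supply $[g, y] = 1$ while the level-$i$ relation $[\Omega_1(C_G(Y_{i-1})), Y_i; 2] = 1$ independently supplies $[y, x] = 1$; these are precisely the two commuting hypotheses that Lemma~\ref{lemma:GHL} requires in order to promote $y$ to a quadratic element. Once this matching is spotted, the surrounding verifications (the order of~$g$, the existence of the locally deepest commutator, and the centric reduction) are all routine.
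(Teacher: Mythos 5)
Your proposal is correct and follows essentially the same route as the paper: reduce to $\yy(G)\leq C_G(g)$ via centricity of $\yy(G)$, then extract a $(G,Y_i)$-locally deepest commutator $y=[g,x]$ with $x\in Y_i$ at the first level where $g$ fails to centralise, and use $[g,y]=1$ together with the $Y$-series relation $[y,x]=1$ to make Lemma~\ref{lemma:GHL} produce a quadratic locally deepest commutator, contradicting lateness. The only cosmetic difference is that you phrase the level-selection as a minimal counterexample where the paper runs an induction on~$r$.
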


\begin{proof}
We have already observed that all quadratic elements have order~$p$.
As $\yy(G)$ is a centric subgroup of~$G$ (Lemma~\ref{lemma:Yportmanteau}),
it suffices to show that each quadratic element $g$~of $G$ lies in
$C_G(\yy(G))$.
Let $1 = Y_0 \leq \cdots \leq Y_n = \yy(G)$ be a $Y$-series,
so
\begin{equation}
\mytag{eqn:Ystrange}
Y_r \trianglelefteq G \qquad \text{and} \qquad
[\Omega_1(C_G(Y_{r-1})),Y_r;2]=1 \, .
\end{equation}
We show that $g \in C_G(Y_r)$ by induction on~$r$. If $r=0$, this is clear.
Suppose $r \geq 1$ and
$g \in C_G(Y_{r-1})$. If $g \notin C_G(Y_r)$ then by
Lemma~\ref{lemma:Ndeepest}, $g$~has a $(G,Y_r)$-locally
deepest commutator $y = [g,x]$
with $x \in Y_r$. Then $[y,g]=1$ by Lemma~\ref{ol3:deepestCommutator},
and $[y,x] = [g,x,x]=1$ by
Eqn~\eqref{eqn:Ystrange}\@.
So, as $g$~is quadratic,
$y$ is too by Lemma~\ref{lemma:GHL}\@.
But $y$ cannot be quadratic,
since $y$~is a locally deepest commutator of the
late quadratic element $g \in G$\@.
\end{proof}

\begin{remark}
If we were attempting to prove Conjecture~\ref{conj:Y2} by induction
on~$\abs{G}$ then we could assume that it holds for $G/\yy(G)$.
By Theorem~\ref{thm:Y12} this would mean that $J(G) \leq \yy(G)$.
It would then follow from Lemma~\ref{lemma:strange} that every late quadratic
element lies in $\Omega_1(Z(J(G)))$,
the intersection of all the greatest rank elementary abelian subgroups of~$G$.
\end{remark}

\begin{lemma}
\label{lemma:lastQuadratic}
Let $H \leq G$ be the subgroup generated by all last quadratic
elements. Then every quadratic element of~$G$ lies in $C_G(H)$.
\end{lemma}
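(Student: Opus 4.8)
The plan is to reduce the statement to a commuting relation between individual elements. Since $H$ is generated by the last quadratic elements, and $C_G(H)$ consists of exactly those elements commuting with every generator of $H$, it suffices to fix an arbitrary quadratic element $g$ and an arbitrary last quadratic element $h$ and to prove that $[g,h]=1$. I would argue by contradiction, assuming $[g,h]\neq 1$, and manufacture a nontrivial iterated commutator of $h$ that is quadratic, contradicting the defining property of a last quadratic element.

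First I would form the descending sequence of commutators $c_0 = h$ and $c_{i+1} = [c_i,g]$, so that each $c_i$ is the iterated commutator $[h,g,\ldots,g]$ with $i$ copies of $g$. Since $[g,h]\neq 1$ we have $c_1 = [h,g]\neq 1$, and by nilpotence of $G$ there is a largest $k\geq 1$ with $c_k\neq 1$; then $c_{k+1}=[c_k,g]=1$, so $c_k$ centralizes $g$. The key structural observation is that every $c_i$ lies in the normal closure $\langle h^G \rangle$, and this normal closure is elementary abelian because $h$, being last quadratic, is in particular late quadratic, so that Lemma~\ref{lemma:strange} (together with the remark following it) places $h$ --- and hence all of its conjugates --- inside the abelian group $\Omega_1(Z(\yy(G)))$. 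Consequently the $c_i$ commute pairwise; in particular $c_k$ commutes with $c_{k-1}$.

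Now I would invoke Lemma~\ref{lemma:GHL} with $A = c_{k-1}$ and $B = g$. The element $C := [B,A] = [g,c_{k-1}] = c_k^{-1}$ is nontrivial, and it lies in $C_G(c_{k-1},g)$ since $c_k$ commutes both with $c_{k-1}$ (abelian normal closure) and with $g$ (maximality of $k$). As $B = g$ is quadratic, the lemma forces $c_k^{-1}$ to be quadratic, whence $c_k$ is quadratic by Lemma~\ref{lemma:orthogPort}\,\enref{orthog4}. But $c_k = [h,g,\ldots,g]$ with $k\geq 1$ is a nontrivial iterated commutator of the last quadratic element $h$, so it must be non-quadratic --- a contradiction. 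This yields $[g,h]=1$ and hence $g\in C_G(H)$.

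The heart of the argument, and the point most likely to cause trouble, is ensuring that Lemma~\ref{lemma:GHL} applies at all: this requires $c_k$ to centralize both $c_{k-1}$ and $g$, and the $c_{k-1}$ part is exactly where the abelianness of the normal closure of the late quadratic element $h$ (via Lemma~\ref{lemma:strange}) is indispensable. A secondary subtlety is orientation: one must let the genuinely quadratic element $g$ play the role of $B$ in Lemma~\ref{lemma:GHL}, since the iterated commutator $c_{k-1}$ is itself non-quadratic and could not serve that role.
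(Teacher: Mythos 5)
Your proof is correct and takes essentially the same route as the paper's: the same descending commutator sequence (with the letters $g$ and $h$ interchanged), the same appeal to the abelian normal closure of the last quadratic element via Lemma~\ref{lemma:strange} to get the needed commutation, and the same application of Lemma~\ref{lemma:GHL} at the last nontrivial term to produce a quadratic iterated commutator and a contradiction. No gaps.
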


\begin{proof}
If not then $[g,h] \neq 1$ for some elements $g,h \in G$, with
$g$~last quadratic and $h$~quadratic.
By nilpotency $[g,h;r] \neq 1$ and
$[g,h;r+1]=1$ for some $r \geq 1$. Let $k = [g,h;r-1]$.
As the normal closure of $g$~in $G$ is abelian
(Lemma~\ref{lemma:strange}), all the iterated commutators of~$g$
commute, and so $[[k,h],k]=1$.
Also $[[k,h],h]=[g,h;r+1]=1$.
Lemma~\ref{lemma:GHL}
therefore says that $1 \neq [k,h]$ is quadratic, since $h$~is. But
$[k,h] = [g,h;r]$ cannot be quadratic, for $g$~is last quadratic.
Contradiction.
\end{proof}

\section{Powerful $p$-groups}
\label{section:powerful}
\noindent
We use L.~Wilson's paper~\cite{WilsonL:powerStructure} as a reference on
powerful $p$-groups. Recall from \cite[Definition 1.3]{WilsonL:powerStructure}
that for an odd prime~$p$, a finite $p$-group $G$ is called \emph{powerful} if
$G' \leq G^p := \langle g^p \mid g \in G \rangle$.
As in the proof of Proposition~\ref{prop:jammed}, recall that Timmesfeld's
Replacement Theorem tells us that if $V$~is a faithful $F$-module for~$G$,
then there is an elementary abelian subgroup $E$~of $G$ which is a quadratic
offender.
Our first result improves \cite[Theorem 6.2]{newol}\@.

\begin{proposition}
\label{prop:prePowerful}
Let $p$ be an odd prime and $G$ a finite $p$-group.
If $G' \cap \Omega_1(G)$ is abelian
then $G$ satisfies Conjecture~\ref{conj:Y2}\@.
\end{proposition}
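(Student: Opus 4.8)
The plan is to combine Timmesfeld's Replacement Theorem with the structure theory of powerful $p$-groups, in the style of Theorem~\ref{thm:ZY}, but exploiting the hypothesis $G' \cap \Omega_1(G)$ abelian to control the late/last quadratic elements. Since we already know (from the discussion preceding Proposition~\ref{prop:jammed} and from Lemma~\ref{lemma:prestrange}) that a faithful $F$-module forces the existence of quadratic elements, and hence of \emph{last} quadratic elements, the real task is to show that under the stated assumption these last quadratic elements are forced into $\Omega_1(Z(G))$. First I would invoke Lemma~\ref{lemma:prestrange}: assuming $V$ is a faithful $F$-module, Timmesfeld gives a quadratic element, so $G$ has a last quadratic element $g \neq 1$.

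Next I would analyse where $g$ can live. By Lemma~\ref{lemma:strange} the normal closure of every late (hence every last) quadratic element is elementary abelian, so in particular $g$ and all its nontrivial iterated commutators have order~$p$ and lie in $\Omega_1(G)$. The idea is that any nontrivial commutator $[g,h]$ lies in $G' \cap \Omega_1(G)$, which is abelian by hypothesis; I would use this abelianness, together with the defining property of ``last'' quadratic (that \emph{no} iterated commutator $[g,h_1,\ldots,h_r]$ with $r \geq 1$ is quadratic), to run a Lemma~\ref{lemma:GHL} argument. Concretely, if $g$ failed to be central, it would have a nontrivial commutator $[g,h]$; the plan is to show via Lemma~\ref{lemma:GHL} (applied inside the abelian group $G' \cap \Omega_1(G)$, where the relevant elements commute) that a suitable iterated commutator would have to be quadratic, contradicting last-quadraticity. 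This is where the powerful hypothesis enters: I expect to use the power-commutator structure of powerful groups (as in \cite{WilsonL:powerStructure}, and as already exploited in the proof of Lemma~\ref{lemma:Yportmanteau}~\textup{(v)} via Eqn~(2.2) of~\cite{newol}) to guarantee that the commutators one produces genuinely land in $G'$ and interact correctly with the $p$th-power map.

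Having forced the last quadratic element $g$ to be central, I would conclude that $g \in \Omega_1(Z(G))$ and is quadratic, which is exactly what Conjecture~\ref{conj:Y2} demands. The argument thus parallels Theorem~\ref{thm:ZY}, but replaces the condition $\Omega_1(Z(\yy(G))) = \Omega_1(Z(G))$ by the hypothesis on $G' \cap \Omega_1(G)$ as the mechanism that collapses the relevant commutators. The statement that this \emph{improves} \cite[Theorem~6.2]{newol} suggests that the earlier result assumed powerfulness directly, whereas the weaker condition ``$G' \cap \Omega_1(G)$ abelian'' is implied by powerfulness and suffices here.

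\textbf{The main obstacle} I anticipate is the interplay between the last-quadratic condition and the abelianness of $G' \cap \Omega_1(G)$: one must verify that the commutators arising in a Lemma~\ref{lemma:GHL} application all lie in $G' \cap \Omega_1(G)$ and pairwise commute, so that the ``$[B,A] \in C_G(A,B)$'' hypothesis of that lemma is met. In particular I expect the delicate point to be showing that for a non-central last quadratic $g$, one can locate elements $A, B$ with $[B,A]$ a nontrivial element of $C_G(A,B)$ and $B$ quadratic, producing a quadratic iterated commutator and thereby the desired contradiction; controlling whether $[g,h]$ itself has order $p$ and whether it centralises the needed elements is the crux, and is precisely where one leans on Lemma~\ref{ol3:deepestCommutator} together with the abelianness hypothesis.
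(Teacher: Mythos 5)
Your strategy diverges from the paper's and, as sketched, has a genuine gap. The paper does not argue via late/last quadratic elements at all: it keeps the quadratic \emph{offender} $E$ (from Timmesfeld) in play throughout. It uses \cite[Theorem~1.5]{newol} ($[G',E]\neq 1$, and no offender lies in an abelian normal subgroup) together with the Dedekind lemma to produce $a\in E$ with $[G'\cap\Omega_1(G),a]\neq 1$; it then forms the \emph{normal} subgroup $N=G'\langle a\rangle\cap\Omega_1(G)=\langle a\rangle(G'\cap\Omega_1(G))$, observes that $N'$ consists of commutators $[a,x]$ with $x\in G'\cap\Omega_1(G)$ (this is precisely where the abelianness hypothesis is used), and concludes from $1\neq N'\trianglelefteq G$ that some $[a,x]$ lies in $\Omega_1(Z(G))$ --- whence it commutes with both $a$ and $x$ for free, and Lemma~\ref{lemma:GHL} makes it quadratic.

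Your plan discards the offender after extracting a single quadratic element and tries to force a \emph{last} quadratic $g$ into the centre. Two steps fail. First, nothing you propose rules out that $g$ centralises $G'\cap\Omega_1(G)$ while remaining non-central; the paper excludes the analogous configuration only by exploiting that $E$ is an offender. Second, to apply Lemma~\ref{lemma:GHL} with $B=g$ you need a commutator $[g,h]\neq 1$ centralising both $g$ and $h$; abelianness of $G'\cap\Omega_1(G)$ (plus Lemma~\ref{lemma:strange}) gives commutation among the iterated commutators of $g$, but it does not give $[g,h,h]=1$. If $[g,h,h]\neq 1$ you are pushed, as in Lemma~\ref{lemma:lastQuadratic}, to $k=[g,h;r-1]$ with $r\geq 2$, and then the element that must play the role of the quadratic $B$ is itself a non-quadratic iterated commutator of $g$, so Lemma~\ref{lemma:GHL} yields nothing (in Lemma~\ref{lemma:lastQuadratic} this is rescued because the second element $h$ is assumed quadratic, which you cannot assume here). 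Finally, your appeal to ``the powerful hypothesis'' is misplaced: Proposition~\ref{prop:prePowerful} does not assume $G$ powerful; powerfulness enters only in Theorem~\ref{thm:powerful}, where it serves to verify that $G'\cap\Omega_1(G)$ is abelian.
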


\begin{proof}
Suppose that $V$ is a faithful $\f G$-module with no quadratic elements in
$\Omega_1(Z(G))$.
We must show that there are no quadratic offenders. So suppose that
the elementary abelian subgroup $E \leq G$ is a quadratic offender.
Then $[G',E]\neq 1$ by \cite[Theorem 1.5(2)]{newol}\@.
Moreover, $G'E\trianglelefteq G$. Hence $G'E\cap\Omega_1(G) \trianglelefteq G$.
By the Dedekind Lemma \cite[X.3]{Isaacs}, we have
$G'E\cap\Omega_1(G) = E (G'\cap\Omega_1(G))$, since $E\le\Omega_1(G)$.
Therefore $Z(E (G' \cap \Omega_1(G)))$ is an abelian normal subgroup of~$G$, and so
by \cite[Theorem 1.5(1)]{newol} it cannot contain~$E$. This means that
$G'\cap\Omega_1(G)$ is not centralized by $E$. So there must be an $a \in E$ such
that $[G'\cap\Omega_1(G),a] \neq 1$. But then $a \in \Omega_1(G)$, and so
$N := G' \langle a \rangle \cap\Omega_1(G)= \langle a \rangle (G'\cap\Omega_1(G))$
is a normal subgroup of $G$. Since $G' \cap \Omega_1(G)$ is abelian, a well-known
result (quoted as \cite[Lemma~6.1]{newol}) says that the commutator subgroup of
$N = \langle a \rangle (G'\cap\Omega_1(G))$ consists of commutators
in~$a$. But $N \trianglelefteq G$, and by choice of~$a$ we have $N' \neq 1$. Hence
$N'$~has nontrivial intersection with $\Omega_1(Z(G))$. So $1 \neq [a,x] \in \Omega_1(Z(G))$
for some $x \in G'\cap\Omega_1(G)$. Since $a$ is quadratic, Lemma~\ref{lemma:GHL}
says that $[a,x] \in \Omega_1(Z(G))$ is too.
This contradicts our assumption that there are no quadratic elements in $\Omega_1(Z(G))$.
So $G$ satisfies the conjecture.
\end{proof}

\begin{theorem}
\label{thm:powerful}
Let $p$ be an odd prime.
Every powerful $p$-group~$G$ satisfies Conjecture~\ref{conj:Y2}\@.
\end{theorem}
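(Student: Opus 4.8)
The plan is to reduce the theorem to Proposition~\ref{prop:prePowerful}, whose hypothesis is that $G' \cap \Omega_1(G)$ is abelian. So the entire task becomes: show that for an odd prime~$p$ and a powerful $p$-group~$G$, the subgroup $G' \cap \Omega_1(G)$ is abelian. Once this is established, the conclusion that $G$ satisfies Conjecture~\ref{conj:Y2} is immediate from the proposition.

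First I would recall the key structural facts about powerful $p$-groups from Wilson's paper~\cite{WilsonL:powerStructure}. The most useful one is that in a powerful $p$-group (for $p$ odd) the set $\Omega_1(G) = \{g \in G \mid g^p = 1\}$ is itself a subgroup, and moreover $\Omega_1(G)$ has exponent~$p$. In a group of exponent~$p$ one has good control of commutators, but exponent~$p$ alone does not force abelianness. The cleaner route is to use the fact that the derived subgroup $G'$ of a powerful group is again powerful, together with the characteristic property that powerful $p$-groups behave like abelian groups with respect to power and commutator operations. The cleanest statement to invoke is that $\Omega_1$ of a powerful $p$-group is abelian: since $\Omega_1(G)$ is a subgroup of exponent~$p$ and, being $\Omega_1$ of a powerful group, is itself abelian (this is one of Wilson's results). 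Granting that $\Omega_1(G)$ is abelian, the subgroup $G' \cap \Omega_1(G)$ is a subgroup of the abelian group $\Omega_1(G)$, hence abelian, and we are done.

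The main obstacle I anticipate is locating and correctly citing the precise statement that $\Omega_1(G)$ is abelian (or that $G'\cap\Omega_1(G)$ is abelian) for powerful $p$-groups, rather than merely that it is a subgroup. The subgroup property of $\Omega_1(G)$ and its exponent-$p$ property are standard and appear explicitly in~\cite{WilsonL:powerStructure}; the abelianness needs a slightly more careful appeal. If a direct citation is unavailable, I would argue it via powerful-embedding: $G' \cap \Omega_1(G)$ lies in $\Omega_1(G)$, and one checks that $\Omega_1(G)$ in a powerful $p$-group is itself powerful of exponent~$p$, hence necessarily abelian (a powerful $p$-group of exponent~$p$ has trivial derived subgroup, since $G' \leq G^p = 1$). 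This last observation is in fact the cleanest argument: if $\Omega_1(G)$ were powerful of exponent~$p$, then its commutator subgroup lies in its $p$-th power subgroup, which is trivial, forcing it to be abelian.

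So the proof reduces to the single lemma that $\Omega_1(G)$ is a powerful subgroup of exponent~$p$, which I would extract from Wilson's structure theory, after which abelianness of $\Omega_1(G)$—and hence of the smaller subgroup $G' \cap \Omega_1(G)$—follows from the defining inclusion $\Omega_1(G)' \leq \Omega_1(G)^p = 1$. Applying Proposition~\ref{prop:prePowerful} then completes the proof. I expect the whole argument to be short, with the only real content being the citation or verification that $\Omega_1(G)$ is powerful of exponent~$p$.
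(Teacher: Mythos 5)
Your reduction to Proposition~\ref{prop:prePowerful} is exactly the paper's strategy, but the way you propose to verify its hypothesis contains a genuine error. You claim that for a powerful $p$-group $G$ (with $p$ odd) the subgroup $\Omega_1(G)$ is itself powerful, hence --- having exponent~$p$ --- abelian. This is false, and it is not a result in Wilson's paper~\cite{WilsonL:powerStructure}. A counterexample: let $E$ be extraspecial of order $p^3$ and exponent~$p$, with centre $\langle z \rangle$, and let $G = E * C_{p^2}$ be the central product identifying $z$ with $a^p$, where $a$ generates the $C_{p^2}$. Then $G' = \langle z \rangle = \langle a^p \rangle \leq G^p$, so $G$ is powerful of order $p^4$; but $\Omega_1(G) = E$ is nonabelian. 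So the step ``$\Omega_1(G)$ is powerful of exponent $p$, hence abelian'' cannot be carried out, and your ``one checks that $\Omega_1(G)$ is powerful'' is precisely the gap.

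The paper circumvents this by never working with $\Omega_1(G)$ alone: it applies Wilson's Theorem~4.7 to get that $G^p$ is powerful, then his Corollary~4.11 to get that $\Omega_1(G^p)$ is powerful, and his Theorem~3.1 (exponent of $\Omega_1(G)$ is $p$) to identify $\Omega_1(G^p)$ with $\Omega_1(G) \cap G^p$. A powerful group of exponent~$p$ is abelian --- that part of your argument is fine --- so $\Omega_1(G) \cap G^p$ is abelian, and since $G' \leq G^p$ by the definition of powerful, the smaller subgroup $G' \cap \Omega_1(G)$ is abelian, which is what Proposition~\ref{prop:prePowerful} requires. (Note that in the example above $\Omega_1(G) \cap G^p = \langle z \rangle$ is indeed abelian even though $\Omega_1(G)$ is not.) To repair your proof you must replace the false claim about $\Omega_1(G)$ by this argument about $\Omega_1(G^p)$, i.e.\ intersect with $G^p$ before invoking powerfulness.
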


\begin{proof}
We refer to Wilson's paper~\cite{WilsonL:powerStructure}\@. His
Theorem 4.7 says that $G^p$ is powerful, and hence his Corollary 4.11
applied with $P = G$ shows that $\Omega_1(G^p)$ is powerful.
On the other hand, his Theorem 3.1 says that $\Omega_1(G)$~has exponent~$p$,
and so
$\Omega_1(G^p) = \Omega_1(G) \cap G^p$. So $\Omega_1(G) \cap G^p$ is powerful and of
exponent~$p$, which means it must be abelian. As $G' \leq G^p$, it follows that
$\Omega_1(G) \cap G'$ is abelian. So $G$ satisfies the conjecture, by
Proposition~\ref{prop:prePowerful} above.
\end{proof}

\subsection*{Acknowledgements}
We are grateful to Burkhard K\"ulshammer for suggesting that our methods
might also apply to $\GL_n(\f[q])$ in the coprime characteristic case.

We acknowledge travel funding from the Hungarian Scientific Research Fund
OTKA, grant 77-476\@.


\end{document}